\newtheorem{theorem}{Theorem}[section]
\newtheorem{corollary}{Corollary}
\theoremstyle{definition}
\newtheorem{remark}{Remark}
\newcommand{\fracj}{\displaystyle \frac}
\newcommand{\limj}{\displaystyle \lim}
\newcommand{\supj}{\displaystyle \sup}
\newcommand{\intj}{\displaystyle \int}
\newcommand{\minj}{\displaystyle \min}
\newcommand{\beq}{\begin{equation}}
\newcommand{\eeq}{\end{equation}}
\def\a#1{{\mathbb #1}}
\title{ About penalty-duality methods in fluid-structure interactions}
\author{Philippe Destuynder$^*$ and Erwan Liberge$^*$}
\begin{document}
\maketitle

% Enter the first author's name and address:

%\medskip
{\footnotesize
% please put the address of the first author
 \centerline{* Laboratoire LaSIE}
   \centerline{Universit\'e de La Rochelle}
   \centerline{ Avenue Michel Cr\'epeau,  17042 La Rochelle France}
   \centerline{ philippe.destuynder@univ-lr.fr,\;\;\;erwan.liberge@univ-lr.fr}
} % Do not forget to end the {\footnotesize by the sign }

%\medskip

% The name of the associate editor will be entered by an editorial staff
% "Communicated by the associate editor name" is not needed for special issue.
% \centerline{(Communicated by the associate editor name)}

%The abstract of your paper
\begin{abstract} In  fluid-structure interaction problems, some people use a penalty method for  positioning the structure inside the fluid. This is usually performed by considering that the fluid is very stiff or/and very heavy at the place occupied by the structure. These methods are very convenient for the programming point of view but lead to ill conditioned operators. This is a  drawback in the numerical solution methods. In particular the  forces applied to the structure by the surrounding flow are not accurately estimated because the penalty parameter -which is a very large number- appears in their expressions. We suggest in this paper a mathematical analysis of the difficulties encountered and we discuss how the penalty-duality method of D. Bertsekas can be an interesting alternative to overcome them. 
\end{abstract}
\section{Introduction} Let us consider -for sake of brevity- a two dimensional bounded and connected open set denoted by $\Omega$ with  boundary $\Gamma$. The unit normal to $\Gamma$ outward $\Omega$  is denoted by $\nu$. Inside $\Omega$ there is an open subset $S$ with  boundary $\partial S$. The complementary open set of $\overline S$ in $\Omega$ is $\Omega_S=\Omega\setminus {\overline S}$. The set $\overline S$ is occupied by a structure that we assume to be rigid or/and heavy for sake of simplicity. But there is no difficulty for extending our discussion to a flexible structure. The open set $\Omega_S$ is occupied by a viscous fluid that we assume to be incompressible in order to close the set of equations modeling the flow. The open set $S$ is moving with the displacement of the structure. Hence a new discretization of $\Omega_S$ would be necessary in case where one chooses to model separately the two media. Furthermore the transport of the physical field requires some special transformation as a local (or global) Euler-Lagrange parametrization. In addition the approximation  and the transport of the forces  (or stresses) interacting between the two media is a tough problem in terms of precision. \\
\begin{figure}[!htbp]
\center
\begin{tikzpicture}
\draw [white] (-4,0) grid (5,2.5);
\begin{scope}[scale=0.5]

\draw  (-3,0.6) .. controls +(1,0) and +(-1,0) .. (0,1.8) 
             .. controls +(1,0) and +(0,-3) .. (5,3.2)
             .. controls +(0,2) and +(2,0)  .. (0,5.2)
             .. controls +(-1,0) and +(0,3) .. (-4.5,2.2)
             .. controls +(0,-1) and +(-1,0).. (-3,0.6);     
  
\draw [fill,color=gray] (-0.5,3) .. controls +(0,1) and +(-1,.) .. (1,4)
							..  controls +(1,0) and +(0,1) .. (2.5,3)
					        ..  controls +(-1,0) and +(1,0) .. (1,2)
					        ..  controls +(-1,0) and +(0,-1) .. (-0.5,3);
\draw [-latex] (4.9,4) -- (6,4.4);
%\draw [-latex] (2,2) -- (2,3);	
\node [right] at (5,3.9) {$\nu$}	;				        
\node at (-2,2.8) {$\Omega_s$}	;				        
\node at (1,3.5) {$S$}	;
\node at (7,2) {$\Gamma=\Gamma_0\cup\Gamma_1$}	;
\node at (2.2,2) {$\partial S$};
\draw [-latex] (1.8,2.7) -- (1.,3.1);
\node [left] at (1.7,2.4) {$\nu$}	;
\end{scope}
\end{tikzpicture}
\caption{Schematic description of fluid structure interaction problem}
\end{figure}
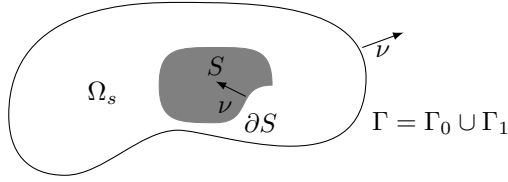

An alternative which has been introduced by several authors  \cite{penalty2} \cite{penalty3}, consists in considering that the two media obey to the same equilibrium laws but differ by different constitutive relationships. More precisely, the coefficients of the constitutive equation are very large  in order to prescribe a rigid body motion in the open set occupied by the structure. Another possibility consists in forcing the velocity field of the fluid to be equal to a rigid body one (the one of the structure) by adding a penalty term using the $L^2(S)$-norm \cite{liberge1} \cite{liberge2} \cite{liberge3}. The advantage of these approaches is to work on a single mesh (but requires some interpolations as far as the structure is moving on this mesh and its boundary has no reason to respect the vertices of the mesh. The second one is a very promising method for particles methods and lattice Boltzmann modeling \cite{liberge1} \cite{liberge2} \cite{liberge3}. Vortex methods are also well adapted to this strategy \cite{author8} \cite{author9} \cite{author10}. 

Several authors \cite{author4} \cite{author5} \cite{penalty1} have suggested to take into account the penalty term only on the boundary of the structure. But the question of the ill conditioning and the one of the mapping between the global mesh and this boundary should be handled in order to improve the strategy, mainly in a multiprocessor programming. Our purpose in this paper is only to discuss mathematically the penalty method with these different possibilities concerning the penalty term.  We choose to remain at the theoretical level in order to point out the mathematical aspects of the problems. Furthermore,  our analysis is performed for a given position of the structure and with a given rigid body velocity of it. In a dynamical model one could consider that it is the case when the classical algorithm (transport prediction-diffusion correction) is used (see P.A. Raviart and V. Girault \cite{PARVG}. Concerning the movement of the structure it is assumed to be sufficiently slow in order to adopt an explicit scheme for its displacements (small reduced frequencies approximation \cite{FUNG}-\cite{PhD1}). 

Let us first state few notations used in the following.
The velocity of the fluid particles is denoted by ${\bold u}=\{u_i\},\;i=1,2$ (the extension to 3D is not a difficulty for the theoretical approach but a real one for the implementation). The strain ratio is $\gamma({\bold u})=\{\gamma_{ij}({\bold u})=\frac{1}{2}(\partial_iu_j+\partial_ju_i)\}$ and the stress ratio is ($\hbox{div}({\bold u})=0$): $$\sigma_{ij}=2\mu\gamma_{ij}({\bold u}),$$ $\mu$ being the viscosity of the fluid. The unit normal to $\partial S$ inward $S$ is also denoted by $\nu$ and the movement of the solid is represented by its rigid body velocity (${\bold k}$ is the unit vector normal to the plan containing $\Omega$; $r$ is the velocity rotation around  ${\bold k}$ and ${\bold a}$ the velocity of point $O$). Hence the the velocity of the structure at point ${\bold x}$ is:
 \beq\label{e1}{\bold d}=\{d_i\}={\bold a}+r{\bold k}\wedge \bold {Ox}=\left\{ \begin{array}{l}a_1-rx_2\\\\a_2+rx_1\end{array}\right \},\;\;{\bold x}=(x_1,x_2)\;\hbox{ and }{\bold a}=(a_1,a_2).\eeq

 The kinematical continuity between the fluid and the structure is traduced by the following relation:
 \beq\label{eq2} {\bold u}={\bold a}+r{\bold k}\wedge\bold {O x}\hbox{ on }\partial S.\eeq

 The flow model inside $\Omega_S$ consists in finding ${\bold u}$ such that ($\Gamma_0$ and $\Gamma_1$ being the two distinct components of $\Gamma$ where respectively the flow velocity -respectively the normal stress- is given and $p$ is the static pressure):
 \beq\label{eq3}\left\{ \begin{array}{l}
\sigma({\bold u})=\mu \gamma({\bold u}),\;\bold {div}(\sigma)-\bold {grad}(p)=0\;\hbox{ in }\;\Omega	_S,\\\\
{\bold {div}}({\bold u})=0\;\hbox{ in }\;\Omega	_S,\\\\
 {\bold u}={\bold u}_0\hbox{ on }\;\Gamma_0\subset \Gamma,\;\sigma.\nu-p\nu={\bold g}\hbox{ on }\Gamma_1\subset \Gamma,\\\\
 \hbox{ and finally the condition stated in equation (\ref{eq2}) on $\partial S$.}
 \end{array}
 \right.
 \eeq

This a quasi-static flow model (following the terminology of aeroelasticity and hydroelasticity \cite{FUNG}-\cite{PhD1}) because the inertia forces aren't taken into account in the flow. 
 
 The equations of the model for the structure are the following ones (let us notice that: $J_O=J_G+M\vert\vert \bold{OG}\vert\vert^2$ where $J_G$ is the inertia around the center of mass $G$ and $J_O$ the one around the point $O$ and finally the dot stands for the scalar product or more generally for the matrix product):
 \beq\label{eq4}\left \{\begin{array}{l}M\dot {\bold a}+M\dot r {\bold k}\wedge {\bold {OG}}+Mr {\bold k}\wedge {\bold a}-Mr^2 \bold{ OG}=\intj_{\partial S}p \nu-\sigma.\nu,\\\\
J_O\dot r+M(\dot {\bold a},{\bold k},\bold {OG})+Mr({\bold a}.\bold {OG})+2Mr({\bold k}\wedge \bold{OG})(\bold {k}\wedge \bold{a})\\\\\hskip 4cm=\intj_{\partial S}(p \nu-\sigma.\nu)\wedge \bold{Ox},\\\\
 + \hbox{initial conditions on ${\bold a}$ and $r$.}
 \end{array}\right.
 \eeq
 \begin{remark}\label{rem1}In fact, we have considered in the previous modeling, that the movement of the structure is quite slow compared to the velocity necessary for establishing the permanent flow. In other words, it is the quasic-static approximation corresponding to low reduce frequencies \cite{FUNG}-\cite{PhD1}. Therefore the problem to be solved here is a simple first order differential equation in ${\bold a}$ and $r$ coupled with a static linear partial differential equation modeling the flow around the moving structure. Therefore we focus on the solution method for this last problem explicited at equation (\ref{eq3}) assuming that $\partial S$, ${\bold u}_0$ and ${\bold g}$ are given.
 \hfill $\Box$ \end{remark}

\section{The initial formulation for the flow} First of all let us introduce the problem (\ref{eq3}) as an optimization one. We introduce the functional space for the velocity field of the fluid:
\beq\label{eq5}V_0=\{{\bold v}=\{v_i\},\;i=1,2,\;v_i\in H^1(\Omega_S),\;\bold {div}({\bold v})=0\;\hbox{ in }\Omega_S,\;{\bold v}=0\;\hbox{on}\;\Gamma_0\}. \eeq
Then we set ($\sigma.\gamma=\sum_{i,j\in\{1,2\}}\sigma_{ij}\gamma_{ij}$):
\beq\label{eq6}\forall {\bold v}\in V_0,\;J({\bold v})=\fracj{1}{2}\intj_{\Omega_S}\sigma({\bold v}).\gamma({\bold v})-\intj_{\Gamma_1}{\bold g}.{\bold v}\eeq
It is classical that  equations (\ref{eq3}) are equivalent to:
\beq\label{eq7}
\min_{{\bold v}\in V_0\;{\bold v}={\bold a}+r {\bold k}\wedge {\bold Ox}\;\hbox{ on }\partial S} J({\bold v}).
\eeq

Existence and uniqueness of the solution -say ${\bold u}$- are also well known (see for instance \cite{PARVG}). 
\begin{remark}\label{rem2} The most popular method for solving (\ref{eq3}) is based on the mixed formulation where both the velocity and the pressure are unknowns. The results are very good and an extension with the acceleration term: 
$$\displaystyle \varrho  \left[ \fracj{\partial {\bold u}}{\partial t}+ {\bold u}\nabla {\bold u}\right],$$
 works perfectly. But, this formulation is restricted to incompressible (or slightly compressible) flows. The details are given for instance in \cite{PARVG}.
\hfill $\Box$
\end{remark}
\section{The extension to the full domain using penalty}\label{penalty} 
\subsection{The definitions of the penalty models}
The velocity ${\bold u}_0$ on the structure is defined by a rigid  body description. Such velocity fields satisfy the relation:
\beq\label{eq8}\gamma_{ij}({\bold u}_0)=0\;\;\forall i,j\in \{1,2\}.\eeq
The idea of the penalty method in this case, consists in replacing problem (\ref{eq7}) by  the next one ($\alpha\geq 0,\;\beta\geq 0,\;\gamma\geq 0\;\varepsilon >0$):
\beq\label{eq9}\hskip-.3cm\left\{\begin{array}{l}\;\;\;\;\;\;\;\;\;\;\;\;\;\;\;\;\;\;\;\;\;\;\;\;\;\;\;\;\;\;\;\;\;\;\;\;\;\;\;\minj_{{\bold v}\in W_0}J^\varepsilon({\bold v})\\\\\hbox{where:}\\\\
\hskip-.2cmJ^\varepsilon({\bold v})\hskip-.05cm=\displaystyle \hskip-.05cm\fracj{1}{2} \left[ \hskip-.05cm\intj_{\Omega}\hskip-.1cm\sigma({\bold v}).\gamma({\bold v})\hskip-.05cm+\hskip-.05cm\fracj{\alpha}{\varepsilon}\hskip-.05cm\intj_{\partial S}\hskip-.15cm\vert\vert {\bold v}\hskip-.05cm-\hskip-.05cm{\bold u}_0\vert\vert^2\hskip-.05cm+\hskip-.05cm\fracj{\beta}{\varepsilon}\hskip-.05cm\intj_{S}\hskip-.05cm\sigma({\bold v}){.\gamma({\bold v})}\hskip-.05cm+\hskip-.05cm\fracj{\gamma}{\varepsilon}\hskip-.05cm\intj_{S}\hskip-.05cm\vert\vert {\bold v}\hskip-.05cm-\hskip-.05cm{\bold u}_0\vert\vert^2 \right]
\\\\
\hbox{and:}\\\\
W_0=\left\lbrace {\bold v}=\{v_i\},\;i=1,2,\;v_i\in H^1(\Omega),\;\bold {div}({\bold v})=0\hbox{ on }\Omega,\;{\bold v}=0\hbox{ on } \Gamma_0 \right\rbrace .
\end{array}\right.
\eeq

Here again the existence and uniqueness of a solution are standard. The solution of (\ref{eq9}) is a function of $\alpha,\;\beta,\;\gamma$ and mainly of $\varepsilon$. Our purpose in this section, is to analyze the behaviour of this solution denoted by $\bold {u}^\varepsilon$ for several choices of $\alpha,\beta$ and $\gamma$, when $\varepsilon$  tends to zero. Let us  make explicit the variational formulation of (\ref{eq9}). The solution ${\bold u}^\varepsilon$ is characterized by:
\beq\label{eq3bis}\left \{
\begin{array}{l}
{\bold u}^\varepsilon \in W_0\;\hbox{such that }\forall {\bold v}\in W_0:\\\\
\intj_{\Omega}\sigma({\bold u}^\varepsilon).\gamma({\bold v})+\fracj{\alpha}{\varepsilon}\intj_{\partial S}({\bold u}^\varepsilon-{\bold u}_0).\bold{v}+\fracj{\beta}{\varepsilon}\intj_S\sigma({\bold u}^\varepsilon).\gamma({\bold v})\\ \\ \hspace{5cm}+\fracj{\gamma}{\varepsilon}\intj_{ S}({\bold u}^\varepsilon-{\bold u}_0).\bold{v}=\intj_{\Gamma_1} {\bold g}.{\bold v}
\end{array}\right.
\eeq
\begin{remark}\label{rem3}
In the papers by Benamour-Liberge-B\'eghein \cite{liberge1} \cite{liberge2} \cite{liberge3}, the authors set $\alpha=0$ and also $\beta=0$ but $\gamma\neq 0$. This is a very convenient strategy because one can use the mass matrix which is required for the dynamic analysis (the mass density for this new term is artificially equal to $\gamma/\varepsilon$ in $S$). The solution ${\bold u}^\varepsilon$ and also the normal stress associated should be continuous across the boundary $\partial S$ and this last condition (on the normal stresses)  is not well satisfied localy for $\varepsilon\simeq0$.  The reason is that the bilinear form involved in the limit model for $\alpha=\beta=0$, which couples ${\bold u}^0$ on $\Omega_S$  and ${\bold u}^1$ on $S$,  is only coercive on the space:
\beq\label{eq10}W_\gamma=\{{\bold v}\hbox{ s.t.}\; \hbox{div}({\bold v})=0\hbox{ in }\Omega\hbox{ and }{\bold v}_{\vert \Omega}\in {\Huge [}H^1(\Omega_S){\Huge ]}^2,\;{\bold v}_{\vert S}\in {\Huge [}L^2(S){\Huge]}^2\}.\eeq
It is a larger space than $W_0$ and such that only the continuity of the normal velocity ${\bold v}.\nu$ can be verified in the space $H^{-1/2}(\partial S)$ using the trace Theorem (see for instance \cite{JLLEM}) which is a weak result, mainly concerning the evaluation of the stress due to the fluid and applied to the structure.  Furthermore, the tangential component of ${\bold v}_{\vert S}$ on $\partial S$ doesn't exist in such a space. Nevertheless this strategy is very tricky for the numerical implementation and can lead to nice and cheap evaluation of the velocity field. Therefore we discuss it in the following in order to check which kind of information can be lost  concerning the tangential component of the normal stress along $\partial S$. 

If $\beta\neq 0$ whatever would be $\alpha$ and $\gamma$, one can can ensure the continuity of the two components of the velocity field ${\bold u}^\varepsilon$ solution of (\ref{eq9}) when $\varepsilon\rightarrow 0$. Hence only the rigid body motion of the open set $S$ is taken into account. Let us point out that if $\alpha=\gamma=0$ nothing guarantees that it will be the one prescribed by the equations of the mechanics. In fact ${\bold u}_0$ has disappeared from the penalty model and this statement is propped by the numerical tests of section \ref{test1}. 

 A discussion is carried out at subsection \ref{asymptotic} in order to explain the drawbacks of this nice strategy which is nevertheless used  for convenience in the implementation in existing softwares by many of authors.
\hfill $\Box$
\end{remark}
\begin{remark}\label{rem4}
The implementation of the penalty term on $S$ for $\beta>0$ is easy to do as far it only consists in choosing a new expression for the viscosity coefficient of the fluid. We set $\mu$ in $\Omega_S$ and $\mu(1+\frac{\beta}{\varepsilon})$ in $S$.
\hfill $\Box$
\end{remark}
\begin{remark}\label{eq11}
In a numerical implementations, one needs to use  two mappings because the movement of the structure does respect the sides of the mesh used in the approximation of the fluid movement. One -say $R^h$-  maps the position of $\partial S$ on the mesh and the other -say $P^h$- maps the mesh of $\Omega$ on $\partial S$.  But  this embedding problem is fully and accurately solved in a practical formulation by C. Farhat and his coworkers \cite{Farhat1}-\cite{Farhat2}. A mathematical formulation of this arbitrary Euler-Lagrange parametrization is briefly discussed in  \cite{PHDEL2} using domain derivative tools for improving the normal stresses approximation between the fluid and the structure.
\hfill $\Box$
\end{remark}
\subsection{The asymptotic analysis}\label{asymptotic}
Let set {\it a priori}:
\beq\label{eq13}
{\bold u}^\varepsilon={\bold u}^0+\varepsilon {\bold u}^1+\ldots
\eeq

By introducing this expression into the variational formulation (\ref{eq3bis}) and by equating the terms of same power in $\varepsilon$ one obtains the following necessary conditions:
\beq\label{eq14}\left \{
\begin{array}{l}
\hbox{ i) terms of order }\varepsilon^{-1}:\;\forall {\bold v}\in W_0,\;(\hbox{ let us notice that $\sigma({\bold u}_0)=0$})\\\\
\alpha\intj_{\partial S}({\bold u}^0-{\bold u}_0).{\bold v}+\beta\intj_{S}\sigma({\bold u^0}).\gamma({\bold v})+\gamma\intj_{S}({\bold u}^0-{\bold u}_0).{\bold v}=0,
\\\\
\hbox{ ii) terms of order zero: } \forall {\bold v}\in W_0,\\\\
\intj_{\Omega} \sigma({\bold u^0}).\gamma({\bold v})+\alpha\intj_{\partial S} {\bold u}^1.{\bold v}+\beta\intj_{S}\sigma({\bold u^1}).\gamma({\bold v})+\gamma\intj_S{\bold u}^1.{\bold v}={\intj_{\Gamma_1} {\bold g}.{\bold v}},
\\\\
\\\\
\hbox{ iii) terms of order }\varepsilon:\;\forall {\bold v}\in W_0,\;\\\\
\intj_{\Omega} \sigma({\bold u^1}).\gamma({\bold v})+\alpha\intj_{\partial S} {\bold u}^2.{\bold v}+\beta\intj_{S}\sigma({\bold u^2}).\gamma({\bold v})+\gamma\intj_S{\bold u}^2.{\bold v}=0,\\\\
\ldots
\end{array}\right.
\eeq

Setting on $S$:
 $${\bold u}^0={\bold u}_0+{\bold u}^{00},\;{\bold u}^{00}\in W_0,$$
  with the condition ${\bold u}^{00}=0$ on $\partial S$, equation i) of (\ref{eq14}) implies that:
\beq\label{eq15}
\left \{\begin{array}{l} \hbox{ if }\alpha,\beta>0 \hbox{ but }\gamma\geq 0: \\\\\hskip 1cm =>{\bold u}^0={\bold u}_0\hbox { on }\partial S \hbox{ and }\gamma_{ij}({\bold u}^{00})=0\hbox{ in }S=>{\bold u}^{00}=0 \hbox{ on }S,\\\\
\hbox{ if }\beta=0 \hbox{ but }\gamma>0 \hbox{ one has for any $\alpha\geq0$}:\\\\\hskip 1cm=> {\bold u}^{00}=0 .\end{array}\right.
\eeq

Therefore ${\bold u}^0$ is a rigid body velocity on $S$ and equal to ${\bold u}_0$ as far as $\alpha$ or $\gamma$ is (are) strictly positive. But  it should be noticed that this last statement is valid only because $\alpha>0$ or $\gamma>0$. For $\alpha=\gamma=0$ one could only claim that ${\bold u}^0$ is a rigid body velocity on $S$ but not necessarily  equal to ${\bold u}_0$. At this step, the case $\alpha=\beta=0$ seems to be the most convenient, but the following  contradicts this first conclusion.

We now introduce a new functional space by:
\beq\label{eq16}
K_0=\{{\bold v}\in W_0,\;{\bold v}=0\hbox{ in }S\}.
\eeq
From equation ii) of (\ref{eq14}), one deduces that:
\beq\label{eq17}
\forall {\bold v}\in K_0,\;\intj_{\Omega_S}\sigma.({\bold u}^0).\gamma({\bold v})=\intj_{\Gamma_1}{\bold g}.{\bold v},\;\hbox{ and if $\alpha^2+\gamma^2>0$ }=>{\bold u}^0={\bold u}_0\hbox{ on }\partial S.
\eeq
The element ${\bold u}^0\in K_0$ is now perfectly defined and is identical on $\Omega_S$ to the initial solution ${\bold u}$ characterized at (\ref{eq3}).
\begin{remark}\label{rem5}
If $\alpha=\gamma=0$ and $\beta>0$ one has only:
\beq\label{eq17bis}\gamma_{ij} ({\bold u}^0)=0 \hbox{ in }S=>{\bold u}^0=a+r{\bold k}\wedge \bold {Ox}\hbox {  in }S.
\eeq
The term ${\bold u}^0$ belongs to the following closed subspace of $W_0$ defined by:
\beq\label{eq20}RBM=\{{\bold v}\in W_0,\;\gamma({\bold v})=0\hbox{ in }S\}.\eeq
Hence in this case ($\alpha=\gamma=0$) the term ${\bold u}^0\in RBM$ is the unique solution of:
\beq \label{eq17ters}\forall {\bold v}\in RBM,\;\intj_{\Omega}\sigma({\bold u}^0).\gamma({\bold v})=\intj_{\Gamma_1} {\bold g}.{\bold v}\;\;\;\;.
\eeq

But the rigid body motion on $S$ is derived from the solution ${\bold u}^0$ and is no more the one that has been prescribed from the movement of the structure. This is the fundamental difference that suggests to avoid the choice $\alpha=0$ or $\gamma=0$. In fact the choice $\beta=0$ is another possibility which will appear as a smart possibility in the discussion contained in the next remark \ref{rem9}, even if it has its own drawbacks concerning the regularity if the boundary $\partial S$ is not smooth enough as we mention in the following.
\hfill $\Box$
\end{remark}

\begin{remark}\label{rem9}
Let us now consider the case where $\alpha>0$ and $\beta=\gamma=0$. This is very interesting situation from the practical point of view, because we only have to manage the position of the boundary $\partial S$ inside the mesh which can be unchanged during the displacement of the structure (full eulerian representation of the fluid). Let us also underline that this method is just for the quasi-static case. If inertia terms are introduced in the fluid then it  doesn't work in this case. But even for quasi-static cases (no inertia terms),  this assertion could be false for a multiprocessor programming where each processor could be idling when waiting for the informations concerning this boundary $\partial S$. At the order  minus one in $\varepsilon$ in the asymptotic expansion, one only gets the condition ${\bold u}^0={\bold u}_0$ on $\partial S$. But at the order zero in $\varepsilon$ one has:

\beq\label{eq100}
\forall {\bold v}\in W_0,\;\intj_\Omega \sigma({\bold u}^0).\gamma({\bold v})+\alpha\intj_{\partial S}{\bold u}^1.{\bold v}=\intj_{\Gamma_1}{\bold g}.{\bold v}.
\eeq

Let us define three closed subspaces of $W_0$ by:
\beq\label{eq101}\left \{ \begin{array}{l}
W_{00}=\{ v\in W_0,\;{\bold v}=0\hbox{ on }\partial S\},\\\\
V_{00\Omega_S}=\{{\bold v}\in V_0\hbox{ and }{\bold v}=0\hbox{ on }\partial S,\;({\bold v}=0\hbox{ on } S)\};\\\\
V_{00S}= {\Huge[}H^1_0(S){\Huge ]}^2.
\end{array}\right.
\eeq
It is worth to notice that one can identify $W_{00}$ with the space $V_{00\Omega}\times V_{00S}$.

By restricting the virtual velocity ${\bold v}$ to the space $W_{00}$ (${\bold u}^0= {\bold u}_0$ (the rigid body velocity) on $\partial S$) one obtains two distinct sub-problems; one is set on $\Omega_S$ and the other on $S$. They are given  hereafter:
\beq\label{eq102}\left \{\begin{array}{l}
{\bold u}^0_{\vert \Omega_S} \in V_0,\forall {\bold v} \in V_{00\Omega_S}:\;
\intj_{\Omega_S}\sigma({\bold u}^0).\gamma({\bold v})=\intj_{\Gamma_1}{\bold g}.{\bold v},\;\;{\bold u}^0={\bold u}_0\hbox{ on }\partial S,\\\\
{\bold u}^0_{\vert S}\in V_{00S},\;
\forall {\bold v}\in V_{00S},\;\intj_S\sigma({\bold u}^0).\gamma({\bold v})=0,\;{\bold u}^0={\bold u}_0\hbox{ on }\partial S.
\end{array}\right.
\eeq

The system of equations (\ref{eq102}) leads to two independent problems for which the solutions are uniquely defined respectively on $\Omega_S$ and on $S$. The solution ${\bold u}^0$ on $\Omega_S$ is precisely the one of the initial model we started from (\ref{eq3}). The solution ${\bold u}^0$ on $S$ can easily be characterized. Because the solution is unique and because the rigid body ${\bold u}_0$ is a solution, it is the right one. Hence ${\bold u}^0$ is exactly the solution in the full domain $\Omega$ of the initial problem. 

Nevertheless, it should be underlined that in this case ($\beta=\gamma=0$ and $\alpha>0$) the penalty term is reduced to a boundary term implying the $L^2(\partial S)$ norm and therefore the continuous model is perfectly defined. The numerical aspects are easy to handle if we use a separate modeling of $\partial S$ (we do not need to divide the elements at the boundary between $S$ and $\Omega_S$). But  the condition number of the discretized model remains an important question in the solution method compared to the case where $\alpha>0$ and $\beta>0$.
\hfill $\Box$
\end{remark}
\begin{remark}\label{rem9} In this remark we discuss the case where $\alpha=\beta=0$ and $\gamma>0$. The first equation (\ref{eq14}) leads directly to ${\bold u}^0={\bold u}_{0}$ in $S$. Therefore, from the second equation (\ref{eq14}) ${\bold u}^0$ should connected to an element ${\bold u}^1$ such that:
\beq\label{eq131}\forall {\bold v}\in W_0,\;\intj_\Omega \sigma({\bold u}^0).\gamma({\bold v})+\gamma\intj_S{\bold u}^1.{\bold v}=\intj_{\Gamma_1}{\bold g}.{\bold v}.\eeq

Because  we proved that ${\bold u}^0={\bold u}_0$ in $S$, the characterization of ${\bold u}^0$ on $\Omega_S$ is obtained by solving the equation (which has a unique solution):
\beq\label{eq132}{\bold u}^0_{\vert \Omega_S} \in V_0,\forall {\bold v} \in V_{00\Omega_S},\;
\intj_{\Omega_S}\sigma({\bold u}^0).\gamma({\bold v})=\intj_{\Gamma_1}{\bold g}.{\bold v},\;\;{\bold u}^0={\bold u}_0\hbox{ on }\partial S. \eeq

Consequently,  the term ${\bold u}^1$ should satisfy in $S$, the relation:
\beq\label{eq133}{\bold u}^1=0,\eeq
which is not compatible with equation (\ref{eq131}) which leads to ($\nu $ is here the unit normal along $\partial S$ inward $S$):
\beq \label{eq134}\forall {\bold v}\in W_0,\;\intj_{\partial S}(\sigma({\bold u}^0).\nu).{\bold v}+\gamma\intj_S{\bold u}^1.{\bold v}=0.
\eeq

In fact it is impossible to compute ${\bold u}^1$ which would appear as a first order derivative at the origin of ${\bold u}^\varepsilon$ with respect to $\varepsilon$. This phenomenon is due to a stiff boundary layer near $\partial S$ which is fully analyzed in the linear case in the book of J.L. Lions on singular perturbations \cite{Lionssingular}. Using interpolation technics between Hilbert spaces, he proved that one could only hope a fractional derivative order  and furthermore in a larger space than the energy one. This result is extended to Navier-Stokes equations in the paper by P. Angot \& all \cite{penalty2}. Nevertheless the convergence of ${\bold u}^\varepsilon$ to ${\bold u}^0$ is true in energy but for the weak topology  for $\alpha=\beta=0$ and $\gamma>0$ as we proved at Theorem \ref{th0} at the end of this remark. But the speed of convergence in $\varepsilon$ is not obvious and furthermore in a larger space than the energy one. This justifies to use this penalty method in this framework when the programming advantages are clear (multiprocessor programming for instance). A numerical discussion concerning  these theoretical results on a very simple model is suggested in section \ref{test1}.
\hfill $\Box$
\end{remark}
\begin{theorem}\label{th0} Let ${\bold u}^\varepsilon\in W_0$ be solution of (\ref{eq3bis}) and ${\bold u}^0$ be the solution of (\ref{eq132}) in $\Omega_S$ such that ${\bold u}^0={\bold u}_0$ in $S$. One has:
\beq\label{eq200}\limj_{\varepsilon\rightarrow 0}\vert\vert {\bold u}^\varepsilon-{\bold u}^0\vert\vert_{0,\Omega}=0\hbox{  and  }\vert\vert {\bold u}^\varepsilon-{\bold u}_0\vert\vert_{0,S}\leq c\sqrt{\varepsilon}.\eeq
\hfill $\Box$\end{theorem}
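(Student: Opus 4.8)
The plan is to test the penalty formulation (\ref{eq3bis}) (with $\alpha=\beta=0$, $\gamma>0$) against the difference $\mathbf{w}^\varepsilon:=\mathbf{u}^\varepsilon-\mathbf{u}^0$ and to exploit that $\mathbf{u}^0$ is a rigid body field inside $S$, so that the penalty contribution becomes a clean square. First I would check that $\mathbf{u}^0\in W_0$: it is divergence free on each side, its trace matches across $\partial S$ (by the interface condition in (\ref{eq132})) so that $\mathbf{u}^0\in[H^1(\Omega)]^2$, and it carries the same data as $\mathbf{u}^\varepsilon$ on $\Gamma_0$. Hence $\mathbf{w}^\varepsilon\in W_0$ is an admissible test function.

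Choosing $\mathbf{v}=\mathbf{w}^\varepsilon$ in (\ref{eq3bis}) and using $\mathbf{u}^0=\mathbf{u}_0$ on $S$, the penalty term collapses to $\fracj{\gamma}{\varepsilon}\|\mathbf{u}^\varepsilon-\mathbf{u}_0\|_{0,S}^2$. Since $\mathbf{u}_0$ is a rigid body velocity, $\sigma(\mathbf{u}_0)=0$, so $\sigma(\mathbf{u}^0)$ vanishes on $S$ and $\intj_\Omega\sigma(\mathbf{u}^0).\gamma(\mathbf{w}^\varepsilon)=\intj_{\Omega_S}\sigma(\mathbf{u}^0).\gamma(\mathbf{w}^\varepsilon)$. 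A generalized Green formula on $\Omega_S$, using the strong Stokes equations (\ref{eq3}) satisfied by $(\mathbf{u}^0,p^0)$ together with the condition $\sigma.\nu-p\nu=\mathbf{g}$ on $\Gamma_1$, then rewrites this as $\intj_{\Gamma_1}\mathbf{g}.\mathbf{w}^\varepsilon+\intj_{\partial S}\mathbf{t}^0.\mathbf{w}^\varepsilon$ with $\mathbf{t}^0=\sigma(\mathbf{u}^0)\nu-p^0\nu$. Subtracting from the tested equation yields the identity
\[
\intj_\Omega\sigma(\mathbf{w}^\varepsilon).\gamma(\mathbf{w}^\varepsilon)+\fracj{\gamma}{\varepsilon}\|\mathbf{u}^\varepsilon-\mathbf{u}_0\|_{0,S}^2=-\intj_{\partial S}\mathbf{t}^0.\mathbf{w}^\varepsilon .
\]
I would then bound the right-hand side by the $H^{-1/2}(\partial S)$--$H^{1/2}(\partial S)$ duality, the trace theorem, and Korn's inequality (legitimate since $\mathbf{w}^\varepsilon=0$ on $\Gamma_0$), giving $|\intj_{\partial S}\mathbf{t}^0.\mathbf{w}^\varepsilon|\le C\|\mathbf{t}^0\|_{-1/2,\partial S}\|\gamma(\mathbf{w}^\varepsilon)\|_{0,\Omega}$. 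A Young inequality absorbs the gradient term into the coercive left-hand side, leaving $c\|\gamma(\mathbf{w}^\varepsilon)\|_{0,\Omega}^2+\fracj{\gamma}{\varepsilon}\|\mathbf{u}^\varepsilon-\mathbf{u}_0\|_{0,S}^2\le C_0$, with $C_0$ depending only on the fixed limit $\mathbf{u}^0$. The second assertion $\|\mathbf{u}^\varepsilon-\mathbf{u}_0\|_{0,S}\le c\sqrt\varepsilon$ is then immediate, and the bound on $\|\gamma(\mathbf{w}^\varepsilon)\|_{0,\Omega}$ together with Korn shows $\mathbf{u}^\varepsilon$ is bounded in $W_0$ uniformly in $\varepsilon$.

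For the first assertion I would pass to the limit. By weak compactness a subsequence satisfies $\mathbf{u}^\varepsilon\rightharpoonup\mathbf{u}^*$ in $[H^1(\Omega)]^2$, hence by Rellich $\mathbf{u}^\varepsilon\rightarrow\mathbf{u}^*$ strongly in $[L^2(\Omega)]^2$. The estimate $\|\mathbf{u}^\varepsilon-\mathbf{u}_0\|_{0,S}\le c\sqrt\varepsilon$ forces $\mathbf{u}^*=\mathbf{u}_0$ in $S$ (thus on $\partial S$); testing (\ref{eq3bis}) with $\mathbf{v}\in K_0$ from (\ref{eq16}) makes the penalty term drop, and passing to the weak limit gives $\intj_{\Omega_S}\sigma(\mathbf{u}^*).\gamma(\mathbf{v})=\intj_{\Gamma_1}\mathbf{g}.\mathbf{v}$ for all such $\mathbf{v}$, i.e. $\mathbf{u}^*$ solves (\ref{eq132}). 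By uniqueness of that problem $\mathbf{u}^*=\mathbf{u}^0$, so the whole sequence converges and $\|\mathbf{u}^\varepsilon-\mathbf{u}^0\|_{0,\Omega}\rightarrow0$.

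The main obstacle is the Green-formula step: one must give a rigorous meaning to the interface traction $\mathbf{t}^0$. This is exactly the delicate point flagged in Remark \ref{rem3}, since in the limit energy space $W_\gamma$ of (\ref{eq10}) only the normal velocity $\mathbf{v}.\nu$ is controlled in $H^{-1/2}(\partial S)$; here the boundary pairing is still well posed for each $\varepsilon>0$ because $\mathbf{w}^\varepsilon\in[H^1(\Omega)]^2$, but one must verify $\mathbf{t}^0\in[H^{-1/2}(\partial S)]^2$ as the normal trace of the $L^2$ stress field $\sigma(\mathbf{u}^0)-p^0 I$ whose divergence is in $L^2(\Omega_S)$. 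I expect the rest (coercivity, Korn, weak/strong $L^2$ passage) to be routine, and note that the argument deliberately yields only weak energy convergence, consistent with the boundary-layer phenomenon near $\partial S$ discussed above.
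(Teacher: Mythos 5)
Your proof is correct, and it reproduces the paper's overall strategy: an $\varepsilon$-uniform energy estimate, extraction of a subsequence converging weakly in $[H^1(\Omega)]^2$, identification of the limit with ${\bold u}^0$ by testing (\ref{eq3bis}) against the space $K_0$ and invoking uniqueness of the limit problem (\ref{eq132}), and finally the compact embedding of $H^1(\Omega)$ into $L^2(\Omega)$ to upgrade to strong $L^2(\Omega)$ convergence. Where you genuinely differ is in how the key estimate is produced. The paper takes ${\bold v}={\bold u}^\varepsilon$, which leaves the term $\frac{\gamma}{\varepsilon}\intj_S{\bold u}_0.{\bold u}^\varepsilon$ on the right-hand side of (\ref{eq500}); Cauchy--Schwarz alone does not make that term $O(1)$, so the passage to the claimed $\varepsilon$-independent bound is not transparent as written. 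Your choice ${\bold v}={\bold u}^\varepsilon-{\bold u}^0$ (admissible, since you check that the glued field ${\bold u}^0$ belongs to $W_0$) turns the penalty contribution into the exact square $\frac{\gamma}{\varepsilon}\vert\vert{\bold u}^\varepsilon-{\bold u}_0\vert\vert_{0,S}^2$ and leaves only a remainder controlled by the fixed field ${\bold u}^0$, so both the uniform $H^1$ bound and the rate $\sqrt{\varepsilon}$ on $S$ come out cleanly; this is the more robust route. One simplification is available: the Green-formula step, which you rightly flag as the only delicate point, is unnecessary. The quantity $-\intj_{\Omega_S}\sigma({\bold u}^0).\gamma({\bold w}^\varepsilon)+\intj_{\Gamma_1}{\bold g}.{\bold w}^\varepsilon$ is already a linear form in ${\bold w}^\varepsilon$ that is continuous for the $H^1(\Omega)$ norm, with norm depending only on ${\bold u}^0$ and ${\bold g}$; Cauchy--Schwarz, the trace theorem on $\Gamma_1$, Korn's inequality and Young's inequality then give the same absorption without ever introducing the interface traction ${\bold t}^0$, hence without needing the pressure $p^0$ or the normal-trace theorem for $L^2$ tensor fields with $L^2$ divergence on $\partial S$. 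If you prefer to keep the integration by parts, your justification of ${\bold t}^0\in[H^{-1/2}(\partial S)]^2$ is the correct one, but it is an avoidable detour.
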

\begin{proof} Let set ${\bold v}={\bold u}^\varepsilon$ in equation (\ref{eq3bis}). Using Korn inequality \cite{duvautlions}, one can claim that there exists a constant independent on $\varepsilon$ -say $c_0>0$- such that:
\beq\label{eq500}c_0\vert\vert{\bold u}^\varepsilon\vert\vert^2_{1,\Omega}+\fracj{\gamma}{\varepsilon}\vert\vert {\bold u}^\varepsilon-{\bold u}_0\vert\vert^2_{0,S}\leq \fracj{\gamma}{\varepsilon}\intj_S{\bold u}_0{\bold u}^\varepsilon+\intj_{\Gamma_1}{\bold g}.{\bold u}^\varepsilon.\eeq
Using the triangular Cauchy-Schwarz\footnote{$\forall a,b,\zeta \geq 0,\;2 ab\leq \zeta a^2+\frac{1}{\zeta}b^2$} inequality and the continuity of the trace operator from $H^1(\Omega)$ into $L^2(\Gamma_1)$, on deduces the existence of another constant independent on $\varepsilon$ -say $c_1$- such that:
\beq\label{eq600}c_0\vert\vert {\bold u}^\varepsilon\vert\vert_{1,\Omega}^2+\fracj{\gamma}{\varepsilon}\vert\vert {\bold u}^\varepsilon-{\bold u}_0\vert\vert_{0,S}^2 \leq  c_1. \eeq
Therefore one can extract from the sequence ${\bold u}^\varepsilon$ a subsequence denoted by ${\bold u}^{\varepsilon'}$ and such that:
\beq\label{eq501}\left \{ \begin{array}{l}
i)\hbox{ in } S:\;\limj_{\varepsilon
\rightarrow 0} \vert\vert {\bold u}^{\varepsilon}-{\bold u}_0\vert\vert_{0,S}=0 \hbox{ (strong convergence with the rate }\sqrt{\varepsilon}),\\\\i)\;\hbox{ in }\Omega:\;\limj_{\varepsilon'
\rightarrow 0}{\bold u}^{\varepsilon'}={\bold u}^* \hbox{  in ${\Huge [}H^1(\Omega){\Huge ]}^2$-weakly; hence ${\bold u}^*={\bold u}_0$ in $S$}.\end{array}\right.\eeq
Taking the limit in (\ref{eq3bis}), one characterizes ${\bold u}^*$ as the unique solution of:
\beq\label{eq502}{\bold u}^*={\bold u}_0\hbox{ on }\partial S\hbox{ and }\forall {\bold v}\in K_0,\;\intj_{\Omega_S}\sigma({\bold u}^*).\gamma({\bold v})=\intj_{\Gamma_1}{\bold g}.{\bold v}.\eeq
In other words ${\bold u}^*={\bold u}^0$ in all $\Omega$. Because of the uniqueness of ${\bold u}^0$, all the sequence ${\bold u}^\varepsilon$ converges to ${\bold u}^0$ in $H^1(\Omega)$-weakly and strongly in $L^2(S)$. But the embeding from $H^1(\Omega)$ into $L^2(\Omega)$ is compact, hence the convergence of the sequence ${\bold u}^\varepsilon$ to ${\bold u}^0$ is strong in ${\Huge [}L^2(\Omega){\Huge ]}^2$.
\end{proof}

In the following of this section we assume that $\alpha>0$ and we start with $\beta>0$ in order to reduce singular perturbations problems in the vicinity of $\partial S$ which would damage the computation of the forces applied to the structure by the fluid. Let us go on with the relation ii) of equation (\ref{eq14}) but now for arbitrary element ${\bold v}$ in $W_0$. Because ${\bold u}^0$ is perfectly determined, it remains to find an element (not necessarily unique) ${\bold u}^1$ in $W_0$ such that (let us emphasize on the fact that the right-hand side is a linear and continuous form on $W_0$ without any additional regularity required on ${\bold u}^0$):
\beq\label{eq18}\forall {\bold v}\in W_0,\;\alpha\intj_{\partial S}{\bold u}^1.{\bold v}+\beta\intj_{S}\sigma({\bold u}^1).\gamma({\bold v})=-\intj_{\Omega_S}\sigma({\bold u}^0).\gamma({\bold v})+\intj_{\Gamma_1}{\bold g}.{\bold v}\;\;.\eeq
Or else from an integration by parts (the velocity fields are divergence free):
\beq\label{eq19}\forall {\bold v}\in W_0,\;\alpha\intj_{\partial S}{\bold u}^1.{\bold v}+\beta\intj_{S}\sigma({\bold u}^1).\gamma({\bold v})=-\intj_{\partial S}(\sigma({\bold u}^0).\nu).{\bold v}\;\;.\eeq
Let us check that ${\bold u}^1$ is perfectly determined by the previous relation on $S$ (but not yet on $\Omega_S$).  We denote by ${\bold u}^{1p}$ this particular term defined on $S$. The bilinear form $a^1$ defined by:
\beq\label{eq20}
{\bold u},{\bold v}\in {\Huge [}H^1(S){\Huge ]}^2\rightarrow a^1({\bold u},{\bold v})=\alpha\intj_{\partial S}{\bold u}.{\bold v}+\beta\intj_{S}\sigma({\bold u}).\gamma({\bold v}),
\eeq
is ${\Huge [}H^1(S){\Huge ]}^2$-coercive (and symmetrical) and therefore, from Lax-Milgram Theorem,  one can claim that ${\bold u}^{1}$ is perfectly defined on $S$ as a solution of (\ref{eq19}) (using the expression given at (\ref{eq18}) the right-hand side is clearly a linear and continuous form on the functional space ${\Huge [}H^1(S){\Huge ]}^2$). It is denoted by ${\bold u}^{1p}$ and it also satisfies the incompressibility condition on $S$. 

In order to complete the definition of ${\bold u}^1$ on the whole open set $\Omega$, we go to the equations obtained at the order one. Choosing an arbitrary element ${\bold v}\in K_0$ (see (\ref{eq16})), one obtains (let us point out that the normal stress due to ${\bold u}^1$ is not continuous across $\partial S$):
\beq\label{eq21}\left\{\begin{array}{l}\intj_{\Omega_S}\sigma({\bold u}^1).\gamma({\bold v})=0,\\\\
{\bold u}^1={\bold u}^{1p}\hbox { on }\partial S \hbox{ and }{\bold u}^1_{\vert \Omega_S}\in V_0.
\end{array}\right.
\eeq
Finally, the term ${\bold u}^1$ is defined on the whole open set $\Omega$ and because of the continuity of the trace on $\partial S$, it belongs to the space ${\Huge [}H^1(\Omega){\Huge]}^2$. Its restriction to $S$ is ${\bold u}^{1p}$ characterized at (\ref{eq19}) and its restriction to $\Omega_S$ is solution of (\ref{eq21}). Globally ${\bold u}^1$ belongs to the space $W_0$ because it is divergence free on each open subset  of $\Omega$. It should be underlined that no additional regularity is required in the computation of ${\bold u}^1$. But this will be different when  $\beta=0$.

Let us finish this partial analysis of an assumed asymptotic expansion of ${\bold u}^\varepsilon$, by  a partial characterization of ${\bold u}^2$ on $S$ denoted by ${\bold u}^{2p}$. Once ${\bold u}^1$ is known one should have (order 1 in $\varepsilon$ and $[\vert.\vert ]$ is the jump function across $\partial S$ in the direction $\nu$):
\beq\label{eq21bis}\left \{\begin{array}{l}\forall 
 {\bold v}\in W_0,\\\\
\alpha\intj_{\partial S}{\bold u}^2.{\bold v}+\beta\intj_{S}\sigma({\bold u}^2).\gamma({\bold v})=-\intj_{\Omega}\sigma({\bold u}^1).\gamma({\bold v})=\intj_{\partial S}{[\vert}(\sigma({\bold u}^1){\vert ]}.\nu.{\bold v}.\end{array}\right.
\eeq
The restriction of ${\bold u}^2$ to $\Omega_S$ is denoted by ${\bold u}^{2p}$. The existence and uniqueness of this term ${\bold u}^{2p}\in {\Huge [}H^1(S){\Huge ]}^2,\,\hbox{div(${\bold u}^{2p}$)$=0$}$ on $S$, are obtained exactly  as we did for ${\bold u}^{1p}$. Then the computation of ${\bold u}^2$ on $\Omega_S$ is also handled as we did for ${\bold u}^1$. Therefore ${\bold u}^2$ is now defined on the whole domain $\Omega$ and is an element of the space $W_0$. Here again, no additional regularity is required for the computation of ${\bold u}^2$ but we shall have a different conclusion for $\beta=0$.
\begin{remark}\label{rem10}
If $\beta=\gamma=0$ and $\alpha>0$ the term ${\bold u}^1$ should be solution of (terms of order zero in the assumed asymptotic expansion in $\varepsilon$):
\beq\label{eq104}
\forall {\bold v}\in W_0,\;\intj_{\Omega} \sigma({\bold u}^0). \gamma({\bold v})+\alpha\intj_{\partial S}{\bold u}^1.{\bold v}=\intj_{\Gamma_1} {\bold g}.{\bold v}.
\eeq
This implies that necessarily one should have (here $\sigma({\bold u}^0).\nu$ is the value from $\Omega_S$ and it is zero for the contribution from $S$ because ${\bold u}^0$ is a rigid body velocity on $S$):
\beq\label{eq105}{\bold u}^1=-\sigma({\bold u}^0).\nu\;\hbox{ on }\partial S\;(\nu \hbox{ inward }S).\eeq

It is necessary to prove that the normal component of the stress field $\sigma({\bold u}^0).\nu$ is more regular than the classical result (see for instance J.L. Lions-E. Magenes \cite{JLLEM}) which states that it belongs to the space ${\Huge [}H^{-1/2}(\partial S){\Huge ]}^2$. In fact a hidden regularity result that one can derive from the J. Hadamard domain derivation method which has been widely used in mathematical control analysis \cite{lionscont}, enables one to prove that $\sigma({\bold u}^0).\nu\in {\Huge [}L^2(\partial S){\Huge ]}^2$. But this is insufficient because ${\bold u}^1$ should belong to the space ${\Huge [}H^{1/2}(\partial S){\Huge ]}^2$ in order to have ${\bold u}^1$ in the space ${\Huge [}H^1(S){\Huge ]}^2$. Therefore one can point out a weakness of this case which is the lost of some regularity which will have some nasty consequences on the convergence of the penalty method when $\varepsilon\rightarrow 0$. This is the case for instance if the boundary of the structure has sharp corners  (see P. Grisvard for details \cite{grisvard}). But the penalty-duality method will overcome this difficulty and could be seen as an improvement of this penalty method when $\alpha>0$ and $\beta=\gamma=0$  as we show in the following.

Let us now turn to the characterization of ${\bold u}^1$ inside $\Omega_S$ and $S$ (always with $\alpha>0$ and $\beta=\gamma=0$). Hence from the equation at the order one in $\varepsilon$, one obtains:
\beq\label{eq16}
\forall {\bold v}\in W_0,\;\intj_{\Omega}\sigma({\bold u}^1).\gamma({\bold v})+\alpha\intj_{\partial S}{\bold u}^2.{\bold v}=0\;\hbox{ and }{\bold u}^{1p}=-\sigma({\bold u}^0).\nu\;\hbox{ on }\partial S.
\eeq
Here again, as in Remark \ref{rem9}, the system (\ref{eq104}) can be split into two independent problems. which characterizes ${\bold u}^1$ separately on $\Omega_S$ and on $S$.
\beq\label{eq107}\left \{ \begin{array}{l}\hbox{ on } \Omega_S:
\\\\
\forall {\bold v}\in V_0,\;\intj_{\Omega_S}\sigma({\bold u}^1).\gamma({\bold v})=0,\;\;{\bold u}^1={\bold u}^{1p}\hbox{ on }\partial S,
\\\\
\hbox{ on } S:
\\\\
\forall {\bold v}\in {\Huge [}H^1_0(S){\bold ]}^2,\;\intj_{S}\sigma({\bold u}^1).\gamma({\bold v})=0\;\hbox{ and }{\bold u}^1={\bold u}^{1p}\hbox{ on }\partial S.
\end{array}\right.
\eeq
These two non-homogeneous boundary values problems (Dirichlet condition on $\partial S$) have each one a unique solution because of the classical properties of the linear elasticity operator. But in this case there are two important features which should be emphasized.
\begin{enumerate}
\item  The regularity of  the boundary of the immersed structure ($\partial S$) is required in order to ensure that $\sigma({\bold u}^0).\nu\in {\Huge [}H^{1/2}(\partial S){\Huge ]}^2.$ If it isn't one could make use singular functions due to the corner on $\partial S$ but this is beyond our scope in this paper.
\item The solution ${\bold u}^1$ on $S$ is no more a rigid body velocity (in general).
%g
\end{enumerate}
Nevertheless this method ($\alpha>0$ and $\beta=\gamma=0$) will be the most adapted to the penalty-duality method that we discuss in section \ref{penaltyduality} as far as one only considers quasi-static modeling.\hfill $\Box$
\end{remark}
%===========================
\subsection{Convergence results (case $\alpha>0\:\beta>0$ and $\gamma=0$).} We introduce the error term:
\beq\label{eq22}
\underline{\bold u}^\varepsilon={\bold u}^\varepsilon-{\bold u}^0-\varepsilon {\bold u}^1-\varepsilon^2{\bold u}^{2}\in W_0.
\eeq

From the definition of the different terms and from the triangular inequality\footnote{$\forall a,b\in \a R,\;\xi\in \a R^+\hbox{ one has }ab\leq \frac{2\xi}a^2+\frac{1}{2\xi}b^2$} and $c$ denoting the continuity constant of the bilinear form of the elasticity operator:
\beq\label{eq23}\hskip-.2cm\left \{\begin{array}{l}\forall \xi>0:\;\;
\intj_{\Omega}\hskip-.05cm\sigma(\underline{\bold u}^\varepsilon).\gamma(\underline{\bold u}^\varepsilon)+\fracj{\alpha}{\varepsilon}\intj_{\partial S}\hskip-.05cm\vert\vert \underline{\bold u}^\varepsilon\vert\vert^2+\fracj{\beta}{\varepsilon}\intj_{S}\hskip-.05cm\sigma(\underline{\bold u}^\varepsilon).\gamma(\underline{\bold u}^\varepsilon)=\\\\-\varepsilon^2\intj_{\Omega}\hskip-.05cm\sigma({\bold u}^2).\gamma(\underline{\bold u}^\varepsilon)
\leq c \displaystyle\left\lbrace  \fracj{\xi}{2}\varepsilon^4 \vert\vert {\bold u}^2\vert\vert_{1,\Omega}^2+\fracj{1}{2\xi}\vert\vert\underline {\bold u}^\varepsilon\vert\vert_{1,\Omega}^2 \right\rbrace . \end{array}\right. 
\eeq

Finally, from  classical coerciveness properties deduced from Korn inequality \cite{duvautlions}, one has the following error estimates where $c_0$ and $c_1$ are two positive constants independent on $\varepsilon$ (but dependent on the data of the problem):
\beq\label{eq24}
c_0\vert\vert \underline{\bold u}^\varepsilon\vert\vert_{1,\Omega}^2+\fracj{\alpha}{2 \varepsilon}\vert\vert \underline{\bold u}^\varepsilon\vert\vert_{0,\partial S}^2+\fracj{\beta}{2 \varepsilon}\vert\vert \underline{\bold u}^\varepsilon\vert\vert_{1,S}^2\leq c_1\varepsilon^4.
\eeq

Let us summarize the previous results in the following statement.
\begin{theorem}\label{th1}
Let ${\bold u}^\varepsilon$ be the solution of (\ref{eq3bis}) and ${\bold u}^0,{\bold u}^1$ the terms characterized in the asymptotic expansion at (\ref{eq19}) and (\ref{eq21}). We also assume in this Theorem that $\alpha>0$ and $\beta>0$. Then there exists a constant -say $c_2>0$- independent on $\varepsilon$ and such that:
\beq\label{eq25}\left \{\begin{array}{l}
\vert\vert {\bold u}^\varepsilon-{\bold u}^0-\varepsilon{\bold u}^1\vert\vert_{1,\Omega_S}\leq c_2\varepsilon^{2},\\\\
\vert\vert  {\bold u}^\varepsilon-{\bold u}^0-\varepsilon{\bold u}^1\vert\vert_{0,\partial S}\leq c_2 \varepsilon^{5/2},\;\;\vert\vert  {\bold u}^\varepsilon-{\bold u}^0-\varepsilon{\bold u}^1\vert\vert_{1,S}\leq c_2 \varepsilon ^{5/2}.
\end{array}\right.
\eeq

Furthermore the term ${\bold u}^0$ is exactly the solution of the model (\ref{eq3}) we started from where the displacement of the structure is the prescribed to the boundary $\partial S$ of the fluid.
\end{theorem}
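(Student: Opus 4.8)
The plan is to treat the three-term remainder $\underline{\bold u}^\varepsilon={\bold u}^\varepsilon-{\bold u}^0-\varepsilon{\bold u}^1-\varepsilon^2{\bold u}^2$ of (\ref{eq22}) as the residual of the assumed expansion and to exploit the $1/\varepsilon$ weights carried by the penalty terms. First I would substitute the ansatz ${\bold u}^\varepsilon={\bold u}^0+\varepsilon{\bold u}^1+\varepsilon^2{\bold u}^2+\underline{\bold u}^\varepsilon$ into (\ref{eq3bis}) with $\gamma=0$. Because ${\bold u}^0,{\bold u}^1,{\bold u}^2$ were constructed precisely to annihilate the coefficients of $\varepsilon^{-1}$, $\varepsilon^0$ and $\varepsilon^1$ (equations i), ii), iii) of (\ref{eq14})), and because ${\bold u}^0={\bold u}_0$ on $\partial S$ and $\sigma({\bold u}^0)=0$ in $S$ make the negative power vanish identically, the only surviving source is the order-$\varepsilon^2$ contribution of ${\bold u}^2$. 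This gives the residual variational identity: for all ${\bold v}\in W_0$,
\[
\intj_\Omega \sigma(\underline{\bold u}^\varepsilon).\gamma({\bold v}) + \fracj{\alpha}{\varepsilon}\intj_{\partial S}\underline{\bold u}^\varepsilon.{\bold v} + \fracj{\beta}{\varepsilon}\intj_S \sigma(\underline{\bold u}^\varepsilon).\gamma({\bold v}) = -\varepsilon^2\intj_\Omega \sigma({\bold u}^2).\gamma({\bold v}),
\]
which is exactly the relation underlying (\ref{eq23}).

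Next I would test with ${\bold v}=\underline{\bold u}^\varepsilon$. The right-hand side is controlled by Cauchy--Schwarz together with Young's inequality with a free parameter $\xi$, producing a bound of the form $c\{\frac{\xi}{2}\varepsilon^4\vert\vert{\bold u}^2\vert\vert_{1,\Omega}^2+\frac{1}{2\xi}\vert\vert\underline{\bold u}^\varepsilon\vert\vert_{1,\Omega}^2\}$. Korn's inequality bounds the symmetric-gradient form from below by $c_0\vert\vert\underline{\bold u}^\varepsilon\vert\vert_{1,\Omega}^2$, and choosing $\xi$ large enough to absorb the second term into the left-hand side yields the weighted estimate (\ref{eq24}),
\[
c_0\vert\vert\underline{\bold u}^\varepsilon\vert\vert_{1,\Omega}^2 + \fracj{\alpha}{2\varepsilon}\vert\vert\underline{\bold u}^\varepsilon\vert\vert_{0,\partial S}^2 + \fracj{\beta}{2\varepsilon}\vert\vert\underline{\bold u}^\varepsilon\vert\vert_{1,S}^2 \le c_1\varepsilon^4.
\]
From this the three rates fall out directly: the bulk term gives $\vert\vert\underline{\bold u}^\varepsilon\vert\vert_{1,\Omega}\le c\varepsilon^2$, while the $1/\varepsilon$ weights upgrade the two remaining contributions to $\vert\vert\underline{\bold u}^\varepsilon\vert\vert_{0,\partial S}\le c\varepsilon^{5/2}$ and $\vert\vert\underline{\bold u}^\varepsilon\vert\vert_{1,S}\le c\varepsilon^{5/2}$ -- this half-power gain on $\partial S$ and in $S$ is the whole purpose of the weighting. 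The stated two-term estimates then follow by the triangle inequality applied to ${\bold u}^\varepsilon-{\bold u}^0-\varepsilon{\bold u}^1=\underline{\bold u}^\varepsilon+\varepsilon^2{\bold u}^2$: on $\Omega_S$ the correction $\varepsilon^2{\bold u}^2$ leaves the rate at $c_2\varepsilon^2$, whereas the sharper $\varepsilon^{5/2}$ rates on $\partial S$ and in $S$ are those carried by $\underline{\bold u}^\varepsilon$ itself. Finally, the identification of ${\bold u}^0$ is immediate: by (\ref{eq17}) its restriction to $\Omega_S$ solves the weak form with the prescribed trace ${\bold u}^0={\bold u}_0$ on $\partial S$, which is exactly problem (\ref{eq3}), so by uniqueness ${\bold u}^0$ coincides with the solution we started from.

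The genuinely delicate part is not this energy estimate but the prerequisite that ${\bold u}^0,{\bold u}^1,{\bold u}^2$ exist in $W_0$ with enough regularity to make $-\varepsilon^2\intj_\Omega\sigma({\bold u}^2).\gamma(\cdot)$ a bounded linear functional of order exactly $\varepsilon^2$. This is where the hypothesis $\beta>0$ is essential: it renders the bilinear form $a^1$ coercive on $[H^1(S)]^2$, so that Lax--Milgram defines ${\bold u}^{1p}$ (and likewise ${\bold u}^{2p}$) with no extra regularity demanded of ${\bold u}^0$, and the companion elasticity problems on $\Omega_S$ then close the construction inside $W_0$. I expect the main obstacle to be this regularity bookkeeping across the three expansion terms -- the precise point at which the argument would break down for $\beta=0$, as flagged in Remark \ref{rem10} -- since once the residual equation and the membership ${\bold u}^2\in W_0$ are secured, the passage to (\ref{eq24}) and the extraction of the rates are routine.
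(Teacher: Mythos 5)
Your argument is essentially identical to the paper's: the same three-term remainder $\underline{\bold u}^\varepsilon$ of (\ref{eq22}), the same residual identity obtained from the cancellations in (\ref{eq14}) (this is exactly (\ref{eq23})), the same Young--Korn absorption leading to (\ref{eq24}), and the same extraction of the rates from the $1/\varepsilon$ weights followed by the triangle inequality and the identification of ${\bold u}^0$ through (\ref{eq17}). You also correctly locate the role of $\beta>0$ in securing ${\bold u}^{1p}$ and ${\bold u}^{2p}$ via Lax--Milgram without extra regularity on ${\bold u}^0$, which is precisely the point the paper makes before stating the theorem.
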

\begin{corollary}\label{corollaire1}
From Theorem \ref{th1} and making use of the triangular inequality, one obtains the following error bounds between ${\bold u}^\varepsilon$ and ${\bold u}^0$ where $c_3$ is a constant independent on $\varepsilon$:
\beq\label{eq17}
\left \{\begin{array}{l}
\vert\vert {\bold u}^\varepsilon-{\bold u}^0\vert\vert_{1,\Omega_S}\leq c_3\varepsilon,\\\\
\vert\vert  {\bold u}^\varepsilon-{\bold u}^0\vert\vert_{0,\partial S}\leq c_3 \varepsilon,\;\;\vert\vert  {\bold u}^\varepsilon-{\bold u}^0\vert\vert_{1,S}\leq c_3 \varepsilon.
\end{array}\right.
\eeq
\end{corollary}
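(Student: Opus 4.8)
The plan is to obtain Corollary \ref{corollaire1} directly from Theorem \ref{th1} by inserting and then removing the first-order corrector $\varepsilon{\bold u}^1$ through the triangular inequality, exploiting the fact that ${\bold u}^1$ is a fixed element of $W_0$ whose norms do not depend on $\varepsilon$. The whole argument is elementary; the only thing to make precise is that the three remainder estimates of Theorem \ref{th1} are of strictly higher order than $\varepsilon$ and hence get absorbed into a single linear-in-$\varepsilon$ bound.

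First I would write, for each of the three norms appearing in the statement,
$$\vert\vert {\bold u}^\varepsilon-{\bold u}^0\vert\vert \leq \vert\vert {\bold u}^\varepsilon-{\bold u}^0-\varepsilon{\bold u}^1\vert\vert + \varepsilon\,\vert\vert {\bold u}^1\vert\vert.$$
The first term on the right is exactly what Theorem \ref{th1} controls: it is bounded by $c_2\varepsilon^2$ in the $\vert\vert\cdot\vert\vert_{1,\Omega_S}$ norm and by $c_2\varepsilon^{5/2}$ in the $\vert\vert\cdot\vert\vert_{0,\partial S}$ and $\vert\vert\cdot\vert\vert_{1,S}$ norms. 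The second term is simply $\varepsilon$ multiplied by a constant, since ${\bold u}^1$ — characterized once and for all by (\ref{eq19}) and (\ref{eq21}) — belongs to $W_0$ and therefore has finite norms $\vert\vert {\bold u}^1\vert\vert_{1,\Omega_S}$, $\vert\vert {\bold u}^1\vert\vert_{0,\partial S}$ and $\vert\vert {\bold u}^1\vert\vert_{1,S}$, all independent of $\varepsilon$ (the trace on $\partial S$ being controlled by continuity of the trace operator from $H^1$).

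Next I would observe that, for $\varepsilon$ ranging over a bounded interval $(0,\varepsilon_0]$, the remainder contributions $c_2\varepsilon^2$ and $c_2\varepsilon^{5/2}$ are each dominated by a multiple of $\varepsilon$, since $\varepsilon^2\leq\varepsilon_0\,\varepsilon$ and $\varepsilon^{5/2}\leq\varepsilon_0^{3/2}\,\varepsilon$. Collecting the two contributions in each norm then produces a single linear bound, and taking $c_3$ to be the maximum over the three norms of $c_2\varepsilon_0+\vert\vert {\bold u}^1\vert\vert$ (respectively $c_2\varepsilon_0^{3/2}+\vert\vert {\bold u}^1\vert\vert$) yields all of (\ref{eq17}) with one constant $c_3$ independent of $\varepsilon$.

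There is no genuine obstacle here: the entire content is the triangular inequality together with the boundedness of the fixed corrector ${\bold u}^1$. The only point worth emphasizing is that the linear rate $\varepsilon$ in (\ref{eq17}) is not an artifact of loosening Theorem \ref{th1} but is sharp in general, because the dominant discrepancy between ${\bold u}^\varepsilon$ and ${\bold u}^0$ is precisely the first-order term $\varepsilon{\bold u}^1$; the quantity ${\bold u}^\varepsilon-{\bold u}^0-\varepsilon{\bold u}^1$ that Theorem \ref{th1} already controls is genuinely of higher order ($\varepsilon^2$, respectively $\varepsilon^{5/2}$) and so contributes only negligibly to the bounds above.
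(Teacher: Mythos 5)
Your argument is correct and is exactly the one the paper intends: the corollary is stated as a direct consequence of Theorem \ref{th1} via the triangular inequality, i.e.\ $\vert\vert {\bold u}^\varepsilon-{\bold u}^0\vert\vert\leq\vert\vert {\bold u}^\varepsilon-{\bold u}^0-\varepsilon{\bold u}^1\vert\vert+\varepsilon\vert\vert{\bold u}^1\vert\vert$, with the remainder of order $\varepsilon^2$ (resp.\ $\varepsilon^{5/2}$) absorbed into a linear bound and the fixed corrector ${\bold u}^1$ contributing the dominant $O(\varepsilon)$ term. The paper gives no further detail, so your write-up simply makes explicit what it leaves implicit.
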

\begin{remark}\label{rem7} In the eventuality where $\alpha=0$ (no penalty term on the boundary of $S$) the similar results as those of Theorem \ref{th1} are still true but ${\bold u}^0$ is the not the solution of the initial problem. Even if it is a rigid body velocity on $S$, it is not the one of the structure deduced from a correct writing of the principle of the mechanics. For instance the mass of the structure is not appearing in the model. Furthermore we restrict the analysis to quasi-static fluid-structure interaction (small reduce frequencies). Hence this possibility should be forgotten.
\hfill $\Box$
\end{remark}
\subsection{Convergence result for $\beta=\gamma=0$ and $\alpha>0$.} The results obtained in Theorem \ref{th1} are still true up to few modifications implying an additional regularity assumption on the term ${\bold u}^0$. We just give the statement without details because of the narrow similarity with the one of Theorem \ref{th1}. Similarly to the error bound derived for $\alpha>0$ and $\beta>0$, and as far as we just upper bound the error between ${\bold u}^\varepsilon$ and ${\bold u}^0$, it is not necessary to imply the term ${\bold u}^2$.   At the opposite of the case $\beta>0$, it would require also an additional regularity on $\sigma({\bold u}^1).\nu$ and therefore on ${\bold u}^0$ too.
\begin{theorem}\label{th2} The notations are those introduced in the previous subsection but ${\bold u}^0$ is now solution of (\ref{eq102}). Furthermore, it is assumed that the solution ${\bold u}^0$ of the equations (\ref{eq102}) satisfies $\sigma({\bold u}^0).\nu \in {\Huge [}H^{1/2}(\partial S){\Huge ]}^2$.   Then there exists a constant $c_4>0$ independent on $\varepsilon$ such that:
\beq\label{eq110}\left \{ \begin{array}{l}
\vert \vert{\bold u}^\varepsilon-{\bold u}^0\vert\vert_{1,\Omega}\leq c_4\varepsilon,\\\\
\vert\vert {\bold u}^\varepsilon-{\bold u}_0\vert\vert_{0,\partial S}\leq c_4\varepsilon.
\end{array}\right.
\eeq
\end{theorem}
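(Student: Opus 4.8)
The plan is to run the same energy argument as for Theorem~\ref{th1}, but to truncate the assumed expansion one order sooner, at ${\bold u}^0+\varepsilon{\bold u}^1$, because here only an $O(\varepsilon)$ gap between ${\bold u}^\varepsilon$ and ${\bold u}^0$ is claimed. First I would make sure that ${\bold u}^1\in W_0$ actually exists in the energy space. On $S$ it is fixed by the pure Dirichlet elasticity problem (\ref{eq107}) with boundary datum $-\sigma({\bold u}^0).\nu$ prescribed on $\partial S$ (see (\ref{eq105})); the standing hypothesis $\sigma({\bold u}^0).\nu\in {\Huge[}H^{1/2}(\partial S){\Huge]}^2$ is exactly what is needed for this datum to be an admissible trace, so that ${\bold u}^1_{\vert S}\in {\Huge[}H^1(S){\Huge]}^2$; on $\Omega_S$ one extends ${\bold u}^1$ by the companion problem in (\ref{eq107}) with the same trace, and continuity of traces across $\partial S$ glues the two pieces into a single divergence-free element of $W_0$.

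Next I would set $\underline{\bold u}^\varepsilon={\bold u}^\varepsilon-{\bold u}^0-\varepsilon{\bold u}^1\in W_0$ and insert ${\bold u}^0+\varepsilon{\bold u}^1$ into the variational identity (\ref{eq3bis}) written with $\beta=\gamma=0$. Using $\sigma({\bold u}_0)=0$, the leading-order relation ${\bold u}^0={\bold u}_0$ on $\partial S$ coming from (\ref{eq102}) (so that the term $\frac{\alpha}{\varepsilon}\int_{\partial S}({\bold u}^0-{\bold u}_0).{\bold v}$ vanishes), and the order-zero equation (\ref{eq104}) defining ${\bold u}^1$, every $O(\varepsilon^{-1})$ and $O(1)$ contribution cancels against the right-hand side and one is left with the clean residual identity
\begin{equation}
\forall {\bold v}\in W_0,\;\intj_{\Omega}\sigma(\underline{\bold u}^\varepsilon).\gamma({\bold v})+\fracj{\alpha}{\varepsilon}\intj_{\partial S}\underline{\bold u}^\varepsilon.{\bold v}=-\varepsilon\intj_{\Omega}\sigma({\bold u}^1).\gamma({\bold v}).
\end{equation}
This is the exact analogue of the identity underlying (\ref{eq23}); the only change is that the forcing is now $O(\varepsilon)$, carried by ${\bold u}^1$, instead of $O(\varepsilon^2)$, carried by ${\bold u}^2$. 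This is precisely why ${\bold u}^2$ can be dropped: it is not needed for an $O(\varepsilon)$ estimate, and constructing it when $\beta=0$ would demand the further regularity $\sigma({\bold u}^1).\nu\in{\Huge[}H^{1/2}(\partial S){\Huge]}^2$, hence still more regularity on ${\bold u}^0$.

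The energy estimate then repeats (\ref{eq23})--(\ref{eq24}). Taking ${\bold v}=\underline{\bold u}^\varepsilon$, bounding the right-hand side by the triangular Cauchy--Schwarz inequality and the continuity constant $c$ of the elasticity form, and absorbing the resulting $\frac{c}{2\xi}\vert\vert\underline{\bold u}^\varepsilon\vert\vert_{1,\Omega}^2$ into the left-hand side through Korn's inequality (coercivity holds on $W_0$ because its elements vanish on $\Gamma_0$), I would obtain $\varepsilon$-independent constants with
\begin{equation}
c_0\vert\vert\underline{\bold u}^\varepsilon\vert\vert_{1,\Omega}^2+\fracj{\alpha}{2\varepsilon}\vert\vert\underline{\bold u}^\varepsilon\vert\vert_{0,\partial S}^2\leq c_1\varepsilon^2,
\end{equation}
whence $\vert\vert\underline{\bold u}^\varepsilon\vert\vert_{1,\Omega}\leq c\,\varepsilon$ and $\vert\vert\underline{\bold u}^\varepsilon\vert\vert_{0,\partial S}\leq c\,\varepsilon^{3/2}$. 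A final triangle inequality, writing ${\bold u}^\varepsilon-{\bold u}^0=\underline{\bold u}^\varepsilon+\varepsilon{\bold u}^1$ and recalling ${\bold u}^0={\bold u}_0$ on $\partial S$ so that ${\bold u}^\varepsilon-{\bold u}_0=\underline{\bold u}^\varepsilon+\varepsilon{\bold u}^1$ there, absorbs the $O(\varepsilon)$ contribution of $\varepsilon{\bold u}^1$ and yields the announced bounds (\ref{eq110}).

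The main obstacle is not this energy step, which is routine once the residual identity is available, but securing the very existence of ${\bold u}^1$ in $W_0$. For $\beta>0$ the $S$-component of ${\bold u}^1$ came from a bilinear form coercive on ${\Huge[}H^1(S){\Huge]}^2$ and required no extra regularity of the data; for $\beta=0$ that $S$-problem degenerates into a Dirichlet elasticity problem whose datum is the normal stress $-\sigma({\bold u}^0).\nu$, which a priori lies only in ${\Huge[}H^{-1/2}(\partial S){\Huge]}^2$. Upgrading it to ${\Huge[}H^{1/2}(\partial S){\Huge]}^2$ --- the content of the hypothesis, attainable by a hidden-regularity / Hadamard domain-derivative argument when $\partial S$ is smooth enough, but failing at re-entrant corners --- is the delicate point, and is the reason the statement must be made conditional on the regularity of $\sigma({\bold u}^0).\nu$.
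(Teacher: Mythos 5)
Your proposal is correct and follows exactly the route the paper intends: it repeats the energy argument of Theorem \ref{th1} with the expansion truncated at ${\bold u}^0+\varepsilon{\bold u}^1$ (so that ${\bold u}^2$ and the attendant extra regularity on $\sigma({\bold u}^1).\nu$ are never needed), and it correctly identifies the hypothesis $\sigma({\bold u}^0).\nu\in{\Huge[}H^{1/2}(\partial S){\Huge]}^2$ as precisely what is required to construct ${\bold u}^1$ in $W_0$ via the Dirichlet problems (\ref{eq107}). The paper itself omits the details on the grounds of similarity with Theorem \ref{th1}; your residual identity, the resulting bound $c_0\vert\vert\underline{\bold u}^\varepsilon\vert\vert_{1,\Omega}^2+\frac{\alpha}{2\varepsilon}\vert\vert\underline{\bold u}^\varepsilon\vert\vert_{0,\partial S}^2\leq c_1\varepsilon^2$, and the final triangle inequality supply those details faithfully.
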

\begin{remark}\label{rem9bis}
One should notice that the convergence on $S$ to the rigid body velocity is in $\varepsilon$ as in the case where $\beta>0$. The correctors ${\bold u}^1$ have in fact no real interest because the term that we need to estimate is ${\bold u}^0$. Let us also emphasize on the interest of this strategy which will clearly appear in the penalty-duality method.
\hfill $\Box$
\end{remark}
%
%\begin{remark}\label{rem80}
 %Using interpolation between Hilbert spaces, it is possible to complete the results of Theorem \ref{th3}. Hence if one has only $\sigma({\bold u}^0).\nu\{\Huge [}L^2(\partial S){\Huge ]}^2$ one can prove that the error bound in $L^2(\Omega)$-norm between ${\bold u}^\varepsilon$ and ${\bold u}^0$ is in $\sqrt{\varepsilon}$. But this is not a very useful result. Details of this interpolation technics can be found in J.L. Lions and E. Magenes \cite{lionsmagenes}.\hfill $ Box$
%\end{remark}
%
 \section{A simple numerical discussion between the various penalty methods}\label{test1} In order to compare the different penalty terms we discuss a very simple example which shows the various behavior of the three penalty terms considered in this paper. 
 \subsection{The test case} The model is the following one:
 \beq\label{eq600}\hskip-.5cm\left \{\begin{array}{l}V= \{v\in H^1(]0,L[),\;v(0)=0\},\;\hbox{ find }u\in V\;\hbox{ that: }
\displaystyle \forall v\in V,\\\\\intj_0^L\hskip-.05cm\fracj{du}{dx}\fracj{dv}{dx}\hskip-.05cm+\hskip-.05cm\fracj{\alpha}{\varepsilon}\left(u\left(\frac{\tiny L}{\tiny 2}\right)\hskip-.05cm-\hskip-.05cmu_0\right)v\left(\frac{L}{2}\right)\hskip-.05cm+\hskip-.05cm\intj_0^L\hskip-.05cm \left\lbrace \fracj{\beta}{\varepsilon}\fracj{du}{dx}\fracj{dv}{dx}\hskip-.05cm+\hskip-.05cm\fracj{\gamma}{\varepsilon}\left(u-u_0\right)v \right\rbrace \hskip-.05cm=\hskip-.05cm0.\end{array}\right.\eeq
 \\
 
 The term $u_0$ is a given constant and $\varepsilon$ is the penalty parameter in order to prescribe approximately $u=u_0$ on $]L/2,L[$. The three parameters $\alpha,\beta$ and $\gamma$ are chosen equal to $0$ or $1$ in order to compare their efficiency in the penalty strategy. The solution method used is a first degree finite element method (1000 points). We have plotted the results obtained for two choices of the penalty parameter $\varepsilon$ on Figures \ref{fig1}-\ref{fig2}. The space derivative of the solution is plotted on Figure \ref{fig3} for $\varepsilon=10^{-2}$

 \subsection{Comments on the results} First of all it is worth noting that the method with $\alpha=\gamma=0$ and $\beta>0$ is not adapted to prescribe a given movement of the structure. As it is explained in remark \ref{rem7}, the solution is constant on $]L/2,L[$ but it is not the one that one hopes. Nevertheless, if we consider the two terms $\alpha>0$ and $\beta\geq 0$ or $\beta>0$ and $\gamma>0$ the results are are much better. But in the second case the converge is not as good as than in the first one. If one restricts the penalty term to $\alpha=\beta=0$ and $\gamma>0$ the convergence occurs but is slower than in the case where $\alpha>0$, even if this is the only term which is kept.
 
 Concerning the derivatives of the solution with respect to the space coordinate one can see on Figure \ref{fig3} that the convergence is not satisfying if $\alpha=0$. This the reason why we develop in the next section \ref{penaltyduality} a penalty-duality method which implies only this boundary term. Furthermore, the dual variable (Lagrange multiplier) is easier to handle when only this boundary term is taken into account. In case of internal conditions the duality is a little bit more complex. We refer to the so-called Arlequin method developed by H. Ben Dhia \cite{hachmi1}-\cite{hachmi2} and in slightly different context by T. Belytschko \& all  \cite{ted}.
 %=======
 \begin{figure}[htbp] %  figure placement: here, top, bottom, or page
    \centering
    \includegraphics[width=13cm,height=18cm]{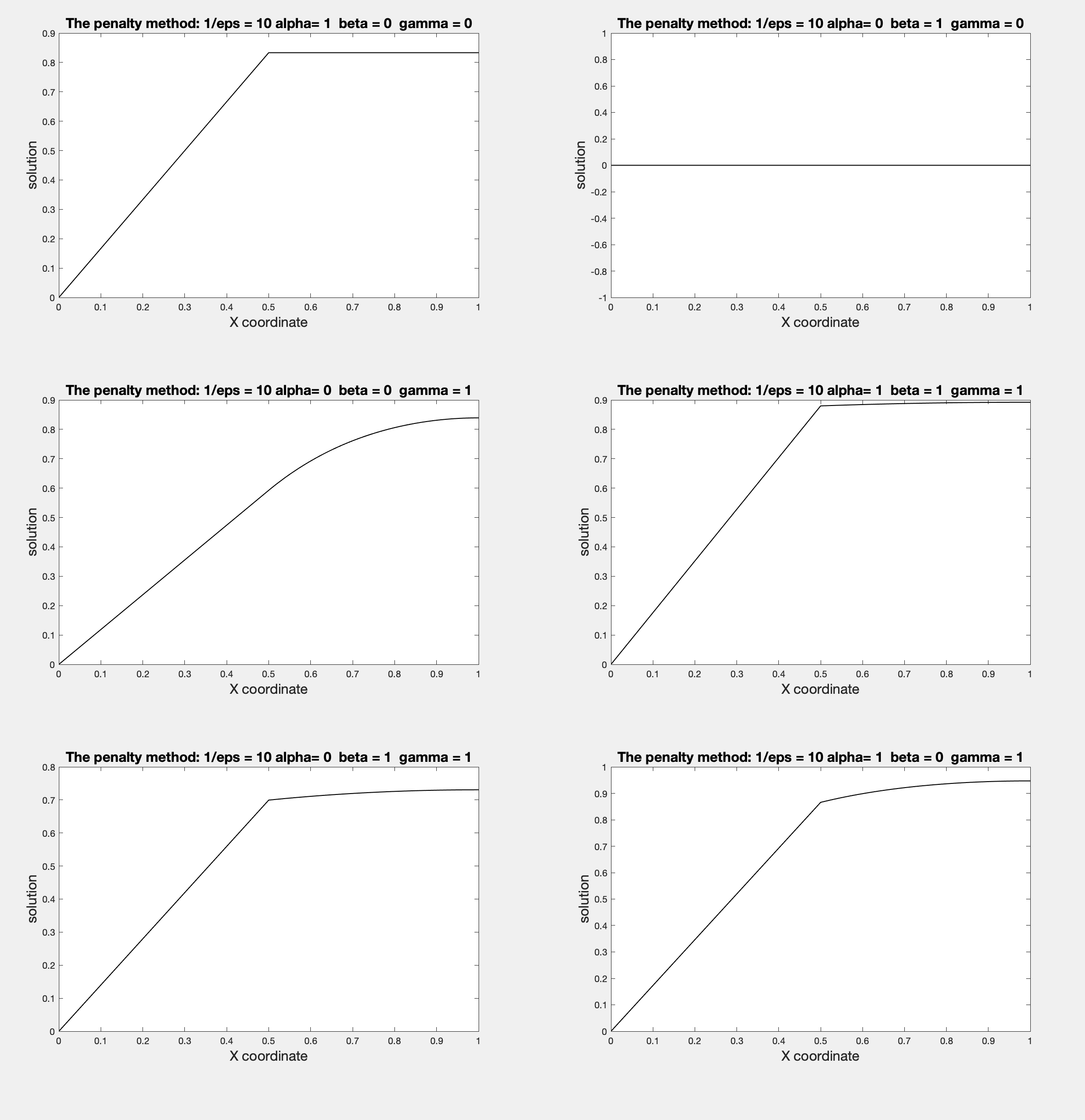} 
    \caption{Penalty solution for $\varepsilon=0.1$}
    \label{fig1}
 \end{figure}
 %=======
 \begin{figure}[htbp] %  figure placement: here, top, bottom, or page
    \centering
    \includegraphics[width=13cm,height=18cm]{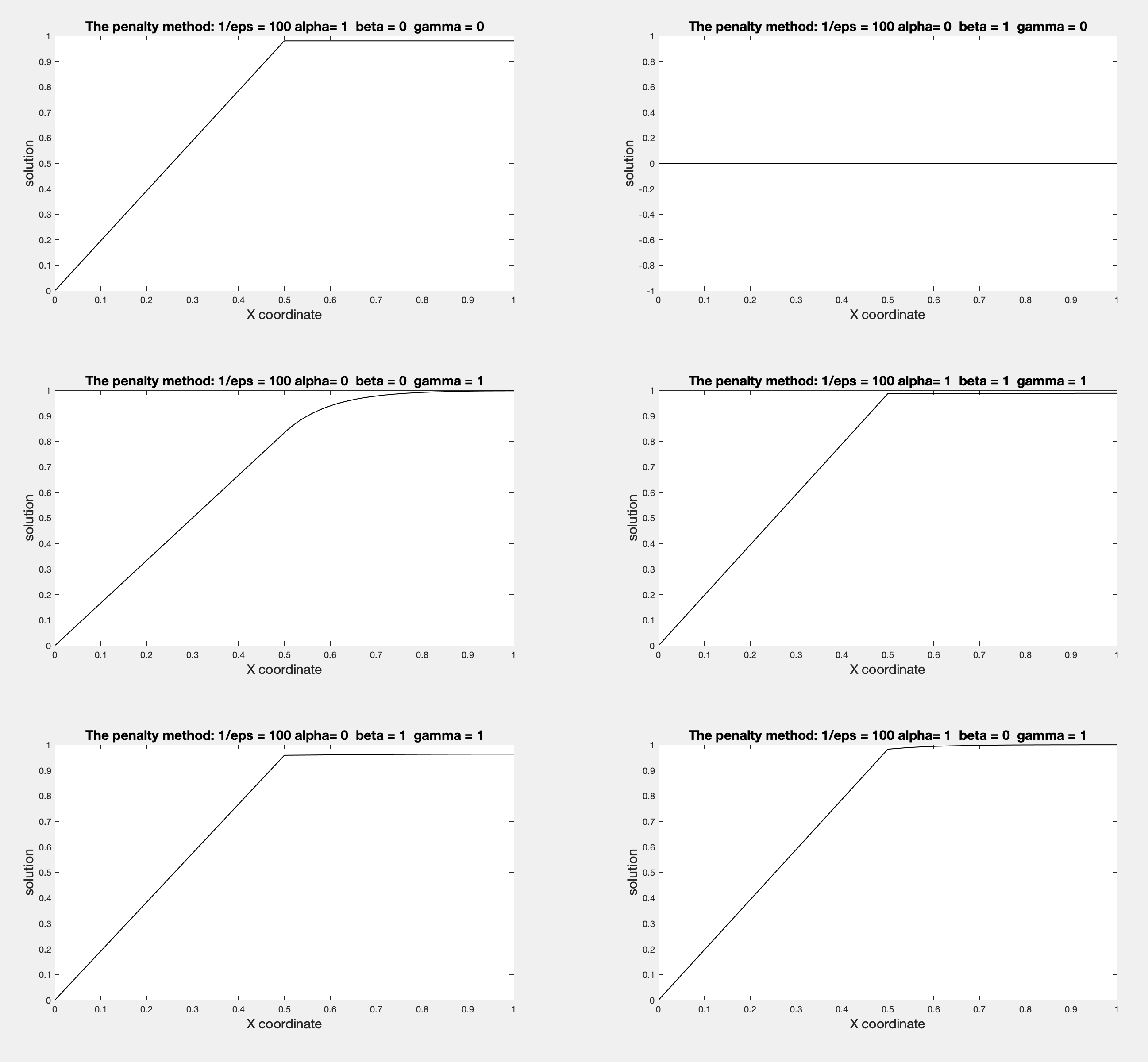} 
    \caption{Penalty solution for $\varepsilon=0.01$}
    \label{fig2}
 \end{figure}
 %
 %=========
 \begin{figure}[htbp] %  figure placement: here, top, bottom, or page
    \centering
    \includegraphics[width=13cm,height=18cm]{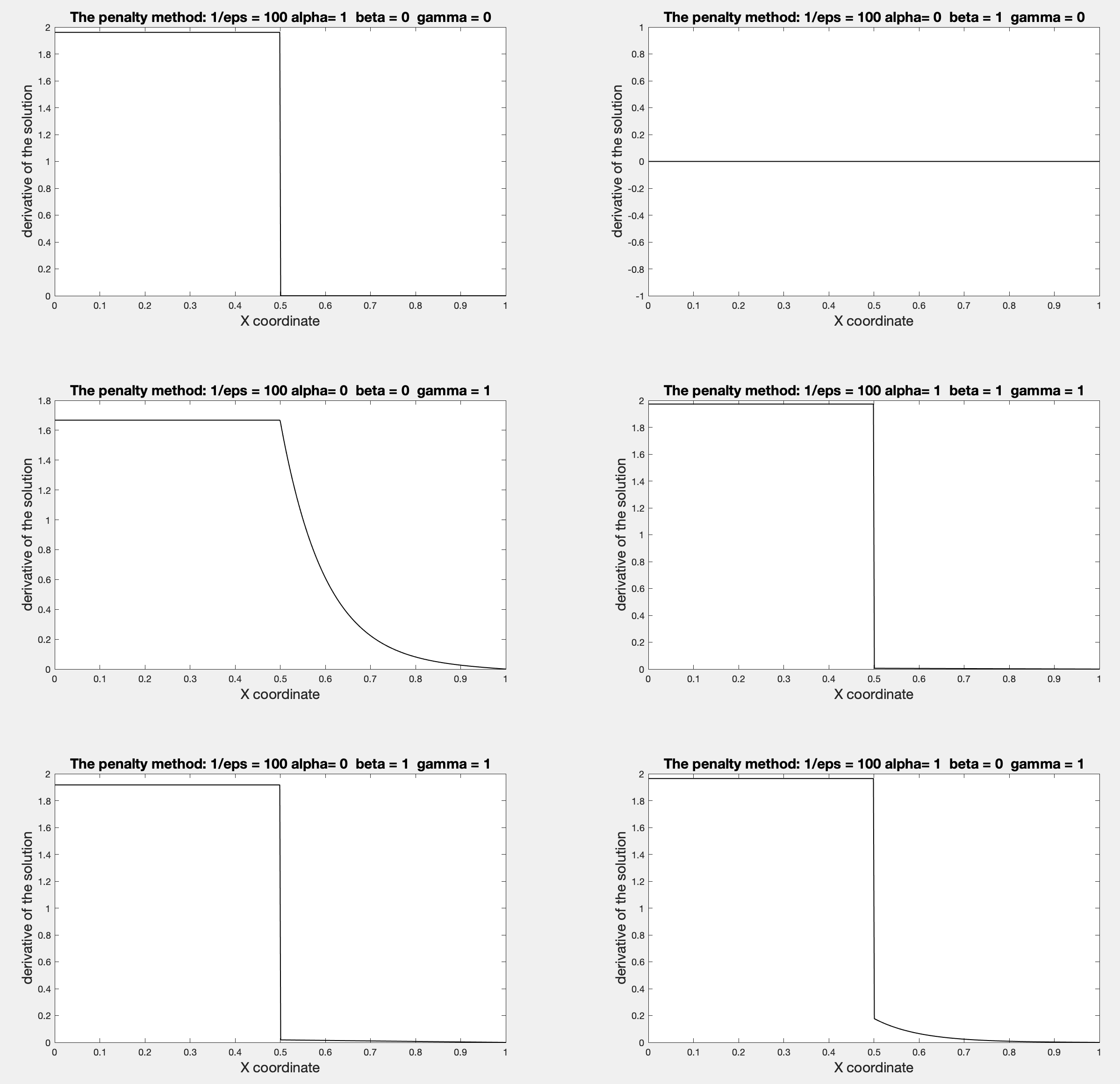} 
    \caption{Derivative in $x$ of the penalty solution for $\varepsilon=0.01$}
    \label{fig3}
 \end{figure}
 %========
 %
\section{The penalty-duality method}\label{penaltyduality} In this section, we consider the case where $\alpha>0$ and $\beta=\gamma=0$. 
Because the penalty model leads to ill conditioning, one can suggest to use a variant of this strategy introduced fifty years ago by D. Bertsekas \cite{bertsekas} and known as the penalty-duality method. The point is to satisfy exactly the constraints which are considered in the penalty term by a dual treatment. Hence, for any $r>0$ ($r$  stands for $1/\varepsilon$ but will be moderate at the opposite of what happens in the penalty strategy),  we introduce the Lagrangian defined by:
\beq\label{eq26}
L({\bold v},\lambda,\mu)=\fracj{1}{2}\intj_{\Omega}\sigma({\bold v}).\gamma({\bold v})+\fracj{\alpha r}{2}\intj_{\partial S}\vert\vert {\bold v}-{\bold u}_0\vert\vert^2+\intj_{\partial S}\mu.({\bold v}-{\bold u}_0)-\intj_{\Gamma_1} {\bold g}.{\bold v},\eeq
where ${\bold v}\in W_0,\;\mu\in {\Huge [}H^{-1/2}(\partial S){\Huge ]}^2$ (we use the notation with the integral for the duality\footnote{the dual of $H^{1/2}(\partial S)$ is $H^{-1/2}(\partial S)$ in  $1D$ which is the case here.} but we know that it is an abuse; see remark \ref{rem12}). One can add the penalty term inside $S$ (see \cite{liberge1}-\cite{liberge3}-\cite{liberge2}):
 $$\fracj{\gamma}{\varepsilon}\intj_S\vert {\bold u}-{\bold u}_0\vert^2.$$

It is not really useful for the static case that we consider here but it is a necessity  for the dynamic case as those treated in the numerical tests of subsections \ref{test3} and \ref{test4}.\\

A saddle point of $L$ is an element $({\bold u}^r,\lambda)\in W_0\times {\Huge [}H^{-1/2}(\partial S){\Huge ]}^2$ such that:
\beq\label{eq27}\left \{\begin{array}{l}
\forall {\bold v}\in W_0,\;\intj_\Omega\sigma({\bold u}^r).\gamma({\bold v})+{\alpha r}\intj_{\partial S}({\bold u}^r-{\bold u}_0).{\bold v}+
\intj_{\partial S}\lambda.{\bold v}=\intj_{\Gamma_1}{\bold g}.{\bold v},\\\\
\forall \mu\in {\Huge [}H^{-1/2}(\partial S){\Huge ]}^2,\;\intj_{\partial S}\mu.({\bold u}^r-{\bold u}_0)=0, 

\end{array}\right.\eeq
\begin{remark}\label{rem12} The writing with an integral of the duality between the spaces ${\Huge [}H^{-1/2}(\partial S){\Huge ]}^2$ and ${\Huge [}H^{-1/2}(\partial S){\Huge ]}^2$ is a familiarity which is not correct because the functions involved  are not in the space $L^2(\partial S)$. It just a commodity (in our mind). 
\hfill $\Box$
\end{remark}
The existence and uniqueness are relevant of the general A.K. Aziz and I. Babuska Theorem \cite{babuska} that we recall hereafter for sake of convenience.
\begin{theorem}\label{th3} [Aziz-Babuska] Let $H_1$ and $H_2$ be two Hilbert spaces, $b$ a bilinear and continuous form on $H_1\times H_2$ and $j$ a linear and continuous form on $H_2$. It is asuumed that the following properties hold:
\begin{enumerate}[\#]
\item 1 if $q\in H_2$ is such that: $\forall \mu\in H_1,\;b(\mu,q)=0$ then $q=0$;
\item 2 there exists a constant $\delta>0$  such that $\forall  \mu\in H_1,\;\supj_{q\in H_2}\fracj{b(\mu,q)}{\vert \vert q\vert\vert_{H_2}}\geq \delta \vert\vert \mu\vert\vert_{H_1}.$
\end{enumerate}
Then there exists a unique element  $\lambda\in H_1$ such that:
\beq\label{eq120}\forall q\in H_2,\;b(\lambda,q)=j(q).\eeq
\end{theorem}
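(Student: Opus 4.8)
The plan is to recast the entire statement as an invertibility question for a single bounded operator, and then to read off the two hypotheses as ``injective with closed range'' and ``dense range''. First I would use the Riesz representation theorem in $H_2$ to define a bounded linear operator $B\colon H_1\to H_2$ by requiring $(B\mu,q)_{H_2}=b(\mu,q)$ for every $\mu\in H_1$ and every $q\in H_2$; continuity of $b$ guarantees $B$ is well defined and bounded. With this identification the supremum in hypothesis \#2 is exactly the Cauchy--Schwarz computation $\sup_{q\neq 0} b(\mu,q)/\|q\|_{H_2}=\|B\mu\|_{H_2}$, so \#2 becomes the lower bound $\|B\mu\|_{H_2}\geq \delta\|\mu\|_{H_1}$ for all $\mu\in H_1$. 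Hypothesis \#1 says that the only $q$ annihilating all of $H_1$ through $b$ is $q=0$, i.e. $(B\mu,q)_{H_2}=0$ for every $\mu$ forces $q=0$, which is precisely the statement that the range of $B$ has trivial orthogonal complement in $H_2$.

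Next I would deduce that $B$ is a bijection. The lower bound from \#2 makes $B$ injective and bounded below, and a standard Cauchy-sequence argument shows its range is closed: if $B\mu_n\to y$ in $H_2$, then $\|\mu_n-\mu_m\|_{H_1}\le \delta^{-1}\|B\mu_n-B\mu_m\|_{H_2}\to 0$, so $\mu_n\to\mu$ for some $\mu\in H_1$, and by continuity $B\mu=y$, hence $y$ lies in the range. Since a closed subspace coincides with its double orthogonal complement, $\mathrm{Range}(B)=(\mathrm{Range}(B)^\perp)^\perp$; by \#1 the inner orthogonal complement is $\{0\}$, and therefore $\mathrm{Range}(B)=H_2$. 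Thus $B$ is onto as well as one-to-one, so it is a continuous bijection whose inverse is bounded by $\delta^{-1}$.

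Finally I would represent the data and conclude. By Riesz in $H_2$ there is a unique $f\in H_2$ with $j(q)=(f,q)_{H_2}$ for all $q\in H_2$, and the sought identity $b(\lambda,q)=j(q)$ for all $q$ is equivalent to $(B\lambda,q)_{H_2}=(f,q)_{H_2}$ for all $q$, i.e. to $B\lambda=f$. Since $B$ is a bijection, this has the unique solution $\lambda=B^{-1}f$, which is exactly the assertion, together with the stability bound $\|\lambda\|_{H_1}\le \delta^{-1}\|j\|_{H_2'}$. The main obstacle, and the only place where both hypotheses are genuinely needed, is the surjectivity step: \#2 alone gives injectivity and a closed range but no density, while \#1 alone gives density but no control on closedness; it is the combination closed $+$ dense $=$ everything that delivers ontoness, and arranging that interplay correctly is the crux of the argument.
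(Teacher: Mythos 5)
Your argument is correct and complete. Note first that the paper itself does not prove Theorem \ref{th3}: it is recalled verbatim ``for sake of convenience'' with a citation to Babu\v{s}ka--Aziz, so there is no in-paper proof to compare against. What you give is the standard operator-theoretic proof of this Banach--Ne\v{c}as--Babu\v{s}ka-type statement, and every step holds up: the Riesz representation of $b(\mu,\cdot)$ defines a bounded $B\colon H_1\to H_2$ with $\sup_{q\neq 0} b(\mu,q)/\Vert q\Vert_{H_2}=\Vert B\mu\Vert_{H_2}$, so hypothesis $\#2$ is exactly the lower bound $\Vert B\mu\Vert_{H_2}\geq\delta\Vert\mu\Vert_{H_1}$ (injectivity plus closed range via your Cauchy-sequence argument), hypothesis $\#1$ is exactly $\mathrm{Range}(B)^{\perp}=\{0\}$ (density), and closed plus dense gives surjectivity through the identity $M=M^{\perp\perp}$ for closed subspaces. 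The reduction of $b(\lambda,q)=j(q)$ to $B\lambda=f$ with $f$ the Riesz representative of $j$, together with the bound $\Vert\lambda\Vert_{H_1}\leq\delta^{-1}\Vert j\Vert_{H_2'}$, finishes both existence and uniqueness. Your closing diagnosis of where each hypothesis is genuinely needed is also the right one; the only cosmetic caveat is that the supremum in $\#2$ must of course be read over $q\neq 0$, as you implicitly do.
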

Let us now apply this Theorem to our case.
\begin{theorem}\label{th4}{Existence and uniqueness of a solution to (\ref{eq27}). } Let $r>0$ and $\alpha>0$. The function ${\bold g}$ is assumed as previously to be in the space ${\Huge [}L^2(\Gamma_1){\Huge ]}^2$. Hence the system (\ref{eq110}) has a unique solution  $({\bold u}^r,\lambda)$in the space $W_0\times \left[ H^{-1/2}(\partial S)\right]^2$. Furthermore ${\bold u}^r={\bold u}$ solution of the initial problem (\ref{eq3}).
\end{theorem}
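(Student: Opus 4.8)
The plan is to read the saddle-point system (\ref{eq27}) as a single mixed variational problem on the product space $X=W_0\times\left[H^{-1/2}(\partial S)\right]^2$ and to verify the hypotheses of the Aziz--Babuska Theorem \ref{th3} for the global bilinear form
\beq B\bigl(({\bold u},\lambda),({\bold v},\mu)\bigr)=\intj_\Omega\sigma({\bold u}).\gamma({\bold v})+\alpha r\intj_{\partial S}{\bold u}.{\bold v}+\intj_{\partial S}\lambda.{\bold v}+\intj_{\partial S}\mu.{\bold u},\eeq
together with $F(({\bold v},\mu))=\intj_{\Gamma_1}{\bold g}.{\bold v}+\alpha r\intj_{\partial S}{\bold u}_0.{\bold v}+\intj_{\partial S}\mu.{\bold u}_0$. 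Writing $a({\bold u},{\bold v})=\intj_\Omega\sigma({\bold u}).\gamma({\bold v})+\alpha r\intj_{\partial S}{\bold u}.{\bold v}$ and $b({\bold v},\mu)=\intj_{\partial S}\mu.{\bold v}$, it is classical (Brezzi's theory) that the two Aziz--Babuska conditions for $B$ reduce to the coercivity of $a$ on $W_0$ and to an inf--sup estimate for the trace pairing $b$.

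First I would dispatch the easy verifications. Since every ${\bold v}\in W_0$ vanishes on $\Gamma_0$ (of positive measure), Korn's inequality \cite{duvautlions} gives $\intj_\Omega\sigma({\bold v}).\gamma({\bold v})\geq c_0\vert\vert{\bold v}\vert\vert_{1,\Omega}^2$, and the extra nonnegative term $\alpha r\intj_{\partial S}\vert\vert{\bold v}\vert\vert^2$ only reinforces this, so $a$ is coercive and symmetric. Continuity of $a$ and of $b$ follows from the boundedness of $\gamma$ and from the trace theorem $H^1(\Omega)\to H^{1/2}(\partial S)$ \cite{JLLEM}, which yields $b({\bold v},\mu)\leq c\,\vert\vert\mu\vert\vert_{-1/2,\partial S}\,\vert\vert{\bold v}\vert\vert_{1,\Omega}$.

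The heart of the argument, and the step I expect to be the main obstacle, is the inf--sup condition: for every $\mu$ one must produce ${\bold v}\in W_0$ with $b({\bold v},\mu)\geq\delta\,\vert\vert\mu\vert\vert_{-1/2,\partial S}\,\vert\vert{\bold v}\vert\vert_{1,\Omega}$, equivalently a bounded right inverse of the trace map $W_0\to\left[H^{1/2}(\partial S)\right]^2$. The delicate point is the constraint $\mathrm{div}({\bold v})=0$ on all of $\Omega$: because ${\bold v}$ is divergence free inside $S$, its trace necessarily satisfies $\intj_{\partial S}{\bold v}.\nu=0$, so the admissible traces fill only the codimension-one subspace $\{\phi:\intj_{\partial S}\phi.\nu=0\}$. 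I would build the divergence-free lifting in two pieces, a Stokes/Bogovskii-type extension into $S$ (possible precisely because the normal flux vanishes) and an extension into the annular region $\Omega_S$, where the residual flux is evacuated through $\Gamma_1$ on which ${\bold v}$ is free, and then recover the estimate by duality against $H^{1/2}(\partial S)$. This is also where the well-posedness of $\lambda$ is decided: one checks $b({\bold v},\nu)\equiv0$ on $W_0$, so the normal direction of the multiplier is controlled only through this flux compatibility while its tangential part is pinned by the unconstrained tangential trace.

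Finally, to identify the solution, the second equation of (\ref{eq27}) gives $\intj_{\partial S}\mu.({\bold u}^r-{\bold u}_0)=0$ for all $\mu$, hence ${\bold u}^r={\bold u}_0$ on $\partial S$; in particular the penalty term $\alpha r\intj_{\partial S}({\bold u}^r-{\bold u}_0).{\bold v}$ vanishes at the solution. Testing the first equation with ${\bold v}\in K_0$ (the space defined at (\ref{eq16})), which is zero in $S$ and hence on $\partial S$, annihilates both the boundary penalty term and the duality term $\intj_{\partial S}\lambda.{\bold v}$ and leaves $\intj_{\Omega_S}\sigma({\bold u}^r).\gamma({\bold v})=\intj_{\Gamma_1}{\bold g}.{\bold v}$. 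Together with ${\bold u}^r={\bold u}_0$ on $\partial S$ and ${\bold u}^r=0$ on $\Gamma_0$, this is exactly the weak form (\ref{eq3})--(\ref{eq102}) satisfied by the original velocity on $\Omega_S$, so by the uniqueness recalled after (\ref{eq7}) one concludes ${\bold u}^r={\bold u}$ on $\Omega_S$. Using the full test space and integrating by parts then identifies $\lambda$ with the jump of the normal stress $\sigma({\bold u}^r).\nu$ across $\partial S$, i.e. with the force exerted by the flow on the structure, which is precisely the quantity this formulation is designed to evaluate without the large parameter $1/\varepsilon$.
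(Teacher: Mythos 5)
Your route is genuinely different from the paper's: you treat (\ref{eq27}) as one mixed problem on $W_0\times\left[H^{-1/2}(\partial S)\right]^2$ and invoke Brezzi/Aziz--Babuska globally, whereas the paper first pins down ${\bold u}^r$ directly (testing with $W_{00}$ and using the second equation of (\ref{eq27}) to get ${\bold u}^r={\bold u}_0$ on $\partial S$, hence ${\bold u}^r={\bold u}$ on $\Omega_S$ and ${\bold u}^r={\bold u}_0$ on $S$), and only afterwards solves a purely boundary equation for $\lambda$ by duality between $H^{-1/2}(\partial S)$ and $H^{1/2}(\partial S)$, using a lifting $R$ of traces into $V_0$. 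Your final identification paragraph coincides with the paper's first step, except that testing only with $K_0$ determines ${\bold u}^r$ on $\Omega_S$ but not inside $S$; to get uniqueness of ${\bold u}^r$ in $W_0$ you also need the interior test functions in $\left[H^1_0(S)\right]^2$, which force ${\bold u}^r={\bold u}_0$ throughout $S$.

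The genuine gap is in the step you yourself flag as the crux. The inf--sup condition you need is false on the full multiplier space: as you observe, every ${\bold v}\in W_0$ is divergence free in $S$, so $\intj_{\partial S}{\bold v}.\nu=0$ and hence $b({\bold v},\nu)=0$ for all ${\bold v}\in W_0$. Taking $\mu=\nu$ shows the inf--sup constant over $\left[H^{-1/2}(\partial S)\right]^2$ is zero, so no Bogovskii-type lifting can rescue the estimate --- the traces of $W_0$ fill only the closed codimension-one subspace $\{\phi:\intj_{\partial S}\phi.\nu=0\}$, and the Brezzi machinery can only deliver existence and uniqueness of $\lambda$ modulo $\a R\,\nu$ (equivalently, in a complement of $\a R\,\nu$, or after adding the normalization $\intj_{\partial S}\lambda.\nu=0$); indeed $({\bold u}^r,\lambda+c\nu)$ solves (\ref{eq27}) whenever $({\bold u}^r,\lambda)$ does. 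Your sentence about the normal direction being ``controlled only through this flux compatibility'' gestures at this but does not close it: as written, the plan proves existence but not the claimed uniqueness of $\lambda$ in $\left[H^{-1/2}(\partial S)\right]^2$. The paper's sequential argument sidesteps the obstruction by defining $\lambda$ as the representative of the residual functional $j$ extended to \emph{all} of $\left[H^{1/2}(\partial S)\right]^2$ via the lifting $R$ into $V_0$ (divergence free only in $\Omega_S$, where the flux through $\partial S$ can be evacuated across $\Gamma_1$); that extension is what selects one representative and makes the uniqueness assertion meaningful. To repair your proof, either restrict the multiplier space to such a complement before applying the inf--sup argument, or follow the paper and formulate the equation for $\lambda$ against the full trace space rather than against traces of $W_0$.
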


\begin{proof} 
If $({\bold u}^r,\lambda)\in W_0\times \left[ H^{-1/2}(\partial S)\right]^2$ is a solution it should satisfy the following relation:
\beq\label{eq111}
\forall {\bold v}\in W_{00},\;\intj_{\Omega}\sigma({\bold u}^r).\gamma({\bold v})=\intj_{\Gamma_1}{\bold g}.{\bold v},\;
\hbox{ and necessarily:\;}{\bold u}^r={\bold u}_0\hbox{ on }\partial S.
\eeq

Therefore ${\bold u}^r$ is the initial solution of equation (\ref{eq3}) which is also the term ${\bold u}^0$ computed in the asymptotic method(s) applied to the penalty method at section \ref{penalty} (for both $\alpha>0,\;\beta=0$ or $\alpha>0,\beta>0$). It also implies that it is unique and equal to the rigid body velocity ${\bold u}_0$ on the whole domain $S$ and not only on its boundary $\partial S$.
\\

Finally, in order to complete the proof of Theorem \ref{th4} it is sufficient to prove the existence and uniqueness of $\lambda$ using Aziz-Babuska Theorem \ref{th3}.

Let us set:
\beq\label{eq112}\left \{ \begin{array}{l}
H_1=\left[ H^{-1/2}(\partial S)\right]^2,\;H_2=\left[ H^{-1/2}(\partial S)\right]^2, 
\\
\\
\forall \mu\in H_1,\;\forall {\bold v}\in H_2,\;b(\mu,{\bold v})=\alpha\intj_{\partial S}\mu.{\bold v},\\\\
\forall {\bold v}\in H_2,\;j({\bold v})=\intj_{\Omega_S}\sigma({\bold u}^r).\gamma({\bold v})-\intj_{\Gamma_1}{\bold g}.{\bold v}.
\end{array}\right.
\eeq
The first requirement  of the Aziz-Babuska Theorem is obvious because of the duality between $H_1$ and $H_2$. The second one  is satisfied with $\delta=1$ because of the definition of the norm in $H_1$.

The only point to be verified is the definition and the continuity of $j$ on $H_2$. 
First of all $j$ is clearly linear. It depends on the value of ${\bold v}$ on the boundary $\partial S$ because of the definition of ${\bold u}^r={\bold u}^0$. Hence, for a given element ${\bold v}$ in $H_2$ one can associate any element in the space $V_0$ denoted also by ${\bold v}$ and which gives the same value of $j$. Let us recall that there exists a linear and continuous operator -say $R$- from $H_2$ into $V_0$ \cite{JLLEM} such that:
\beq\label{eq115}\exists c_9>0,\hbox{ independent on }{\bold v}\in H_1\hbox{ suh that: }
\vert\vert R({\bold v})\vert\vert_{1,\Omega_S}\leq c_9\vert\vert {\bold v}\vert\vert_{1/2,\partial S}.\eeq
Hence:
\beq \label{eq116}
\vert j({\bold v})\vert=\displaystyle \vert\intj_{\Omega_S}\sigma(R{\bold v}).\gamma(R{\bold v})-\intj_{\Gamma_1}{\bold g}.R({\bold v})\vert \leq c_{10}\vert\vert {\bold v}\vert\vert_{1/2,\partial S}, 
\eeq
Hence the assumptions of Theorem \ref{th3} are satisfied and we can conclude the proof of Theorem \ref{th4}. 
\end{proof}
\begin{remark}\label{rem90} At the opposite of what occurs in the penalty method, the solution of the penalty-duality model is exactly the one of the initial problem.\hfill $\Box$
\end{remark}
\begin{remark}\label{rem20}It is a basic point to notice that the solution ${\bold u}^r$ is independent on $r$. Therefore it is not necessary to choose a large value as in the penalty model ($r$ is the equivalent of $1/{\varepsilon}$) . Furthermore one could point out that the strategy could also be applied without the penalty term but in this case the algorithm studied in the next section is more difficult to use as we underline in the following (difficulty in adjusting the gradient step in the Uzawa  solution method \cite{bertsekas}, \cite{glo}). If one uses a conjugate gradient on the dual problem (in $\lambda$), the operator has a better condition number if one add  the penalty term. \hfill $\Box$\end{remark}
\begin{remark}\label{rem11}
A possibility that we do not recommend consists in considering the two penalty terms ($\alpha>0$ and $\beta>0$). Nevertheless the dual space for the Lagrange multiplier of the the constraint $\gamma_{ij}({\bold u})=0$ on $S$ is a little bit complex and not very convenient in a practical application. Furthermore the numerical  implementation  seems to be some more complicated. \hfill $\Box$
\end{remark}
%
%============
\section{The numerical algorithm and its solution method} In this section we focus on the Uzawa algorithm \cite{glo}\cite{cea} for solving (\ref{eq27}). Let us start from a given value for $\lambda^{n_0}\in  {\Huge [}H^{-1/2}(\partial S){\Huge ]}^2$ (the most classical choice is to start from $\lambda^{n_0}=0$). For each $n\geq n_0$, we define ${\bold u}^n\in W_0$ solution of:
\beq\label{eq121}
\forall {\bold v}\in W_0,\;\intj_{\Omega}\sigma({\bold u}^n).\gamma({\bold v})+{r}\intj_{\partial S}({\bold u}^n-{\bold u}_0).{\bold v}=\intj_{\Gamma_1}{\bold g}.{\bold v}-\intj_{\partial S}\lambda^n.{\bold v}.\eeq
Then we upgrade $\lambda^n$ by setting:
\beq\label{eq122}
\lambda^{n+1}=\lambda^n+r({\bold u}^n-{\bold u}_0)\;\hbox{ on }\partial S.
\eeq
An important point is that if the first multiplier $\lambda^n$ belongs to the space ${\Huge[}L^2(\partial S){\Huge]}^2$ all the sequence $\lambda^n$ belongs to this space. But in general the solution $\lambda$ of (\ref{eq27}) doesn't belongs to this space unless one has a regularity property. In fact, one has (interpretation of (\ref{eq27})), 
$\lambda=-\sigma({\bold u}^r).\nu$ on $\partial S$. This regularity assumption is exactly the one used in the asymptotic analysis of the penalty model for $\alpha>0$ and $\beta=0$. It is convenient (but not necessary) in the following to adopt this hypothesis (the norm $L^2(\partial S)$ is more convenient in the writings than the one of $H^{-1/2}(\partial S)$).
\vskip.2cm

The convergence of the  algorithm is classical \cite{bertsekas}. But for sake of clarity let us summarize the proof hereafter with few remarks concerning the regularity of the multiplier $\lambda$ which are only valid for our case.
\begin{theorem}\label{th5}Let $({\bold u}^n,\lambda^n)\in W_0\times {\Huge [}H^{-1/2}(\partial S){\Huge ]}^2$ be the sequence defined by the algorithm (\ref{eq121})-(\ref{eq122}). We assume, just for sake of convenience in the writings, that: $$\lambda=-\sigma({\bold u}^r).\nu\in {\Huge [}L^2(\partial S){\Huge ]}^2.$$ Then for any $r>0$:
\beq\label{eq130}\left \{ \begin{array}{l}
\limj_{n\rightarrow \infty}\;{\bold u}^n={\bold u}^r={\bold u}\hbox{ solution of (\ref{eq3})},\\\\
\limj_{n\rightarrow \infty}\;\lambda^n=\lambda\hbox{  solution with ${\bold u}^r$ of (\ref{eq27})}.
\end{array}\right.
\eeq
\end{theorem}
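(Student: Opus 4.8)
This is the classical Uzawa-algorithm convergence proof, specialized to the saddle-point system \eqref{eq27}. Let me sketch the approach.
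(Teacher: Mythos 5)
Your proposal stops before it begins: after announcing that this is the classical Uzawa convergence argument and that you will ``sketch the approach,'' no sketch actually follows. There is no decomposition, no estimate, no limit taken --- so as written there is nothing to evaluate as a proof. Appealing to the result being ``classical'' is not a substitute for the argument, especially here where the statement carries specific claims (convergence of ${\bold u}^n$ to the solution of (\ref{eq3}) in $H^1(\Omega)$, and convergence of $\lambda^n$ in a precise topology) that depend on the particular structure of the saddle-point system (\ref{eq27}).

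For comparison, the paper's proof runs as follows. One introduces the gap variables $\underline{\bold u}^n={\bold u}^n-{\bold u}^r$ and $\underline\lambda^n=\lambda^n-\lambda$, subtracts the exact saddle-point equations from the iteration (\ref{eq121})--(\ref{eq122}), and computes
$$\intj_{\partial S}\vert\underline\lambda^{n}\vert^2-\intj_{\partial S}\vert\underline\lambda^{n+1}\vert^2
= r\left[\,2\intj_{\Omega}\sigma(\underline{\bold u}^n).\gamma(\underline{\bold u}^n)+r\intj_{\partial S}\vert\underline{\bold u}^n\vert^2\right]\geq 0,$$
so that $q^n=\intj_{\partial S}\vert\underline\lambda^n\vert^2$ is nonincreasing, hence convergent, hence $q^n-q^{n+1}\rightarrow 0$. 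Combined with the Korn/coerciveness of the elastic bilinear form this yields $\Vert{\bold u}^n-{\bold u}^r\Vert_{1,\Omega}\rightarrow 0$ and $\Vert{\bold u}^n-{\bold u}_0\Vert_{0,\partial S}\rightarrow 0$; the convergence of $\lambda^n$ to $\lambda$ in $H^{-1/2}(\partial S)$ then follows from the first equation of (\ref{eq121}) together with the inf-sup property (\# 2 of Theorem \ref{th3}). Note two points your one-line outline would have to address even if the ``classical'' proof were filled in: the telescoping monotonicity argument does \emph{not} give any rate for $r>0$ arbitrary (no contraction is claimed), and the multiplier converges only in $H^{-1/2}(\partial S)$ even under the extra regularity assumption $\lambda\in[L^2(\partial S)]^2$ --- a subtlety the theorem statement itself flags and which a generic citation of Uzawa convergence would miss.
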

\begin{proof} 
Let us introduce the {\it gap} variables:
$$\underline{\bold u}^n={\bold u}^n-{\bold u}^r,\;\;\underline{\lambda}^n=\lambda^n-\lambda.$$
One has the following equalities:
$$\left \{ \begin{array}{l}\forall {\bold v}\in W_0,\;\intj_{\Omega}\sigma(\underline{\bold u}^n).\gamma(\underline{\bold u}^n)+{r}\intj_{\partial S}\vert \underline{\bold u}^n\vert ^2+\intj_{\partial S}\underline{\bold u}^n\underline\lambda^n=0,
\\\\
\vert\underline{\lambda}^{n+1}\vert^2=\vert\underline {\lambda}^{n}\vert^2+{r^2}\vert \underline{\bold u}^n\vert ^2+{2r}\underline{\lambda}^n.\underline{\bold u}^n.
\end{array}\right.
$$
We deduce that:
$$\intj_{\partial S}\vert\underline{\lambda}^{n}\vert^2-\intj_{\partial S}\vert\underline{\lambda}^{n+1}\vert^2=r{\Huge [}-2\intj_{\partial S}\underline{\bold u}^n\underline\lambda^n-r\intj_{\partial S}\vert \underline{\bold u}^n\vert ^2{\Huge ]},$$
or else:
$$\begin{array}{l}\intj_{\partial S}\vert\underline{\lambda}^{n}\vert^2-\intj_{\partial S}\vert\underline{\lambda}^{n+1}\vert^2=r{\Huge [}2\intj_{\Omega} \sigma(\underline{\bold u}).\gamma(\underline{\bold u}^n)+2r\intj_{\partial S}\vert\underline{\bold u}^n\vert^2-r\intj_{\partial S}\vert \underline{\bold u}^n\vert^2{\Huge ]}\\\\
=r{\Huge [}2\intj_{\Omega} \sigma_{ij}(\underline{\bold u}).\gamma(\underline{\bold u}^n)+r\intj_{\partial S}\vert\underline{\bold u}^n\vert^2{\Huge ]}\geq 0.\end{array}$$
Because the real and positive sequence of numbers $q^n=\intj_{\partial S}\vert \underline {\lambda}^n\vert^2$ is decreasing, it is convergent and therefore it is a Cauchy sequence. Hence $\limj_{n\rightarrow \infty} q^n-q^{n+1}=0.$ As a consequence (using the $H^1(\Omega)$-coerciveness of the bilinear form $\intj_{\Omega}\sigma({\bold u}).\gamma({\bold v})$):
$$\left \{ \begin{array}{l}\limj_{n\rightarrow \infty}\vert\vert{\bold u}^n-{\bold u}_0\vert\vert_{0,\partial S}=0,
\\\\
\limj_{n\rightarrow \infty}\vert\vert {\bold u}^n-{\bold u}^r\vert\vert_{1,\Omega}=0.\end{array}\right. $$
The second convergence result reinforces the first one because of the continuity of the trace mapping from $H^1(\Omega)$ into $H^{1/2}(\partial S)$:
$$\limj_{n\rightarrow \infty}\vert\vert{\bold u}^n-{\bold u}_0\vert\vert_{1/2,\partial S}=0.$$
The final step consists in proving the convergence of the multiplier $\lambda^n$. From:
$$ \intj_{\partial S}\underline {\lambda}^n.{\bold v}=-\intj_{\Omega}\sigma( \underline{\bold u}^n).\gamma({\bold v})-r\intj_{\partial S}\underline {\bold u}^n.{\bold v}-\intj_{\Gamma_1}{\bold g}.{\bold v},$$
and because of the property $\#$ 2 mentioned in Theorem \ref{th3}, one has:
$$\limj_{n\rightarrow \infty}\vert\vert \lambda^n-\lambda\vert\vert_{-1/2,\partial S}=0.$$
 Even if we assumed the regularity $\lambda\in {\huge [}L^2(\partial S){\Huge ]}^2$ the standard convergence is only in the space ${\huge [}H^{-1/2}(\partial S){\Huge ]}^2$.
\end{proof}
\begin{remark}\label{rem30}
The main advantage in the choice of the penalty term reduced on the boundary of the structure is that it enables to decouple completely the numerical software and just requires a geometrical prolongation mapping from the boundary $\partial S$ of the structure into the open set $\Omega$ and a  geometrical restriction mapping from $\Omega$ on to $\partial S$. This advantage was underlined by many authors and a nice presentation by J. Hovnanian is given in \cite{penalty1}. \hfill $\Box$
\end{remark}
%
%============
\section{Few elementary tests for the penalty-duality strategy}\label{test2} Just in order to make a link with the elementary test introduced in section \ref{test1}, we apply the penalty-duality algorithm to the same model. The results are plotted on Figures \ref{fig4}-\ref{fig5}. They show the efficiency of this method regarding the precision even for $\varepsilon$ not so small. The numerical implementation in a complex model is certainly another discussion which will be carried out in A. Falaise and E. Liberge \cite{elaf}. In particular, the mapping connecting the boundary $\partial S$ with the global mesh is a cornerstone problem regarding the multiprocessor programming. 
%Another point is the use of a preconditioning of the dual problem in the penalty-duality strategy in order to increase the performances.
%
\begin{figure}[htbp] %  figure placement: here, top, bottom, or page
   \centering
   \includegraphics[width=11cm,height=7cm]{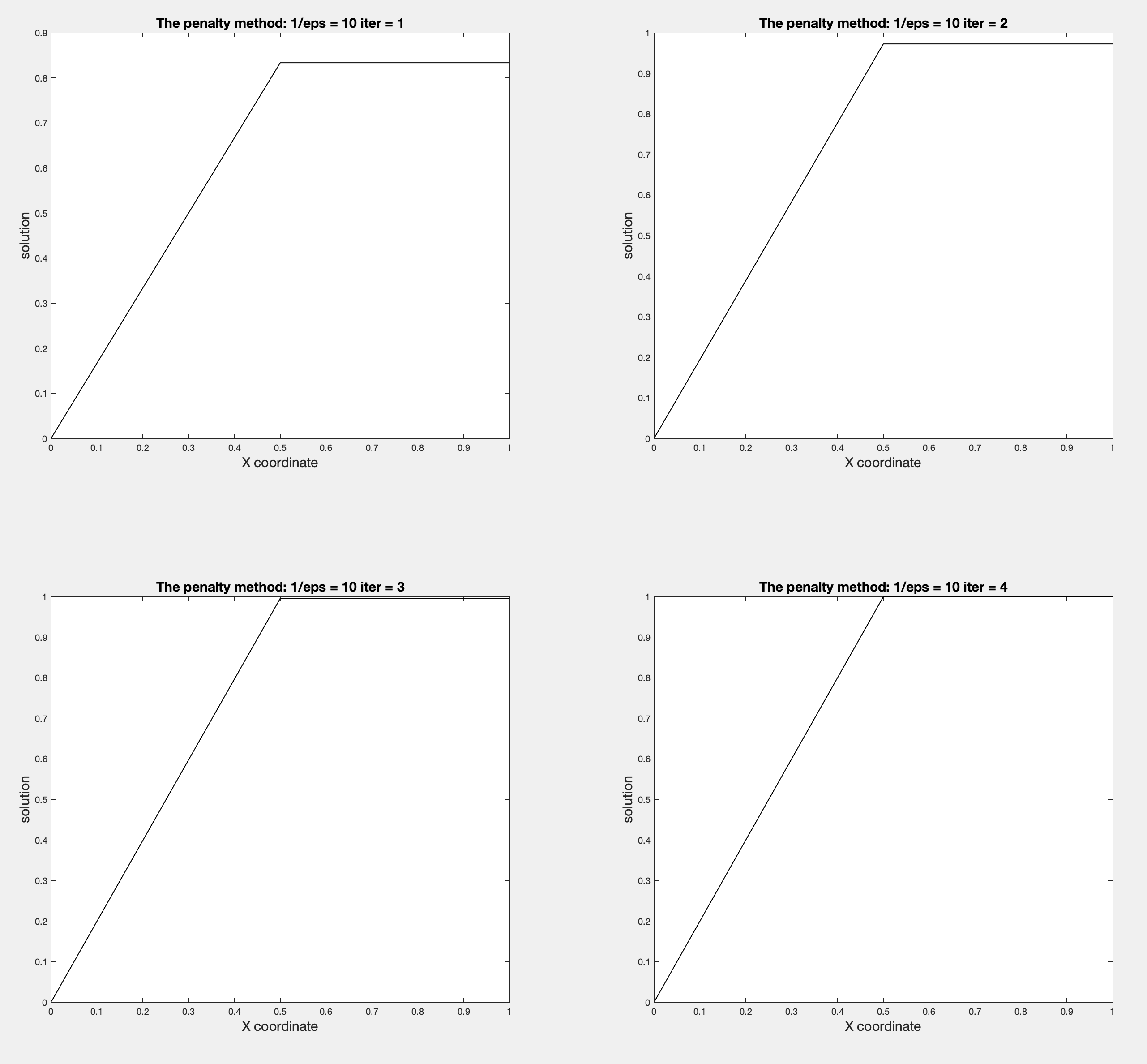} 
   \caption{Solution obtained with the penalty-duality strategy with $\varepsilon=.1$}
   \label{fig4}
\end{figure}
\begin{figure}[htbp] %  figure placement: here, top, bottom, or page
   \centering
   \includegraphics[width=11cm,height=7cm]{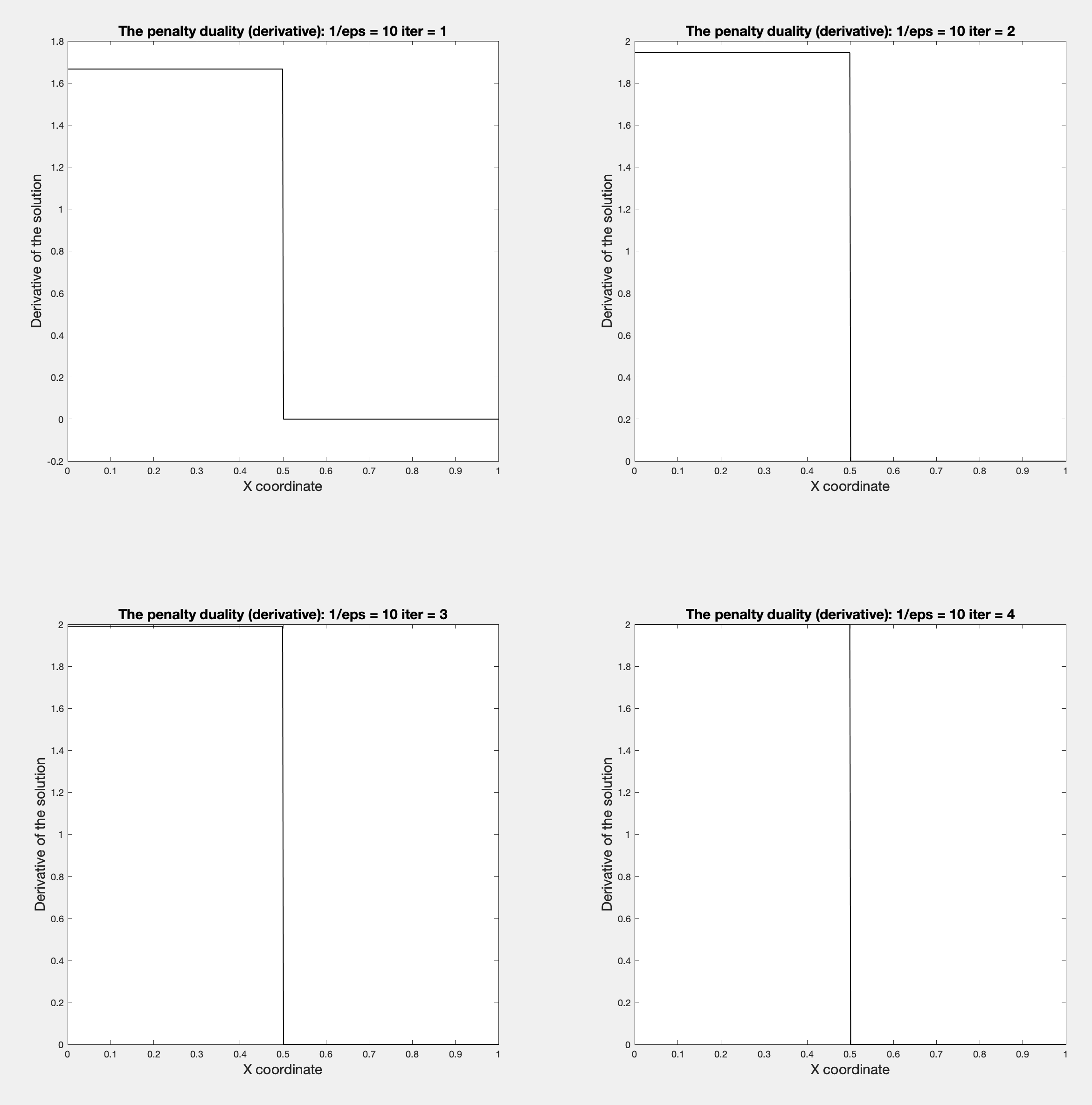} 
   \caption{Derivative of the solution obtained with the penalty-duality strategy with $\varepsilon=.1$ during the four first iterations}
   \label{fig5}
\end{figure}
%============
\section{Application to convection-diffusion equation}\label{advection} We consider two examples: the first one is the linear advection-diffusion model and the second one is the Burgers equation. Nevertheless it is worth to point out that the boundary of the structure are not updated..
\subsection{The linear advection diffusion model}\label{modeladvection} We consider the following $1D$ advection-diffusion equation where $u\in {\mathcal C}^0([0,T]\times[0,L])$ is the unknown (where $c>0$ $u_0(t)$ and $d(x)$ are given):
\beq\label{eq200bis}\left \{ \begin{array}{l}x\in]0,a[\cup]b,L[,\;t\geq 0:\;\fracj{\partial u}{\partial t}+c\fracj{\partial u}{\partial x}-\nu\fracj{\partial^2 u}{\partial x^2}=0,\\\\
t\geq0\;u(0,t)=u(L,t),\;\;x\in ]0,a[\cup]b,L[:\;u(x,0)=u_0(x),\\\\t\geq 0,\;x\in]a,b[:\;u(x,t)=d_0(t)\;(\hbox{ sine function}).\end{array}\right.\eeq
We denote by $\Delta x$ (respectively $\Delta t$) the space step (respectively the time step). The lower index indicate the space dicretization and the upper one the time discretization. The numerical scheme that we consider for this equation in $(]0,a[\cup]b,L[)\times ]0,T[$ leads to:
\beq\label{eq201}\left\{ \begin{array}{l}\forall i\in\{2,i_a\}\cup\{i_b,N-1\},\\\\\fracj{u^{n+1}_i-u^n_i}{\Delta t}+c(\fracj{ u_i^n-u^n_{i-1}}{\Delta x})-\nu(\fracj{ u^{n+1}_{i+1}-2u^{n+1}_{i}+u^{n+1}_{i-1}}{\Delta x^2})=0,\\\\
u_i^0=u_0(i\Delta x) \hbox{ initial conditions,}\\\\
\forall n\geq 0:\;u_1^n=u_N^n,\hbox{ boundary conditions, }\\\\\;u_{i_a}^n=u_{i_b}^n=d_0^n\;\hbox{ boundary of the structure},\\\\\hbox{ where }i_a\hbox{ and }i_b\;\hbox{ are the indices corresponding to $a$ and $b$}.
\end{array}\right.\eeq
The penalty-duality formulation is the following one ($\delta_{i,i_c}$ is the Kronecker symbol $=1$ if $i=i_c$ and $0$ else and we introduce a penalty term with the parameter $\gamma$ as before):
\beq\label{eq300}\left\{ \begin{array}{l}\forall i\in\{2,N-1\},\\\\\fracj{u^{n+1}_i-u^n_i}{\Delta t}+\fracj{\gamma}{\varepsilon}(u_i^{n+1}-d_0^{n+1})+c(\fracj{ u_i^n-u^n_{i-1}}{\Delta x})+r[(u_i^{n+1}-d_0^{n+1})\delta_{i,i_a}+\\\\(u_i^{n+1}-d_0^{n+1})\delta_{i,i_b}]-\nu(\fracj{ u^{n+1}_{i+1}-2u^{n+1}_{i}+u^{n+1}_{i-1}}{\Delta x^2})+\\\\\lambda^{n+1}(i_a)\delta_{i,i_a}+\lambda^{n+1}(i_b)\delta_{i,i_b}=0,\;\;
u_i^0=u_0(i\Delta x) \hbox{ initial conditions,}\\\\
\forall n\geq 0:\;u_1^n=u_N^n\hbox{ and }u_{-1}^n=u_{N-1}^n\hbox{ periodic boundary conditions, }\\\\\forall n\geq 0:\;u_{i_a}^n=u_{i_b}^n=d_0^n\;\hbox{ boundary of the structure}.
\end{array}\right.\eeq
For each $n$ the duality algorithm (the iterations are indexed by $p$) is defined as follows:
\begin{itemize}[\#]
\item $p=0,\;\lambda_{i_a}^0=\lambda_{i_b}^0=0$;
\item compute $u^{n+1}_i$ solution of (\ref{eq300}) $\lambda^p=(\lambda^p_{i_a},\lambda^p_{i_b})$ being fixed;
\item set: $\lambda^{p+1}=\lambda^p+r(u_{i_a}^n-d_0^n,u_{i_b}^n-d_0^n)$
\item convergence test => stop or $p=p+1$
\end{itemize}
\subsection{The numerical tests for the advection-diffusion (linear)}\label{test3} We have tested the two possibilities: a) no duality and b) with duality. The results are plotted on Figures \ref{fig10} and \ref{fig11} for the first case and on Figures \ref{fig12} and \ref{fig13} for the case with duality. We choose the following data set: 
\begin{itemize}[\#]
\item the kinematical viscosity is $\nu=0.001$; 
\item the coefficient of the Bertsekas algorithm is $r=10$; 
\item the length of the space interval is $L=1$ and the length of the structure is  $L/10$ and the coordinates of the two extremities are denoted by $x_a=.45L,\;x_b=.55L$; 
\item the time dependence of the prescribed velocity of the structure is ${u}_{st}(t)=\sin(2\pi t)$ and $c=1$ for the advection-diffusion case. The space dependence in the structure is (just for an example):
$$u_{sx}(x)= 0.4+2(2x-x_a-x_b)/L.$$
 Hence the velocity prescribed in the structure is $u_{st}(t)u_{sx}(x)$;
\item the initial condition outside the structure is (still for example):

$$\begin{array}{l}u(x,0)=4.4(x_a-x-L/5) \hbox{ if } x\in [0,x_a-L/5],\\u(x,0)=4.4(x-x_b-L/5)\hbox{ if }x\in [x_b+L/5,L]\;\hbox{ and $u(x,0)=0$ elsewhere;}\end{array}$$
 \item the penalty parameter used for the periodic conditions at $x=0$ and $x=L$ but also for the penalty term inside the structure is $\varepsilon=10^{-3}$;
 \item  the time delay is $T=2$ and there are $1000$ time steps and 500 space steps.
 \end{itemize}
  There is a meaningful difference between the two cases and mainly on the derivatives with respect to $x$ if $\varepsilon>10^{-8}$.  For $\varepsilon<10^{-8}$ the penalized solution is quite the one obtained with the penalty-duality method. In other words the duality algorithm is not useful. Nevertheless,  for such values of the penalty parameter the solver requires a large enough viscosity which is not so drastic for larger values of $\varepsilon$. Furthermore the one dimensional test aren't meaningful concerning the condition number of two or three dimensional models. Our goal in this paper is only to discuss the advantages and the drawbacks of this added duality algorithm for fluid-structure models from a theoretical point of view.
{Figures \ref{fig1011} show the solution (ad) (Figure \ref{fig10}) and  the derivative  versus $x$ of the solution (ad) (Figure \ref{fig11} obtained without neither penalty inside the structure nor duality, but with a penalty term on the boundary between the structure and the fluid. The structure is moving at a given frequency (2 periods have  been plotted). This induces waves in the fluid. One should notice on Figure \ref{fig11} that the propagation of stress waves is very damped in the fluid. The solutions obtained with the duality method are plotted on Figure \ref{fig1213}. The normal stress at the two boundaries versus time, with and without duality are plotted Figure \ref{fig1112bis}.

\begin{figure}[!htbp]
\subfigure[\label{fig10} Solution ]{\includegraphics[width=5.5cm,height=5.5cm]{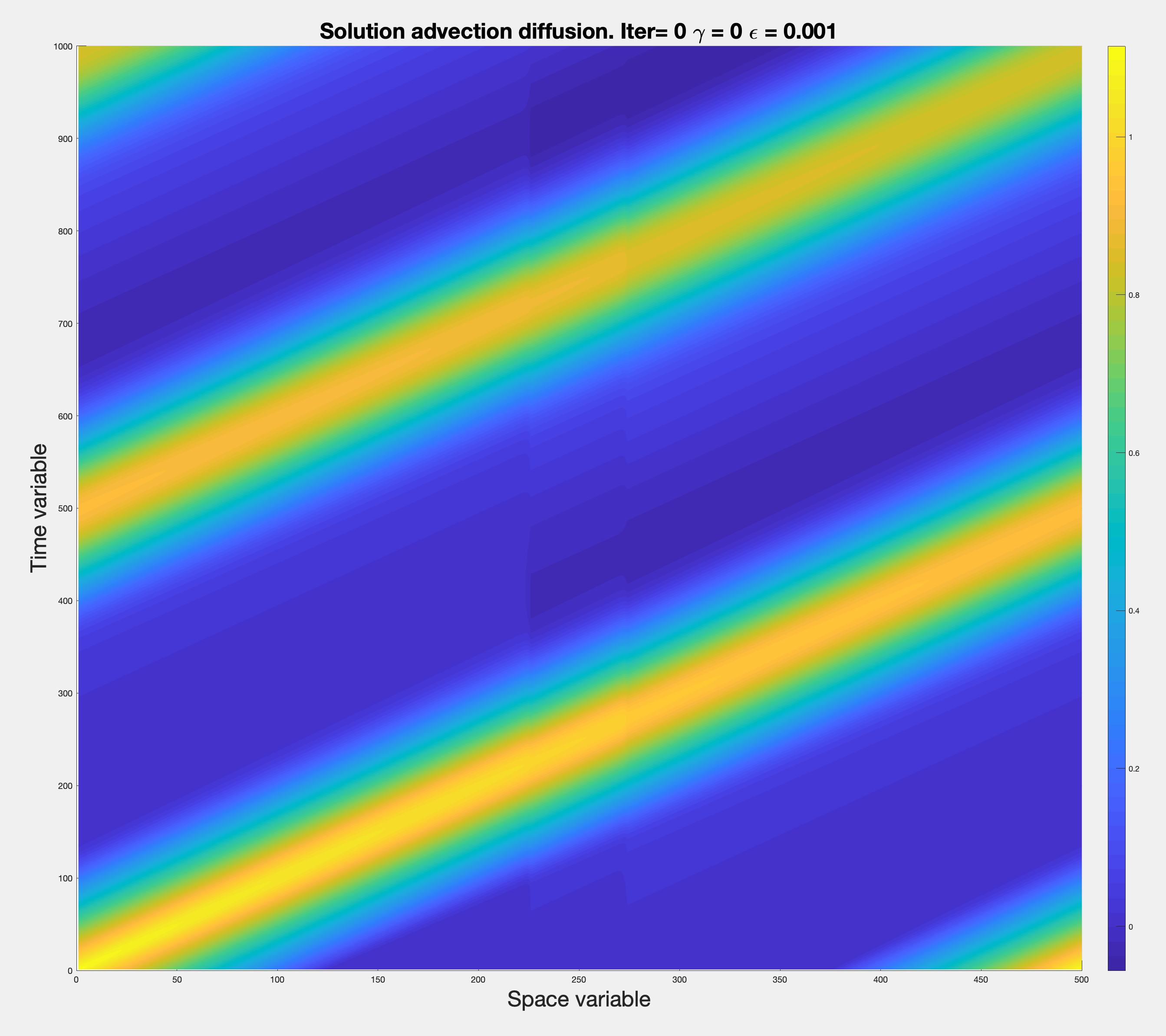} }
\subfigure[\label{fig11} The derivative  versus $x$ of the solution (ad)  ]{\includegraphics[width=5.5cm,height=5.5cm]{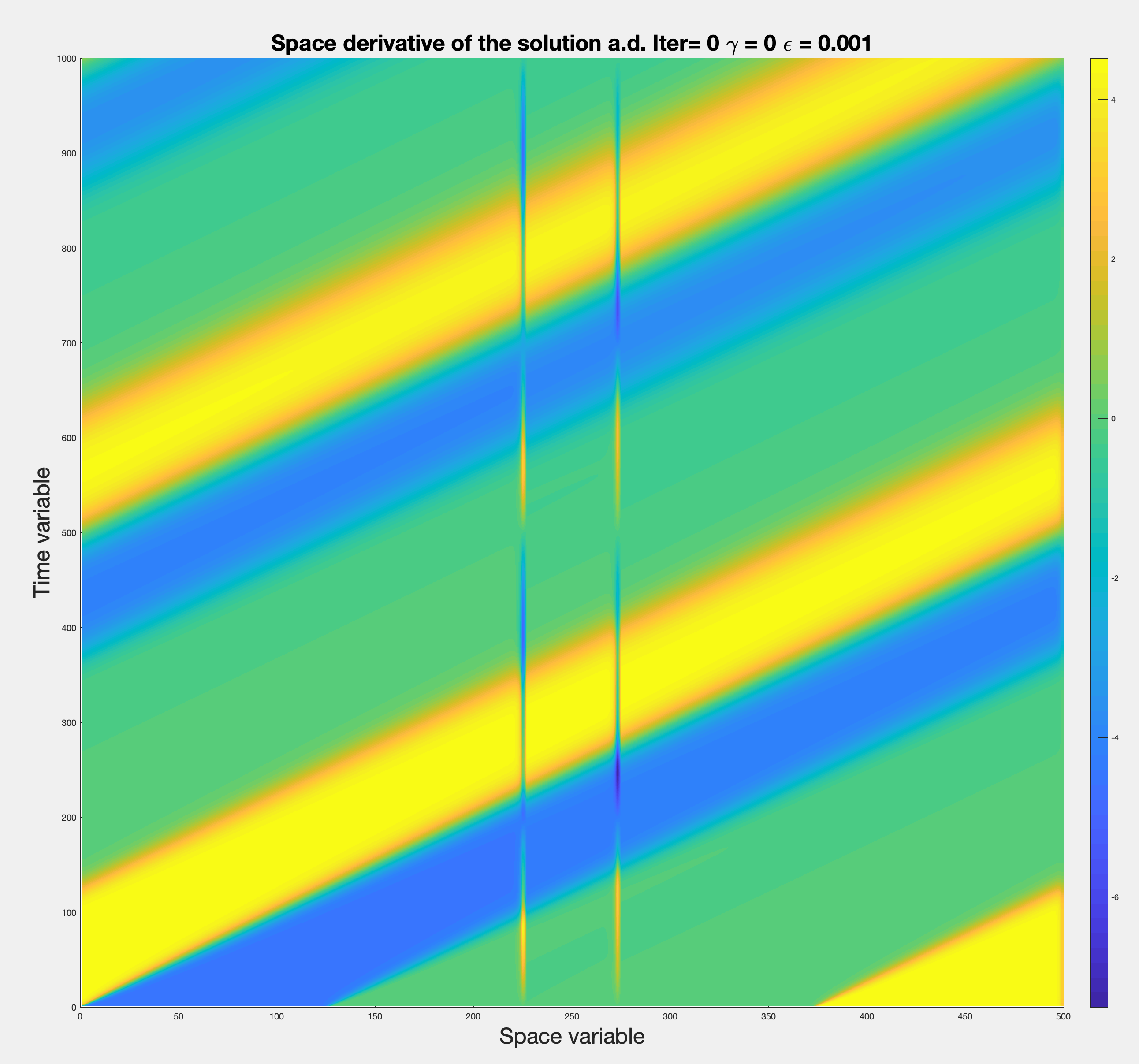}}
\caption{Solution (ad) obtained without neither penalty inside the structure nor duality. But there is a penalty term on the boundary between the structure and the fluid.\label{fig1011}}
\end{figure}

%\begin{figure}[htbp] %  figure placement: here, top, bottom, or page
%   \centering
%   \includegraphics[width=10cm,height=8cm]{ad1.png} 
%   \caption{Solution (ad) obtained without neither penalty inside the structure nor duality. But there is a penalty term on the boundary between the structure and the fluid. The structure is moving at a given frequency (2 periods have  been plotted. This induces waves in the fluid. }
%   \label{fig10}
%\end{figure}
%
%\begin{figure}[htbp] %  figure placement: here, top, bottom, or page
%   \centering
%   \includegraphics[width=10cm,height=8cm]{ad2.png} 
%   \caption{The derivative  versus $x$ of the solution (ad) without neither duality nor penalty inside the structure  (same case as on Figure \ref{fig10}. But there is a penalty term on the boundary between the structure and the fluid. One should notice that the propagation of stress waves is very damped in the fluid.}
%   \label{fig11}
%\end{figure}
\begin{figure}[!htbp]
\subfigure[\label{fig12} Solution (ad)]{\includegraphics[width=5.5cm,height=5.5cm]{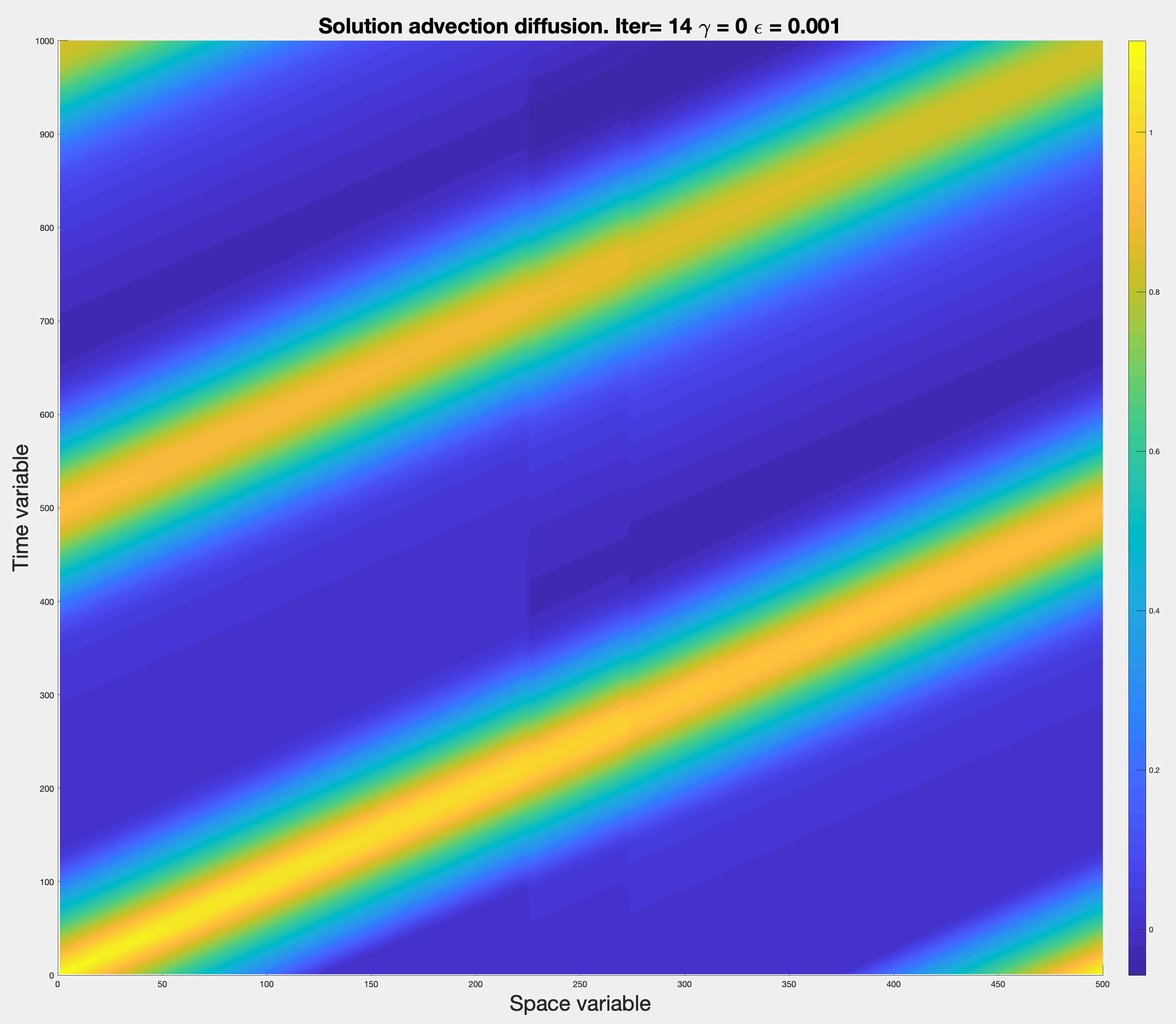} }
\subfigure[\label{fig13} The derivative  versus $x$ of the solution (ad)]{\includegraphics[width=5.5cm,height=5.5cm]{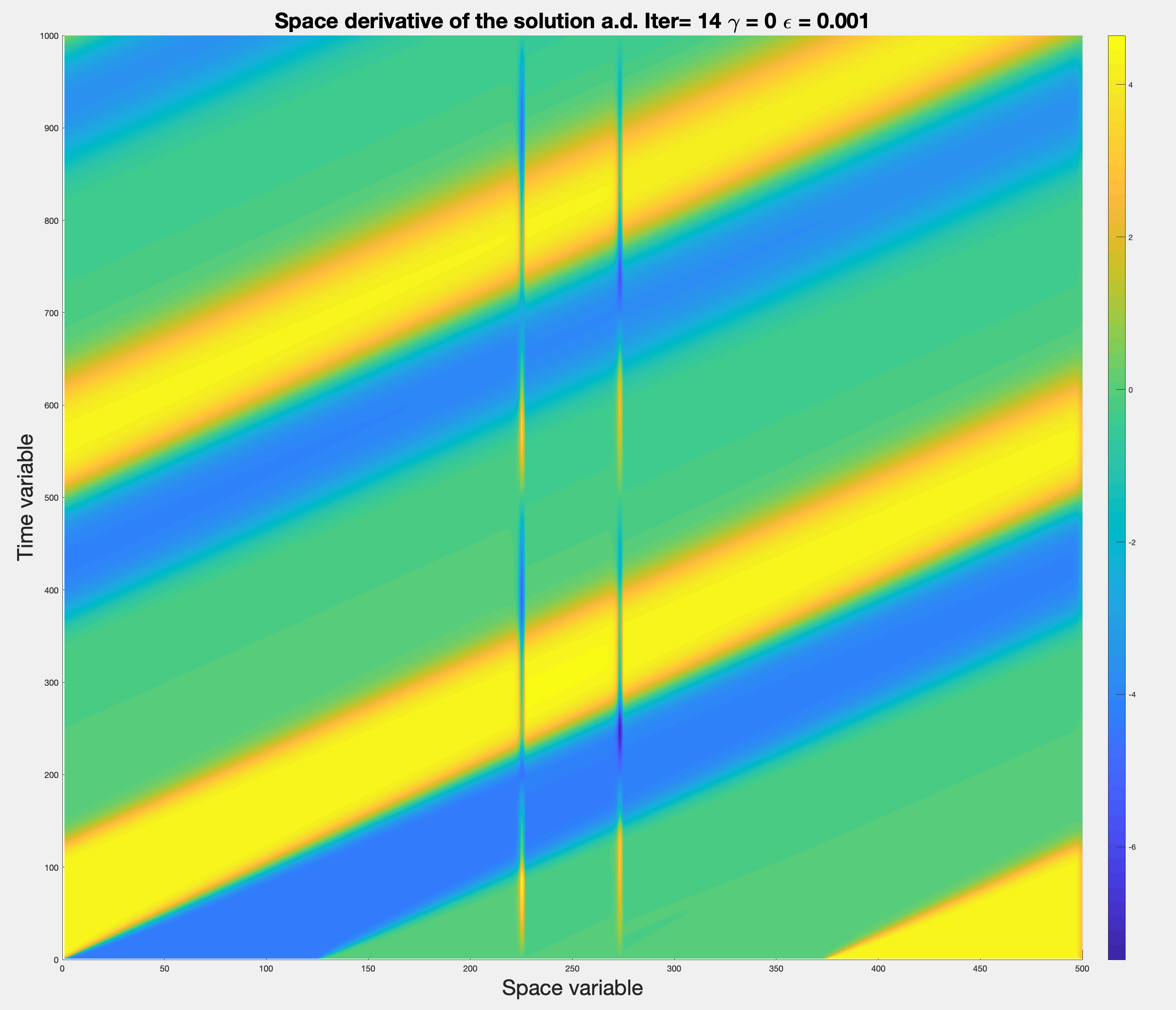} }
\caption{ Solution (ad) obtained with the duality method on the boundary of the structure but no penalty term inside the structure. \label{fig1213} }
\end{figure}

%\begin{figure}[htbp] %  figure placement: here, top, bottom, or page
%   \centering
%   \includegraphics[width=10cm,height=8cm]{ad4.png} 
%   \caption{Solution (ad) obtained with the duality method on the boundary of the structure but no penalty term inside the structure. This picture should be compared to Figure \ref{fig10}.}
%      \label{fig12}
%\end{figure}
%
%\begin{figure}[htbp] %  figure placement: here, top, bottom, or page
%   \centering
%   \includegraphics[width=10cm,height=8cm]{ad5.png} 
%   \caption{Derivative versus $x$ of the solution (ad) obtained with duality but no penalty term inside the structure.Should be compared to Figure \ref{fig11}.}
%   \label{fig13}
%\end{figure}

%
\begin{figure}[!htbp]
\subfigure[\label{fig11bis}without duality ]{\includegraphics[width=5.5cm,height=5.5cm]{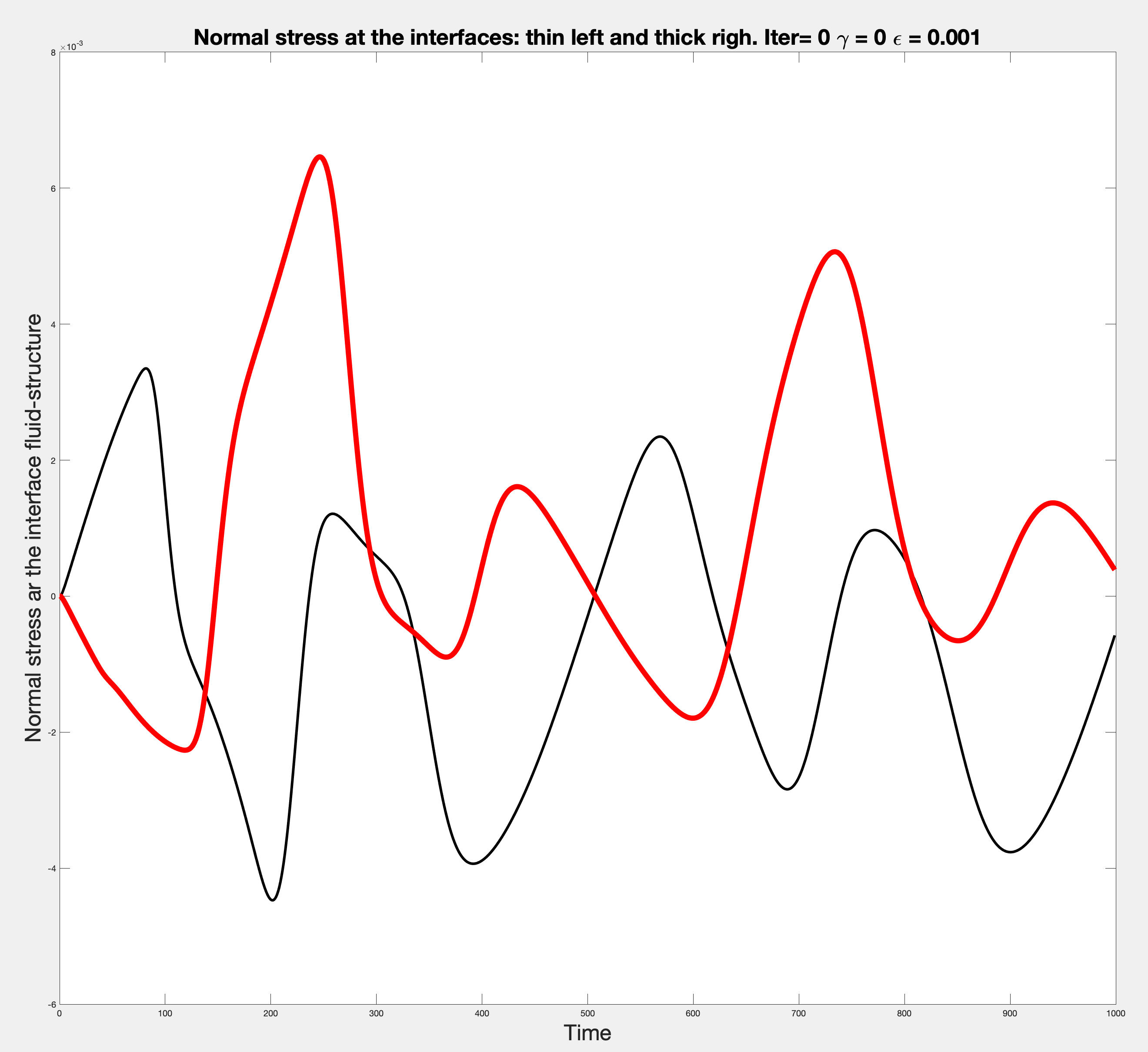}}
\subfigure[\label{fig12bis} with duality]{\includegraphics[width=5.5cm,height=5.5cm]{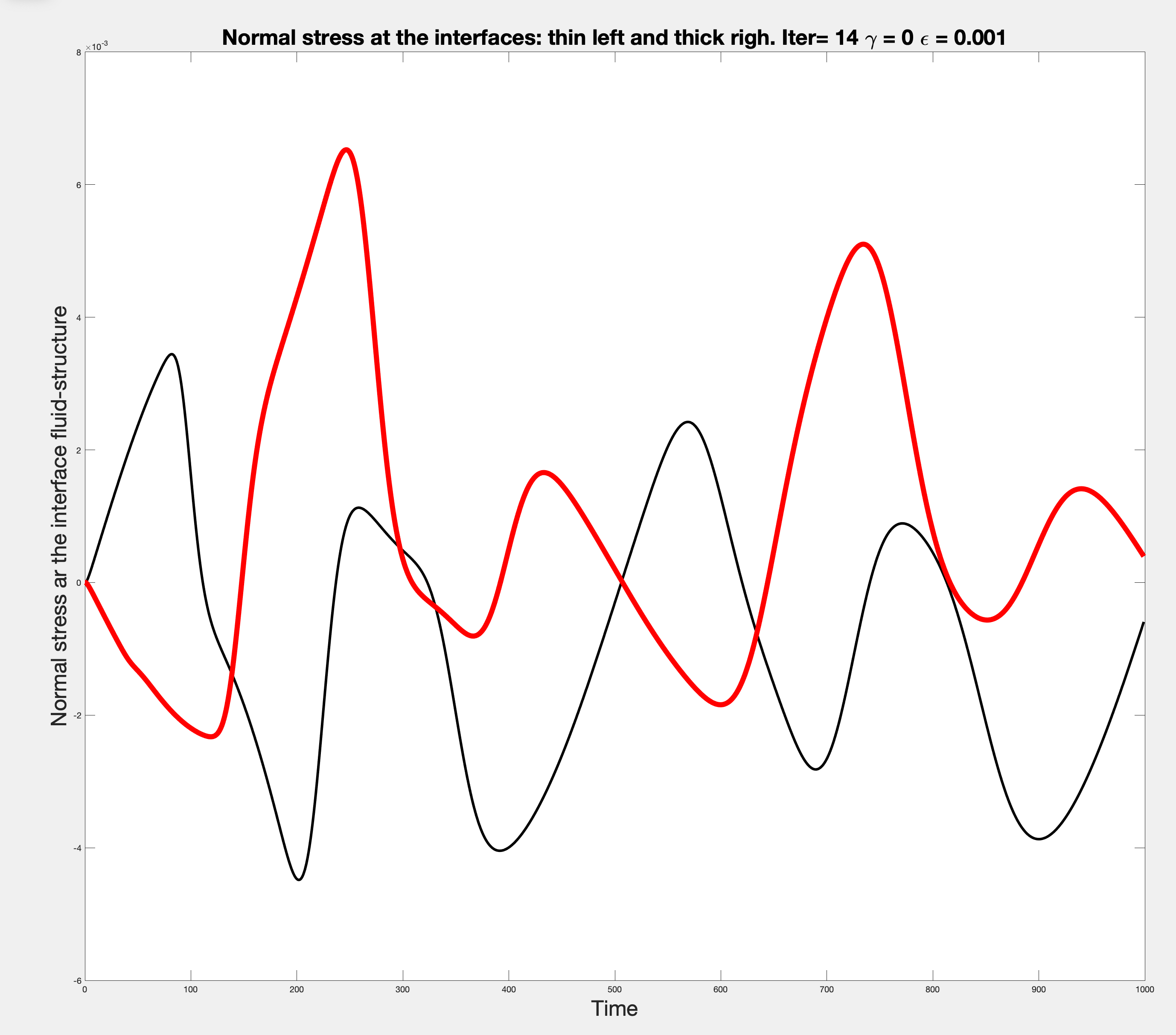}}
 \caption{The normal stress at the two boundaries of the structure versus the time \label{fig1112bis}}
\end{figure}

%\begin{figure}[htbp] %  figure placement: here, top, bottom, or page
%   \centering
%   \includegraphics[width=10cm,height=8cm]{ad3.png} 
%   \caption{The normal stress at the two boundaries of the structure versus the time are plotted on this Figure fo the case without neither duality nor penalty inside the structure. But there is a penalty term on the boundary between the structure and the fluid.}
%   \label{fig11bis}
%\end{figure}
%

%
%\begin{figure}[htbp] %  figure placement: here, top, bottom, or page
%   \centering
%   \includegraphics[width=10cm,height=8cm]{ad6.png} 
%   \caption{This Figure represents the normal stress at the interfaces between the structure and the fluid. The duality has been used and It should be compared with Figure \ref{fig11bis}}
%   \label{fig12bis}
%\end{figure}
%

{Next, the penalty term inside the structure is added. Result with and without duality are plotted on Figures \ref{fig1314} and \ref{fig1516}. The duality algorithm does not affect the normal stress at the fluid-solid interface as far as $\varepsilon$ is very small  (Figure \ref{fig1718}) but the use of the penalty term makes an important difference.}
\begin{figure}[!htbp]
\subfigure[\label{fig13bis} Solution ]{\includegraphics[width=5.5cm,height=5.5cm]{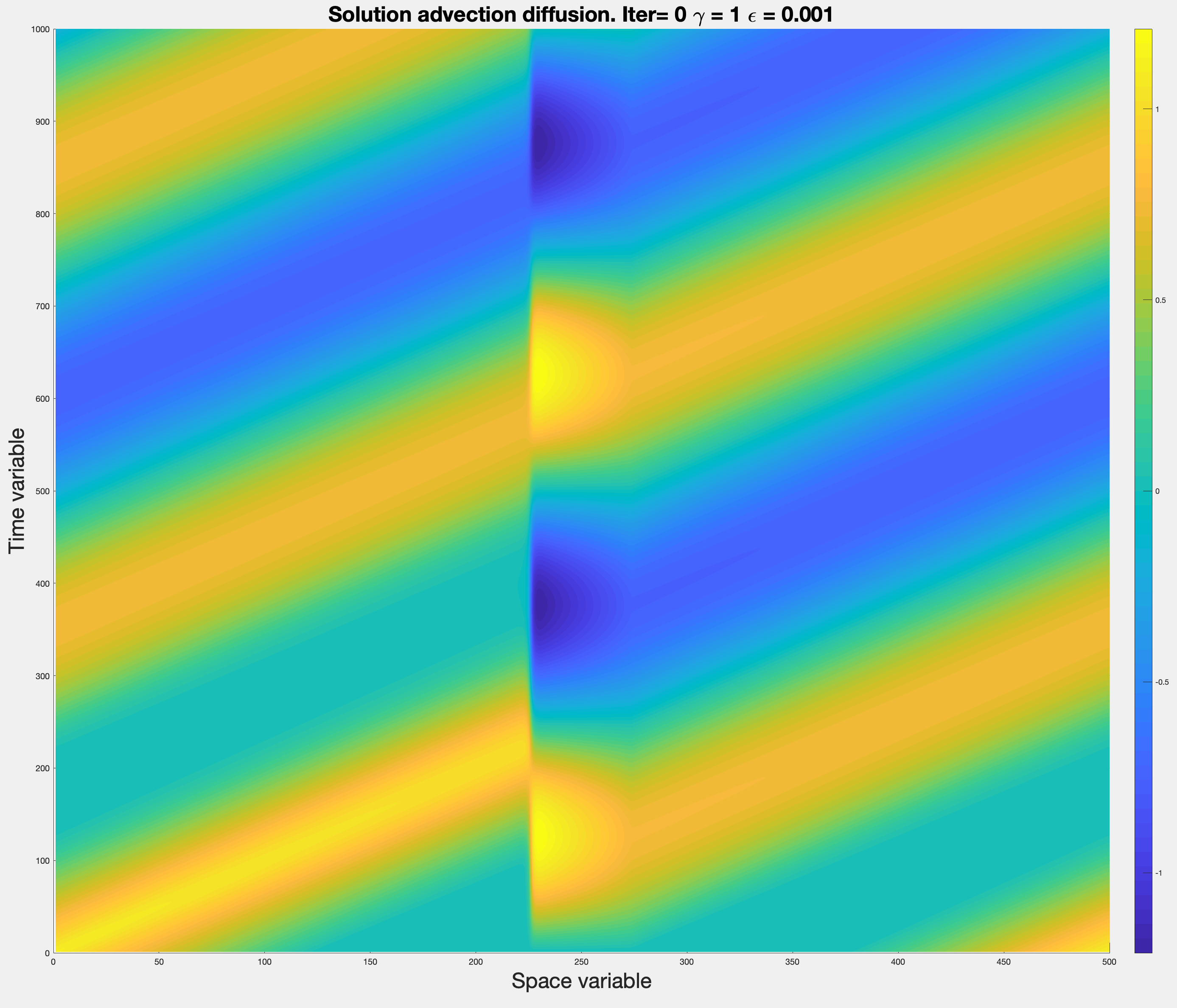} }
\subfigure[\label{fig14} Space derivative of the solution of the advection-convection equation]{\includegraphics[width=5.5cm,height=5.5cm]{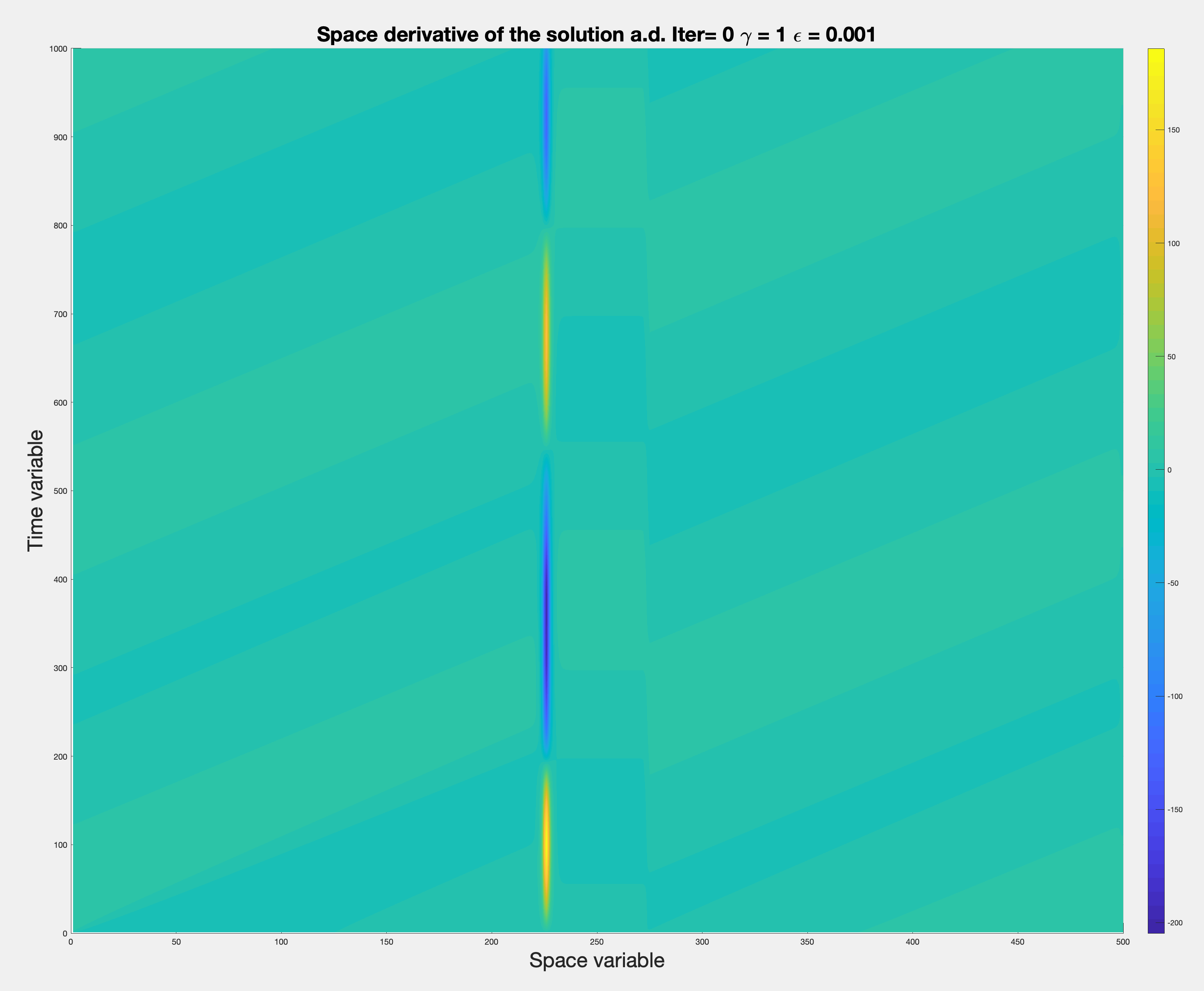} }
\caption{Solution of the advection-convection equation with two penalty terms: one on the boundary between the fluid and the structure and the other inside the structure. But no duality iteration have been used. \label{fig1314} }
\end{figure}

\begin{figure}[!htbp]
\subfigure[\label{fig15} Solution ]{\includegraphics[width=5.5cm,height=5.5cm]{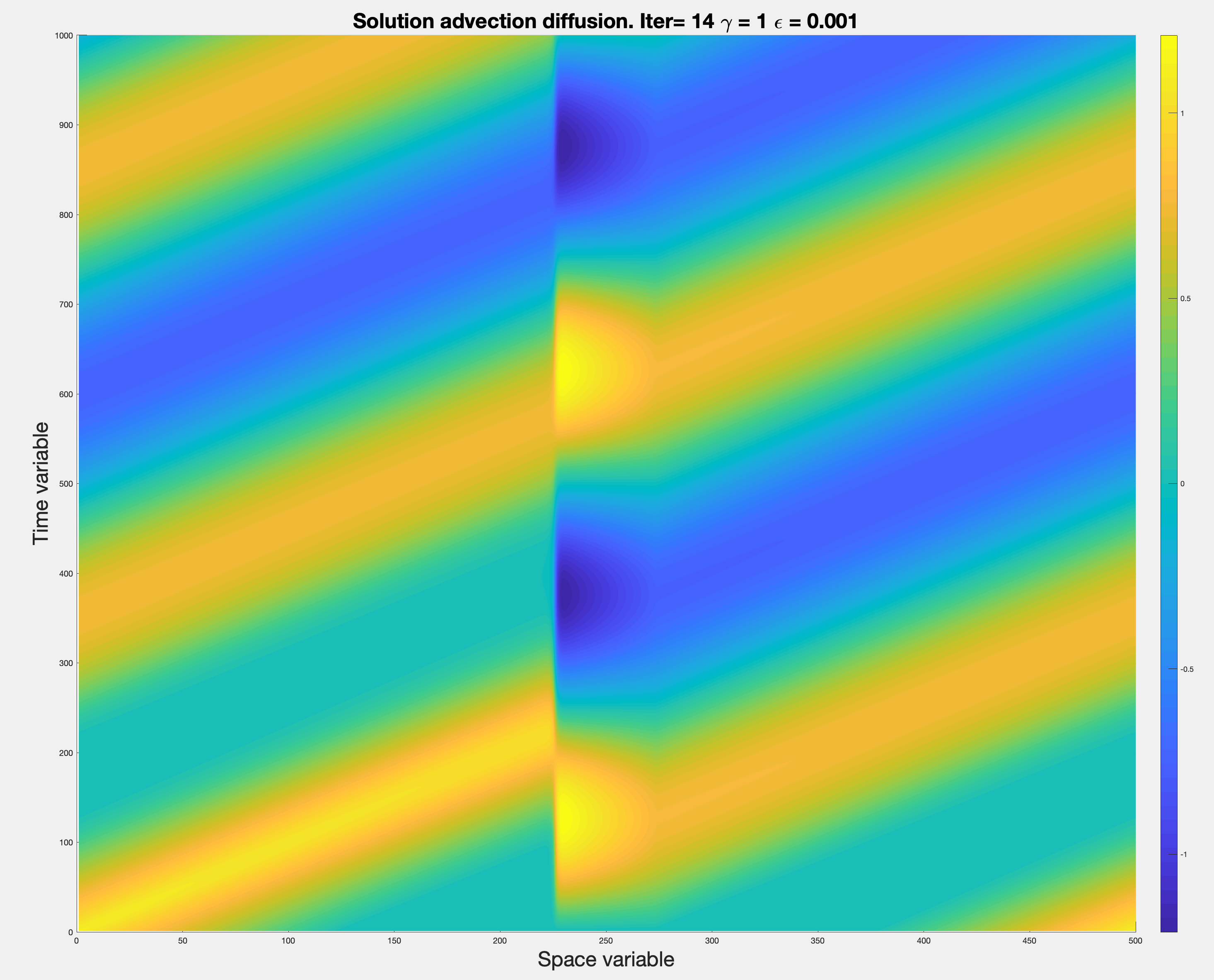} }
\subfigure[\label{fig16} Space derivative of the solution of the advection-convection equation]{\includegraphics[width=5.5cm,height=5.5cm]{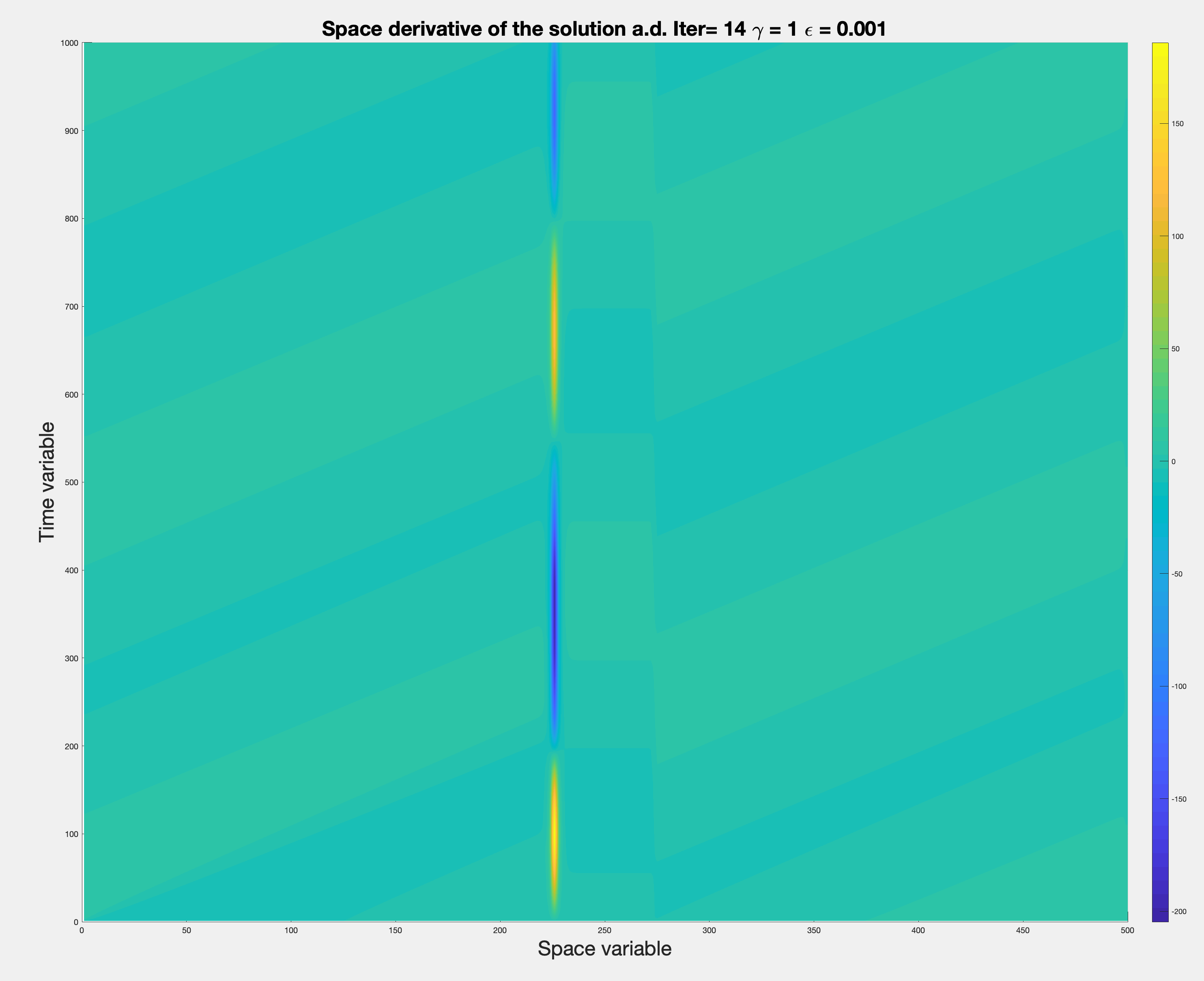} }
\caption{solution of the advection-convection equation with the two penalty terms and the duality algorithm.\label{fig1516} }
\end{figure}

\begin{figure}[!htbp]
\subfigure[\label{fig17} no duality  ]{\includegraphics[width=5.5cm,height=5.5cm]{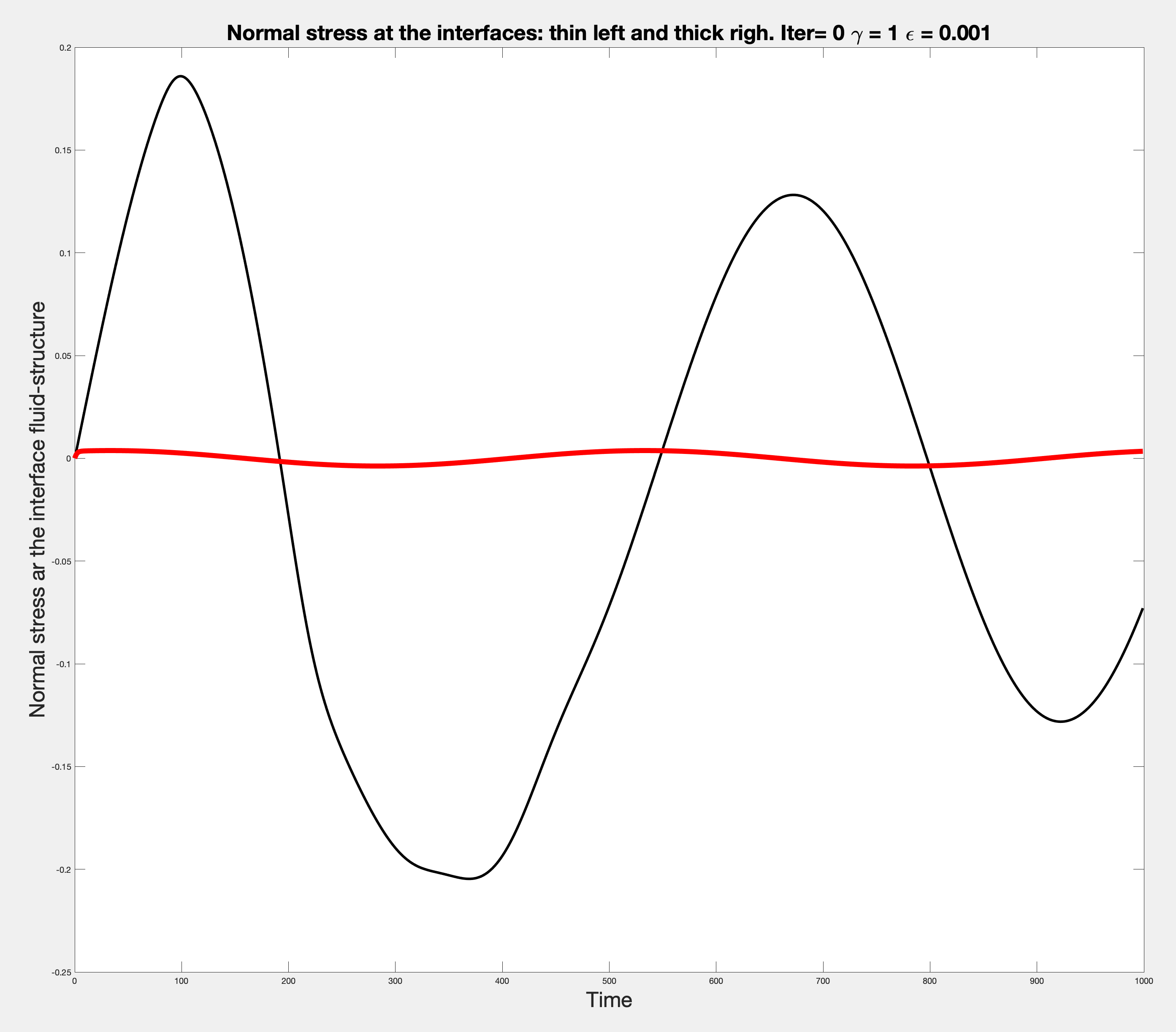} }
\subfigure[\label{fig18} duality ]{\includegraphics[width=5.5cm,height=5.5cm]{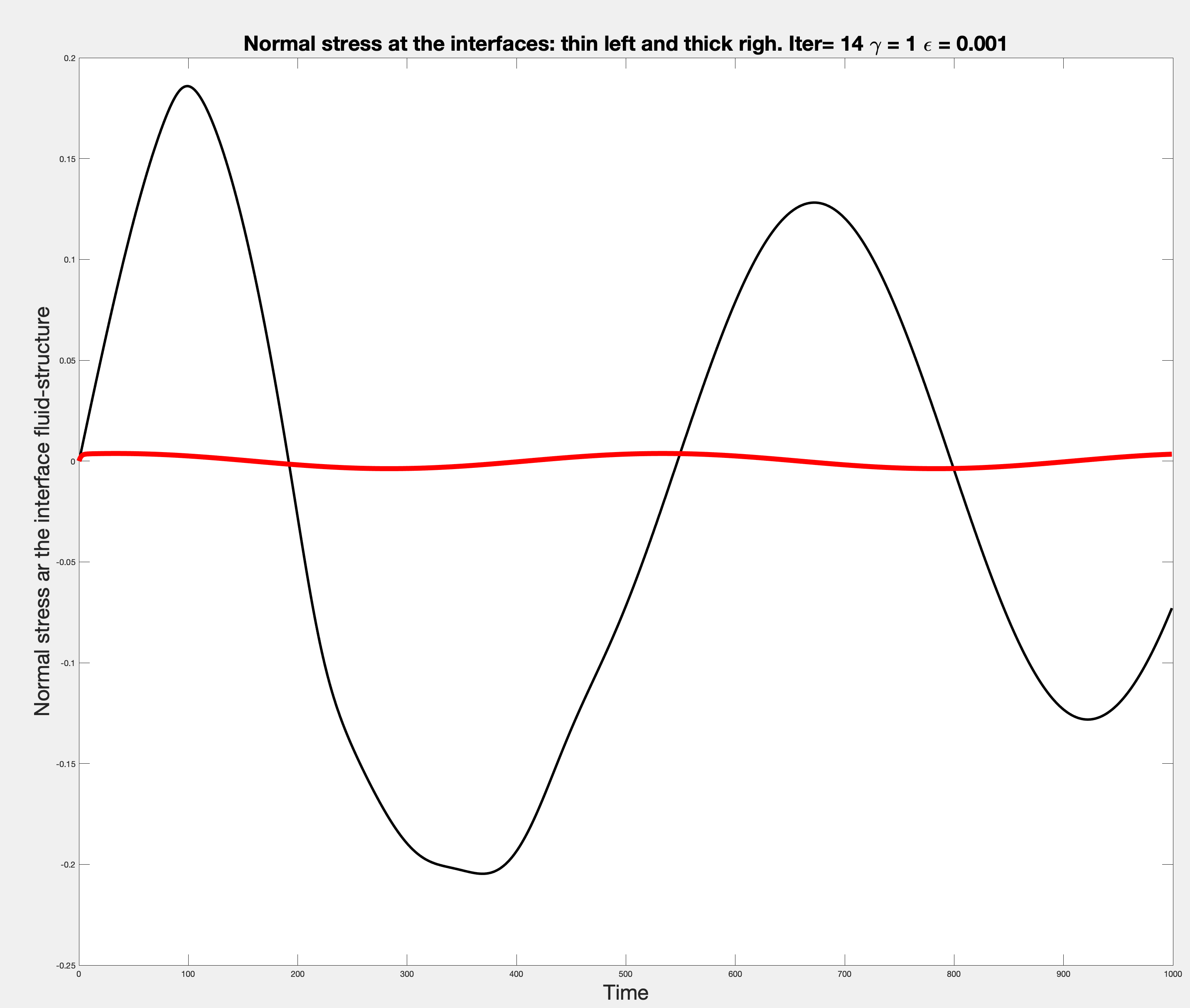} }
\caption{This Figure represents the normal stress at the f-s interfaces; the two penalty terms have been used. The difference with Figure \ref{fig1112bis} (no penalty inside the structure)  can be noticed. \label{fig1718} }
\end{figure}

\subsection{A non linear case (Burgers)}\label{test4} In this subsection we consider the following non linear model (Burgers equation) but the boundary of the structure aren't updated. The scheme used is similar to the one used  is the linear case  (fully implicit for the diffusion and Godounov scheme for the transport) and the data are also the same concerning the dimensions and the initial values. 
\beq\label{eq500bis} \left \{ \begin{array}{l}x\in]0,a[\cup]b,L[,\;t\geq 0:\;\fracj{\partial u}{\partial t}+\fracj{\partial}{\partial x}(\fracj{u^2}{2})-\nu\fracj{\partial^2 u}{\partial x^2}=0;\\\\
t\geq0\;u(0,t)=u(L,t),\;\;x\in ]0,a[\cup]b,L[:\;u(x,0)=u_0(x)\\\\t\geq 0,\;x\in]a,b[:\;u(x,t)=d_0(t) \;(\hbox{ sine function}).\end{array}\right.\eeq
\subsubsection{The numerical results} First of all let us point out that the results should be compared with the quasi-exact solution obtained with a very small value of the penalty parameter $\varepsilon$ but which requires some precautions in the adjustment of the numerical tests (ill conditioning). The  quasi-exact solution and its space derivatives are plotted on Figures \ref{fig215}-\ref{fig216}-\ref{fig217}-\ref{fig218}.

The first results have been plotted on Figures \ref{fig201}-\ref{fig202}-\ref{fig203} for the case where there is no penalty term inside the structure and no duality. But there is a moderate penalty term at the f-s interfaces. If we compare to the quasi-exact solution plotted on Figure \ref{fig215} one can see a big difference. The same is true (and even amplified for the space derivatives (see Figures \ref{fig202}-\ref{fig203} compared to Figures \ref{fig215}-\ref{fig217}).

On the Figures \ref{fig204}-\ref{fig205}-\ref{fig206}-\ref{fig207} we have plotted the solution of Burgers equation obtained with the penalty-duality algorithm but without penalty term ($L^2$-norm between the velocity obtained by the scheme and the one prescribed in the structure). The results compared with the quasi-exact solution are much better but not satisfying.

The penalty term inside the structure without duality has been added in the computations shown on Figures \ref{fig208}-\ref{fig209}-\ref{fig210}. The results are closer to the quasi-exact one of Figures \ref{fig215}-\ref{fig216}-\ref{fig217}. The improvement with adding the duality algorithm is small for small values of the penalty parameter as one can see on Figures \ref{fig211}-\ref{fig212}-\ref{fig213}-\ref{fig214} (compared to the quasi-exact solution), but exists.
\begin{figure}[!htbp]
\subfigure[Solution (Burgers) \label{fig201}]{\includegraphics[width=5.5cm,height=5.5cm]{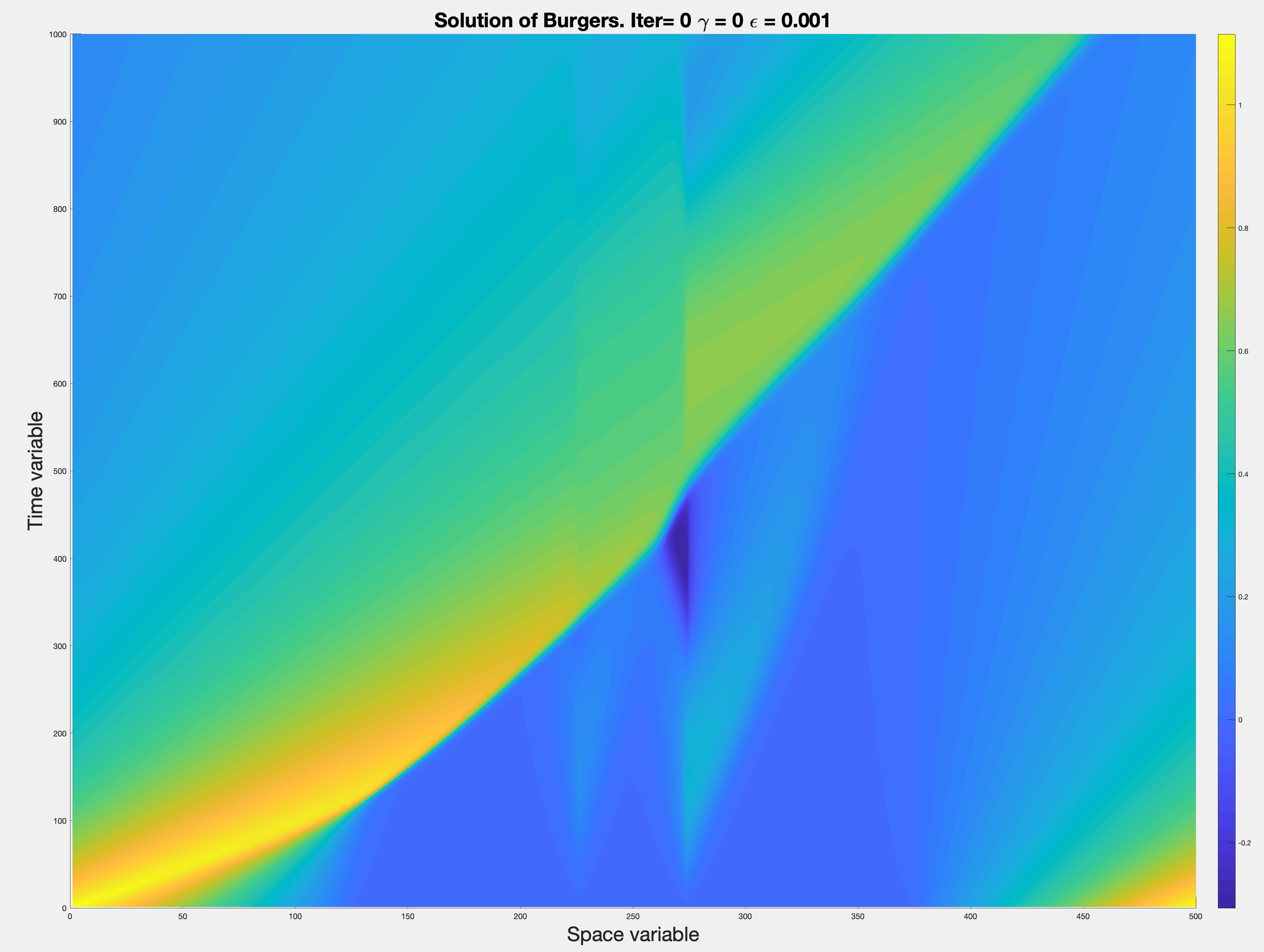}}
\subfigure[Derivative versus $x$ of the solution (Burgers)\label{fig202}]{\includegraphics[width=5.5cm,height=5.5cm]{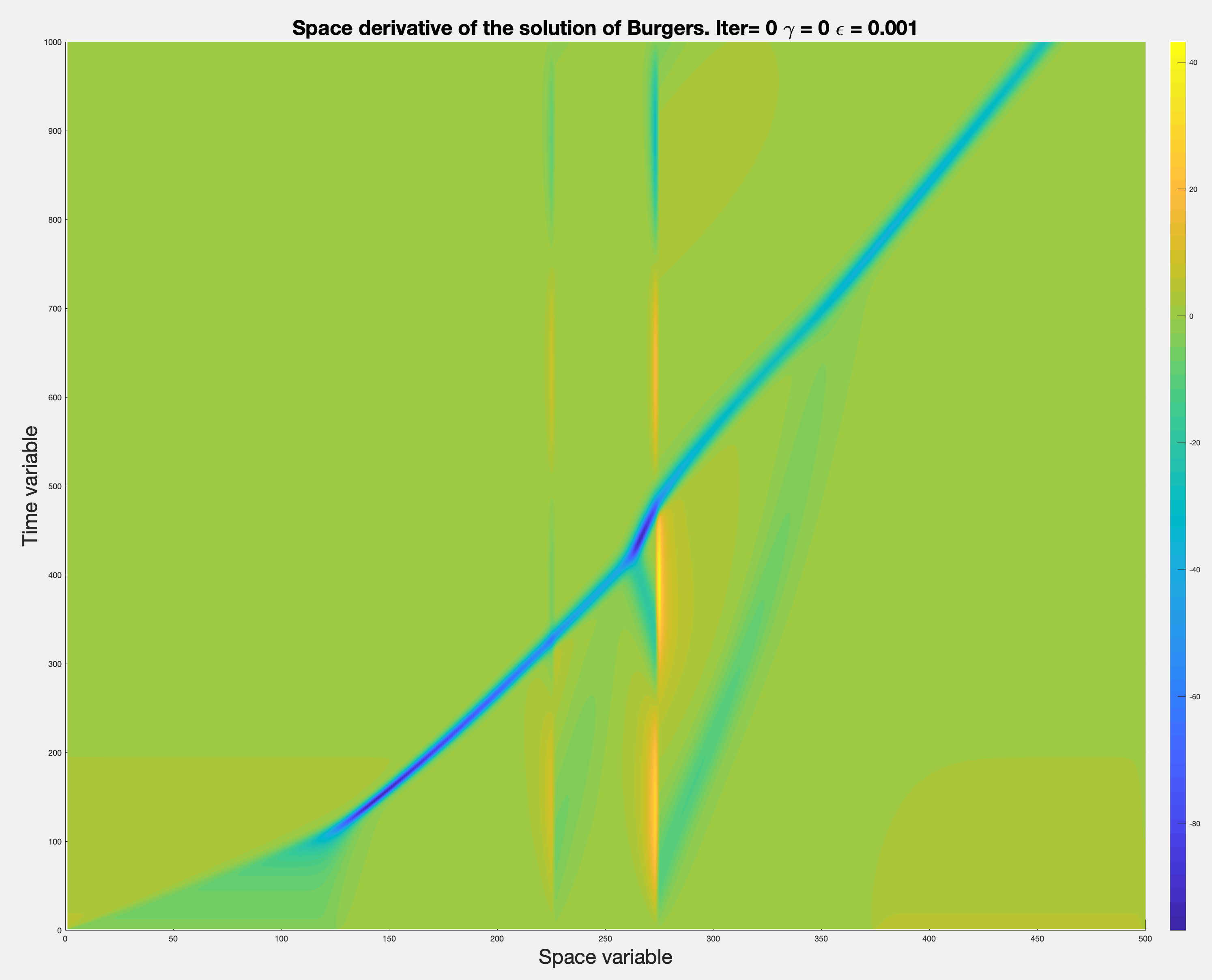}}
\caption{Solution (Burgers) without duality and no penalty term inside the structure (same data as for the linear case) but $r=.1$ \label{fig201202}}
\end{figure}

%\begin{figure}[htbp] %  figure placement: here, top, bottom, or page
%   \centering
%   \includegraphics[width=11cm,height=9cm]{burgers1.png} 
%   \caption{Solution (Burgers) without duality and no penalty term inside the structure (same data as for the linear case) but $r=.1$}
%   \label{fig201}
%\end{figure}
%
%\begin{figure}[htbp] %  figure placement: here, top, bottom, or page
%   \centering
%   \includegraphics[width=11cm,height=9cm]{burgers2.png} 
%   \caption{Derivative versus $x$ of the solution (Burgers) without duality and no penalty term inside the structure.}
%   \label{fig202}
%\end{figure}
%
\begin{figure}[htbp] %  figure placement: here, top, bottom, or page
   \centering
   \includegraphics[width=11cm,height=8.5cm]{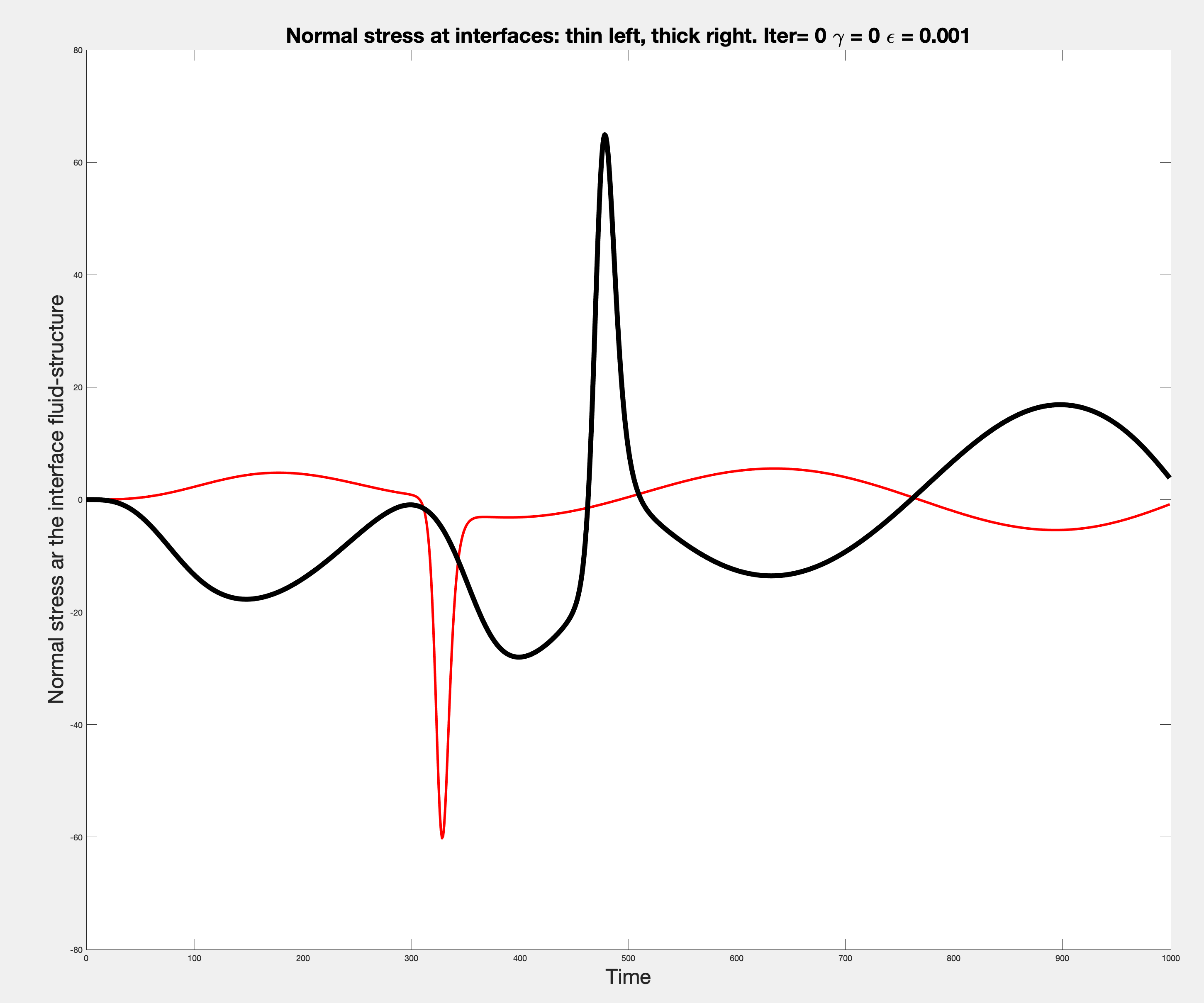} 
   \caption{Normal stress at the f-s interface for the Burgers solution without duality and no penalty term inside the structure (same data as for the linear case)}
   \label{fig203}
\end{figure}
\begin{figure}[!htbp]
\subfigure[Solution \label{fig204}]{\includegraphics[width=5.5cm,height=5.5cm]{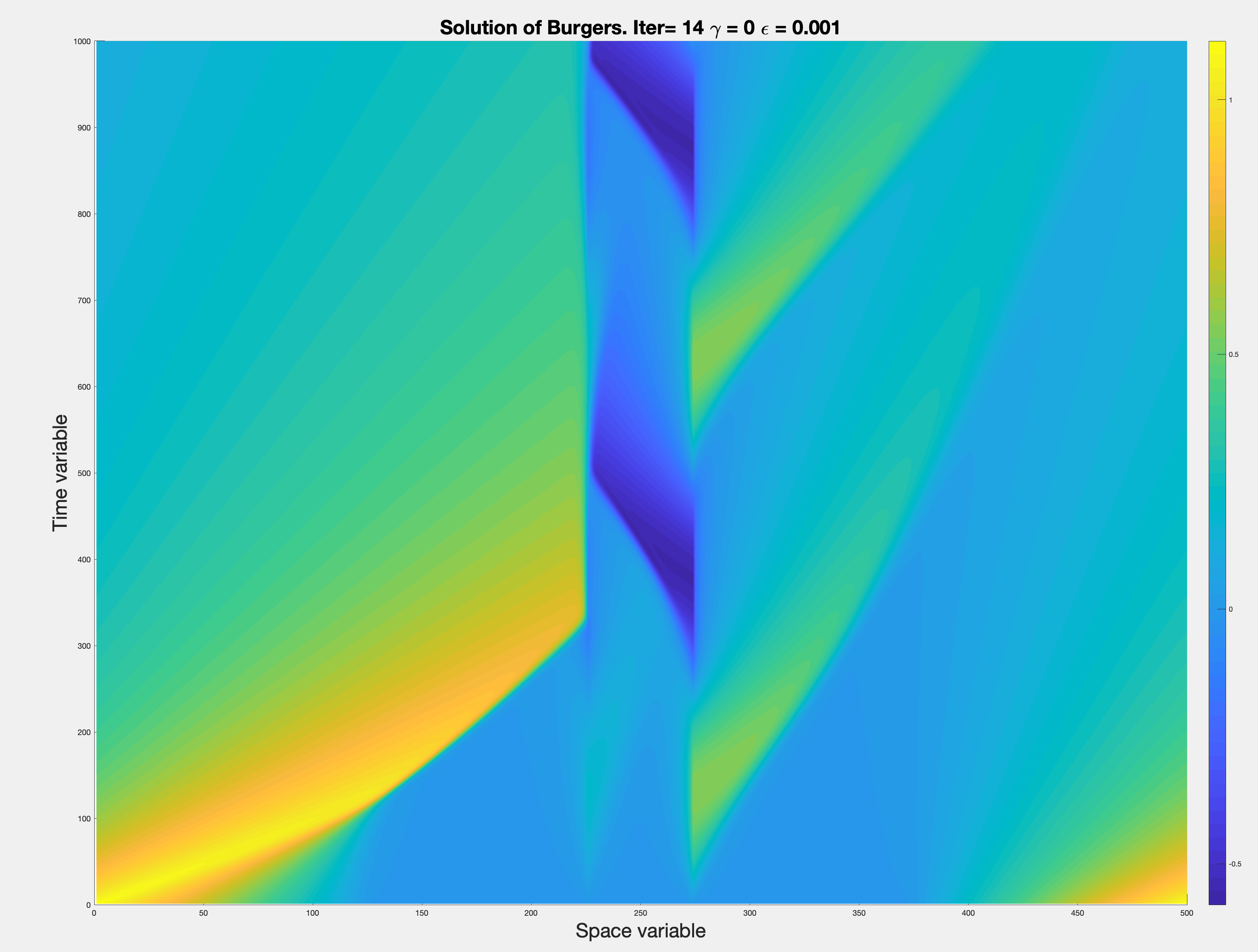}}
\subfigure[Derivative versus $x$ \label{fig205}]{\includegraphics[width=5.5cm,height=5.5cm]{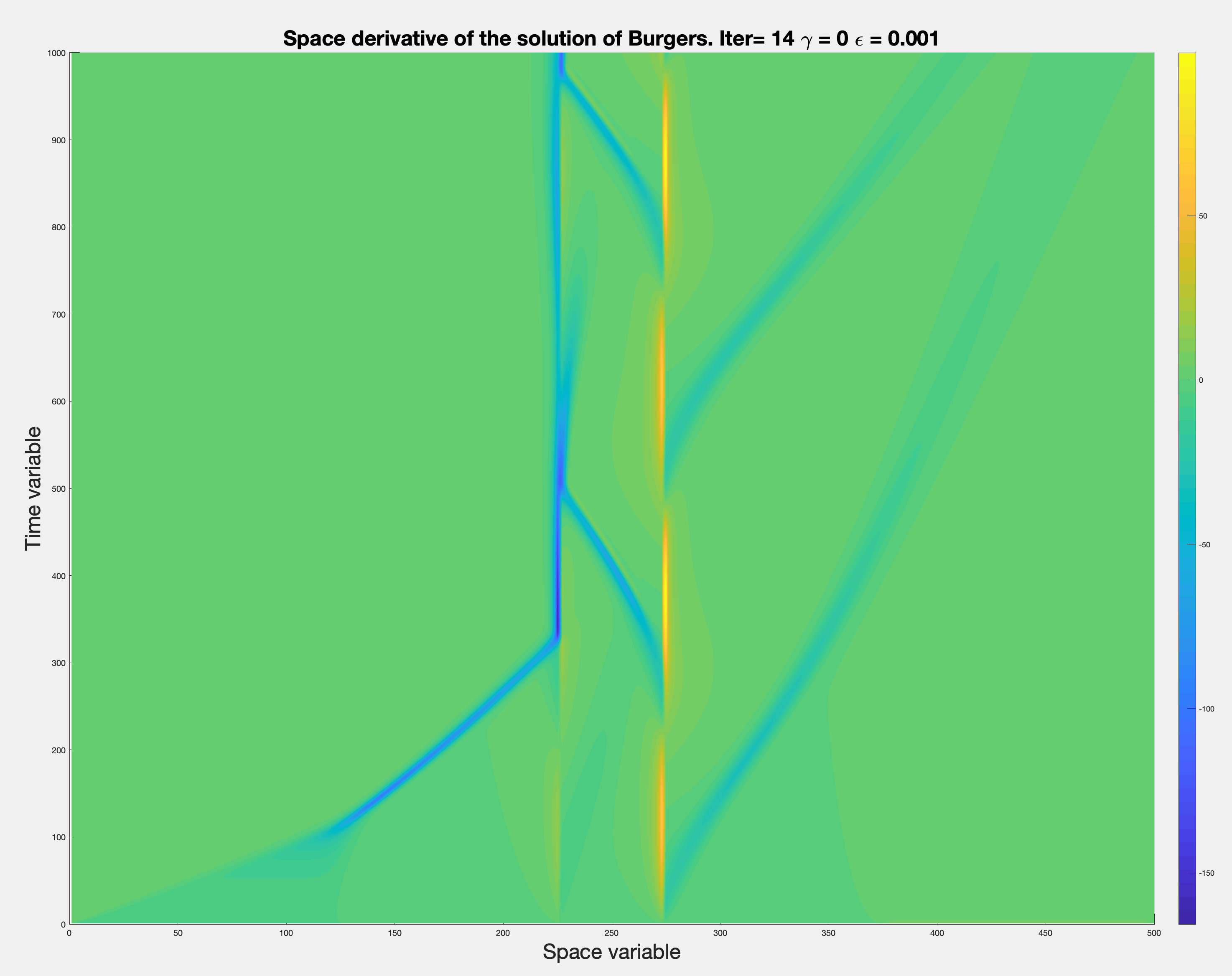}}
\caption{The solution with duality (Burgers) but no penalty term at the f-s interface. The data are still the same as in the linear case. One can see a meaningful difference with the case without duality. \label{fig204205}}
\end{figure}

%\begin{figure}[htbp] %  figure placement: here, top, bottom, or page
%   \centering
%   \includegraphics[width=11cm,height=8.5cm]{burgers4.png} 
%   \caption{The solution with duality (Burgers) but no penalty term at the f-s interface. The data are still the same as in the linear case. One can see a meaningful difference with the case without duality.}
%   \label{fig204}
%\end{figure}
%
%\begin{figure}[htbp] %  figure placement: here, top, bottom, or page
%   \centering
%   \includegraphics[width=11cm,height=9cm]{burgers5.png} 
%   \caption{Derivative versus $x$ for the Burgers solution with duality but no penalty term inside the structure.}
%   \label{fig205}
%\end{figure}
%
%
\begin{figure}[!htbp]
\subfigure[Normal stress at the f-s interface  \label{fig206}]{\includegraphics[width=5.5cm,height=5.5cm]{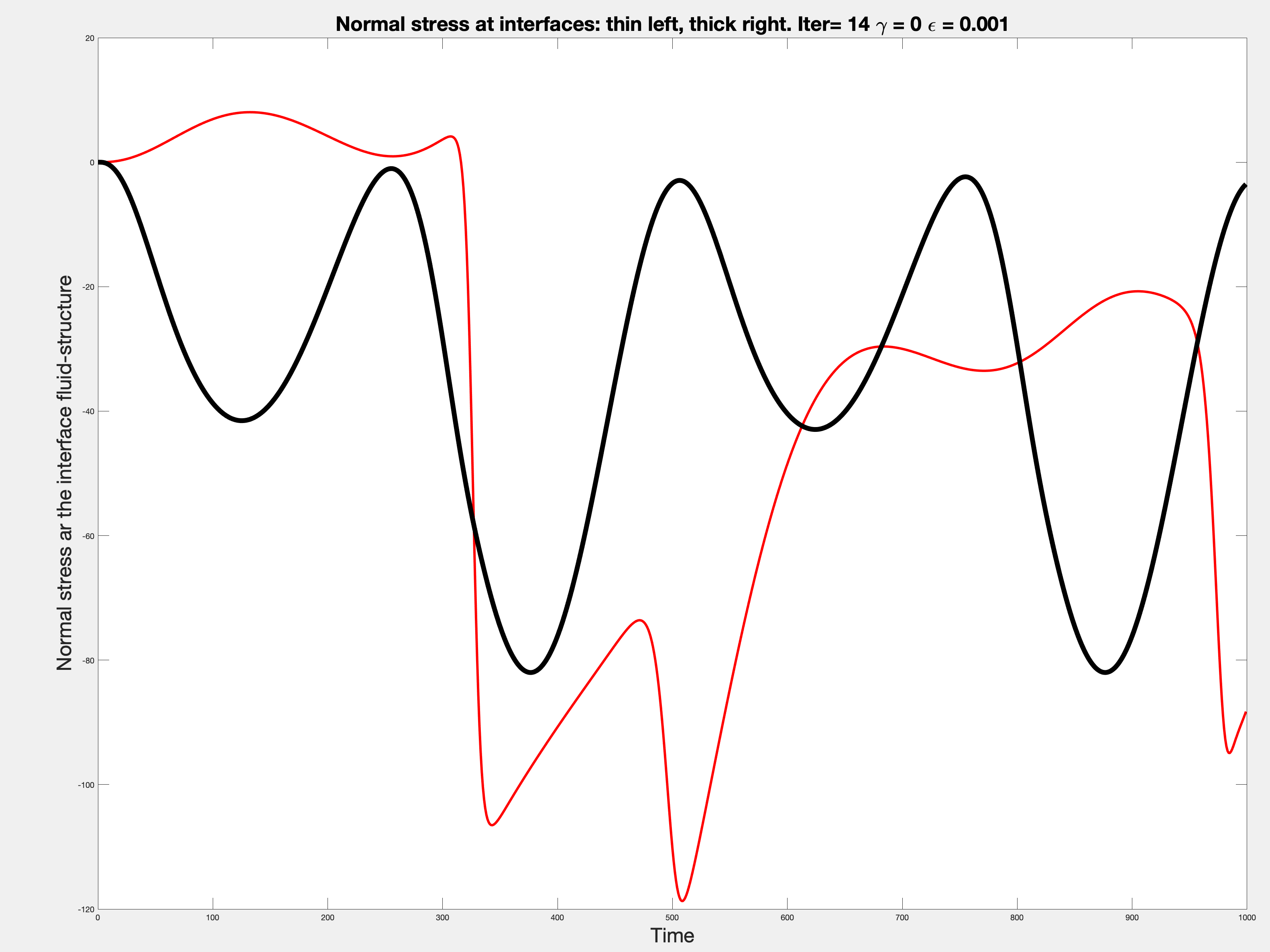}}
\subfigure[The Lagrange multipliers at the two f-s interfaces \label{fig207}]{\includegraphics[width=5.5cm,height=5.5cm]{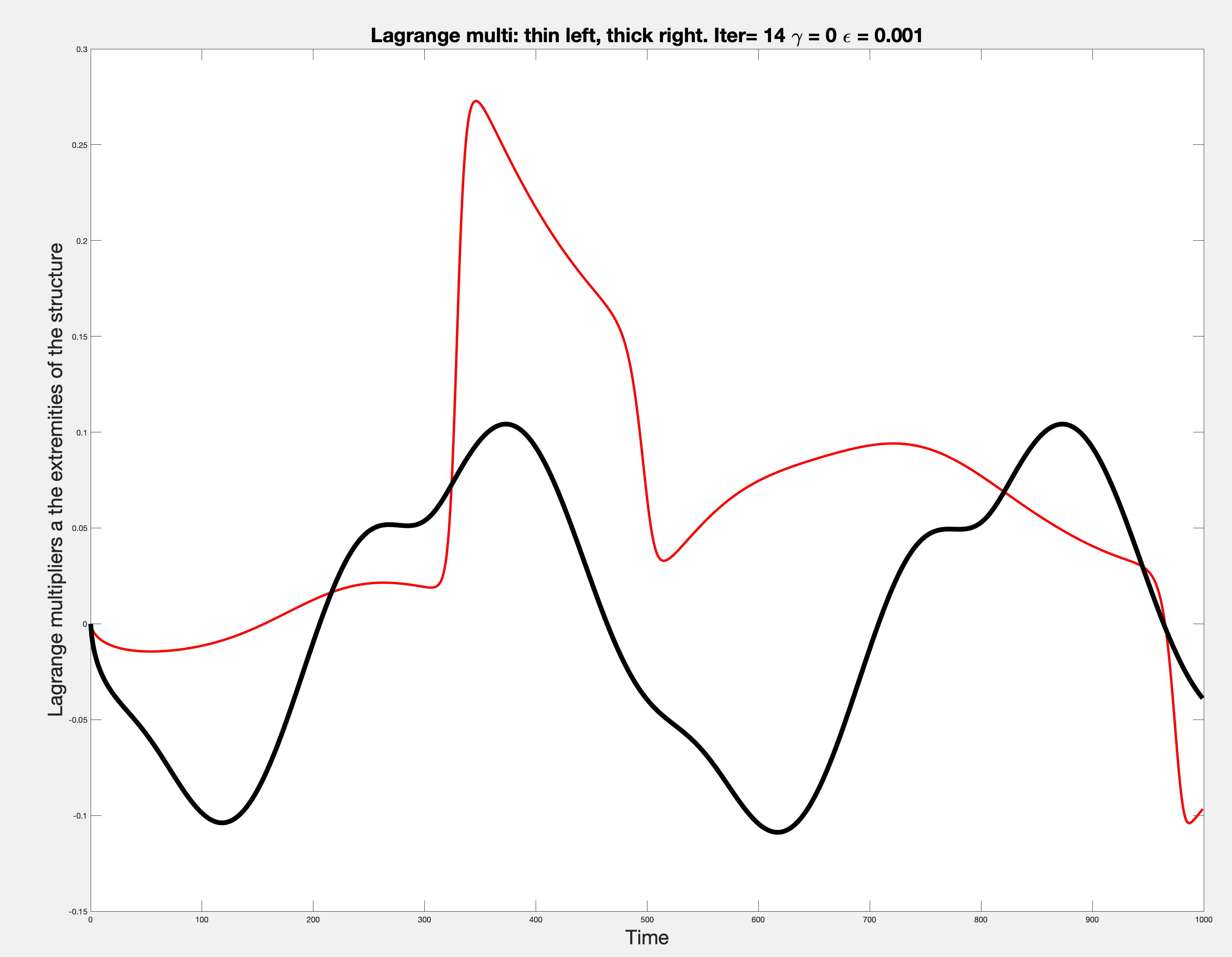}}
\caption{ Burgers solution obtained with duality but no penalty term inside the structure. \label{fig206207}}
\end{figure}
%
%\begin{figure}[htbp] %  figure placement: here, top, bottom, or page
%   \centering
%   \includegraphics[width=11cm,height=9cm]{burgers6.png} 
%   \caption{Normal stress at the f-s interface of Burgers solution obtained with duality but no penalty term inside the structure.}
%   \label{fig206}
%\end{figure}
%
%
%\begin{figure}[htbp] %  figure placement: here, top, bottom, or page
%   \centering
%   \includegraphics[width=11cm,height=9cm]{burgers7.png} 
%   \caption{The Lagrange multipliers at the two f-s interfaces (with duality but no penalty term inside the structure).}
%   \label{fig207}
%\end{figure}
%
%
\begin{figure}[!htbp]
\subfigure[solution \label{fig208}]{\includegraphics[width=5.5cm,height=5.5cm]{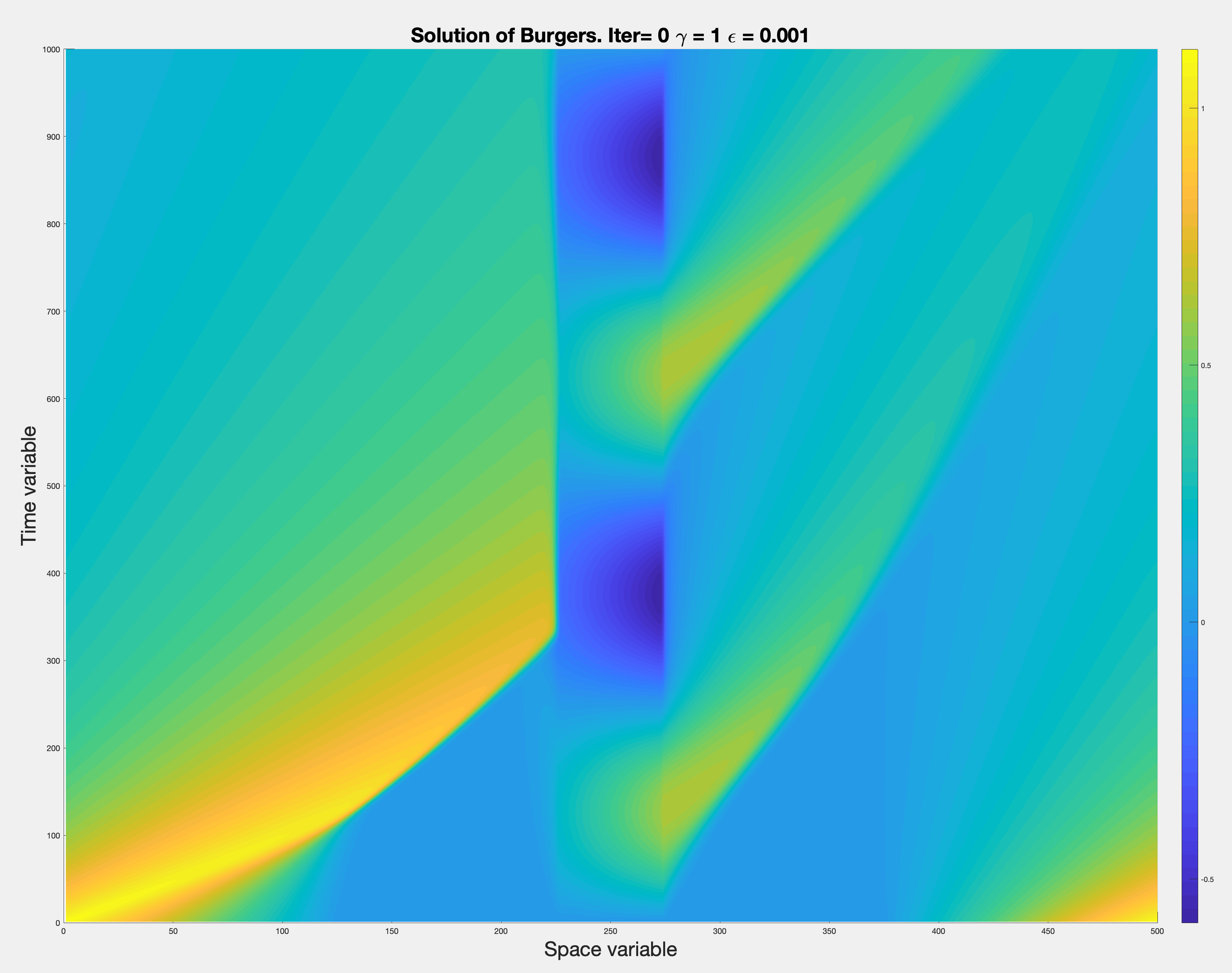}}
\subfigure[Derivative versus $x$ of Burgers solution \label{fig209}]{\includegraphics[width=5.5cm,height=5.5cm]{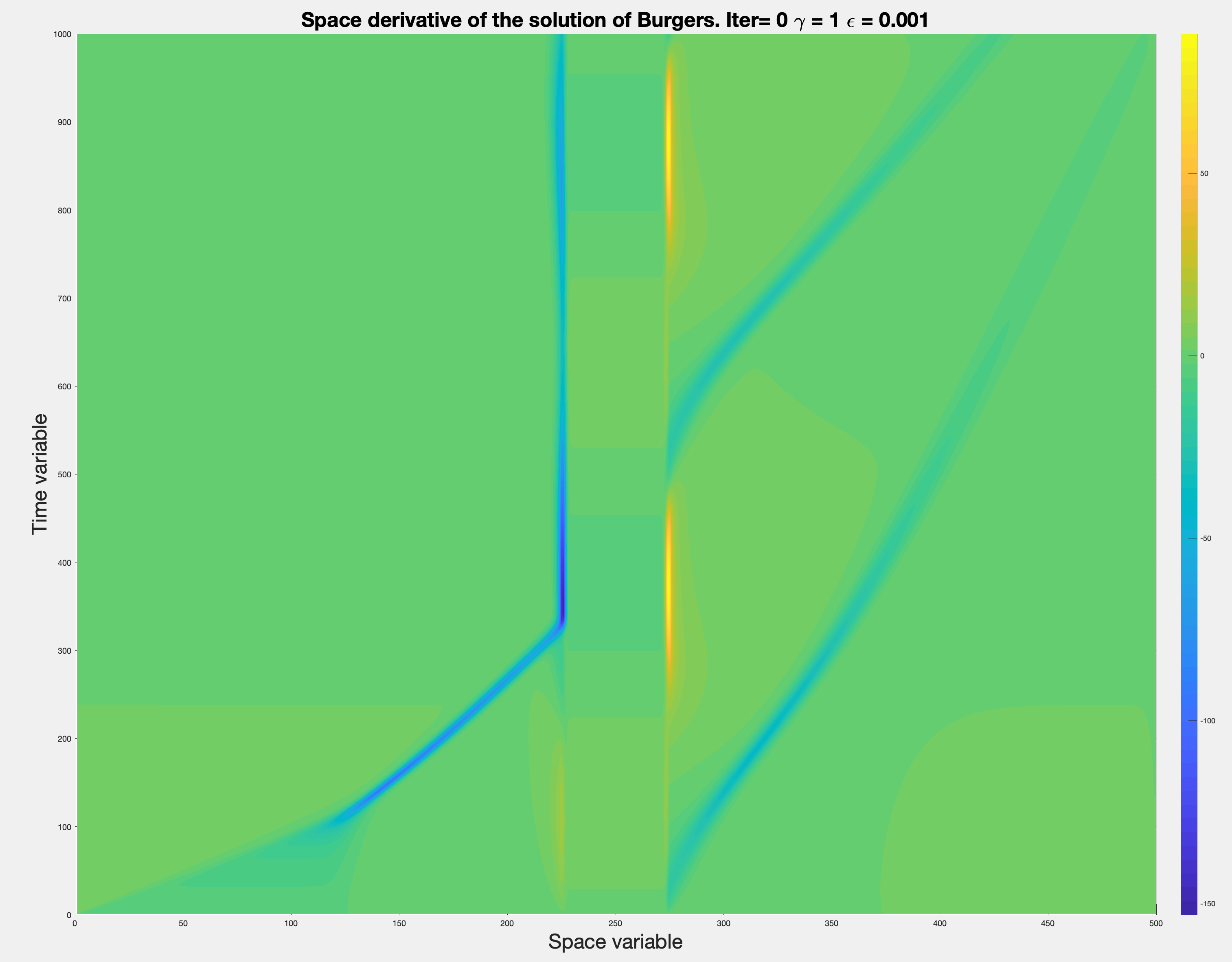}}
\caption{Burgers solution obtained without duality but a penalty term inside the structure ($L^2$). \label{fig208209}}
\end{figure}

%\begin{figure}[htbp] %  figure placement: here, top, bottom, or page
%   \centering
%   \includegraphics[width=11cm,height=9cm]{burgers8.png} 
%   \caption{Burgers solution obtained without duality but a penalty term inside the structure ($L^2$).}
%   \label{fig208}
%\end{figure}
%
%
%\begin{figure}[htbp] %  figure placement: here, top, bottom, or page
%   \centering
%   \includegraphics[width=11cm,height=9cm]{burgers9.png} 
%   \caption{Derivative versus $x$ of Burgers solution obtained without duality but a penalty term inside the structure.}
%   \label{fig209}
%\end{figure}
%
%
\begin{figure}[htbp] %  figure placement: here, top, bottom, or page
   \centering
   \includegraphics[width=11cm,height=9cm]{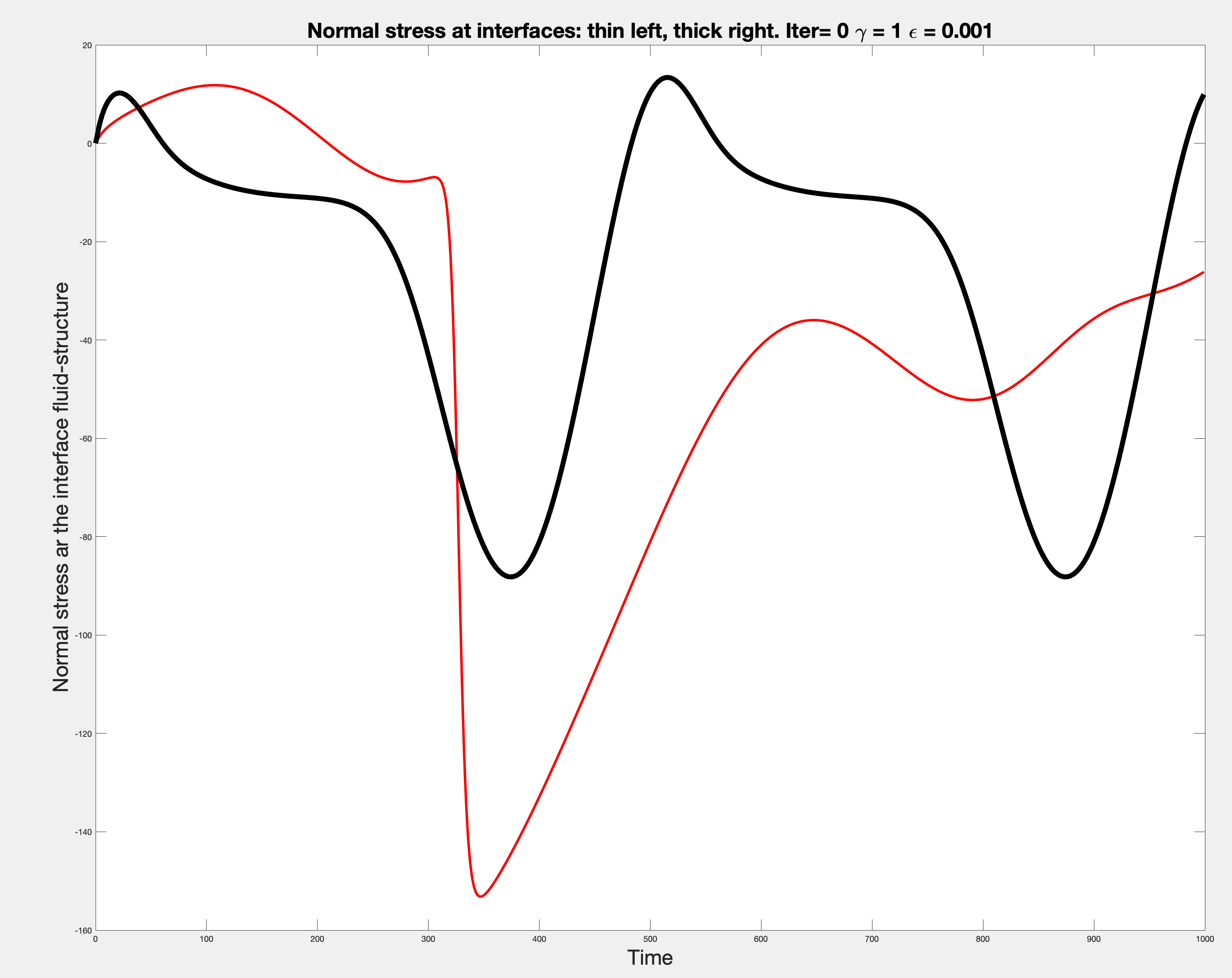} 
   \caption{Normal stress at the f-s interface (no duality but a penalty term inside the structure.}
   \label{fig210}
\end{figure}
\begin{figure}[!htbp]
\subfigure[solution \label{fig211}]{\includegraphics[width=5.5cm,height=5.5cm]{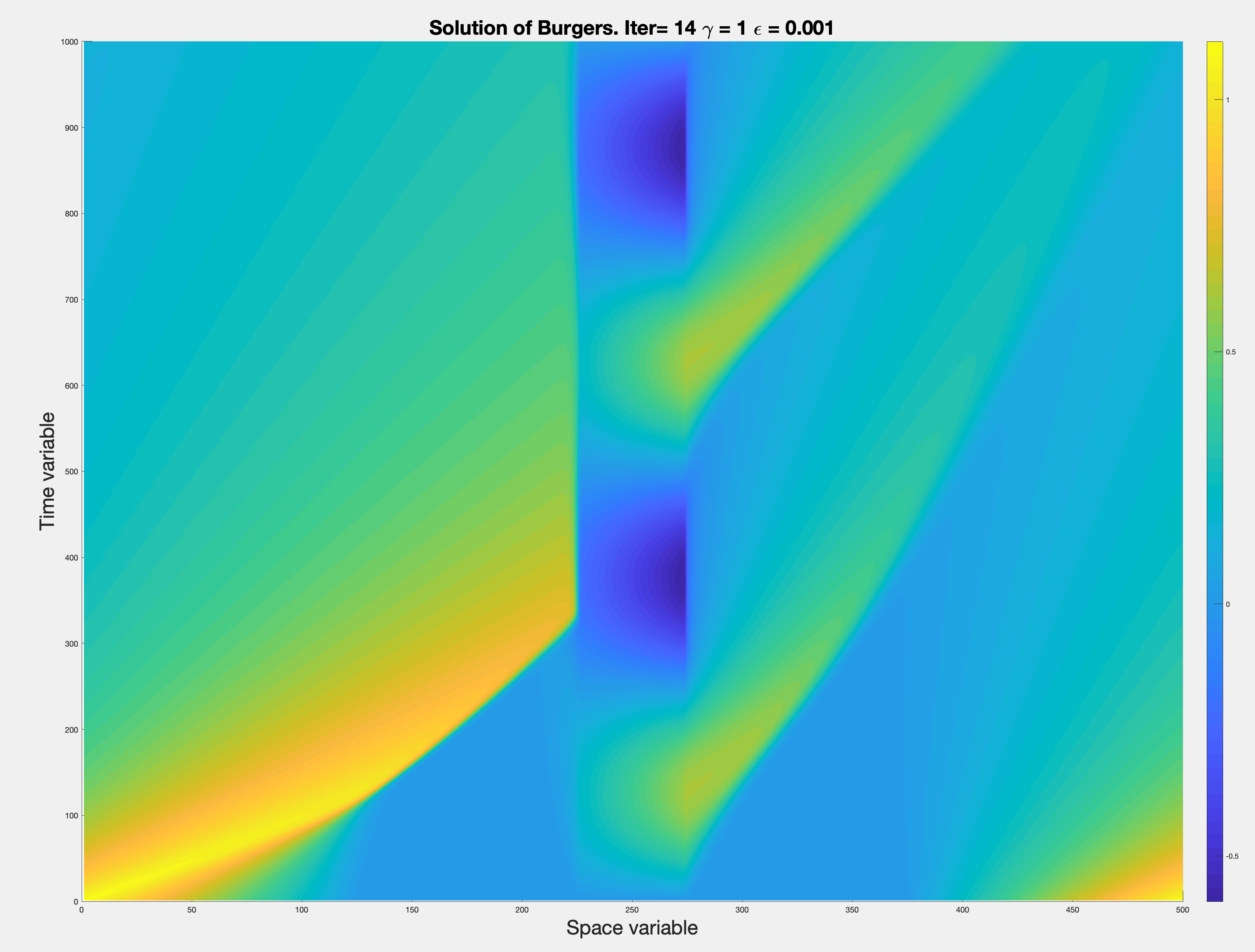}}
\subfigure[Derivative versus $x$ of Burgers solution \label{fig212}]{\includegraphics[width=5.5cm,height=5.5cm]{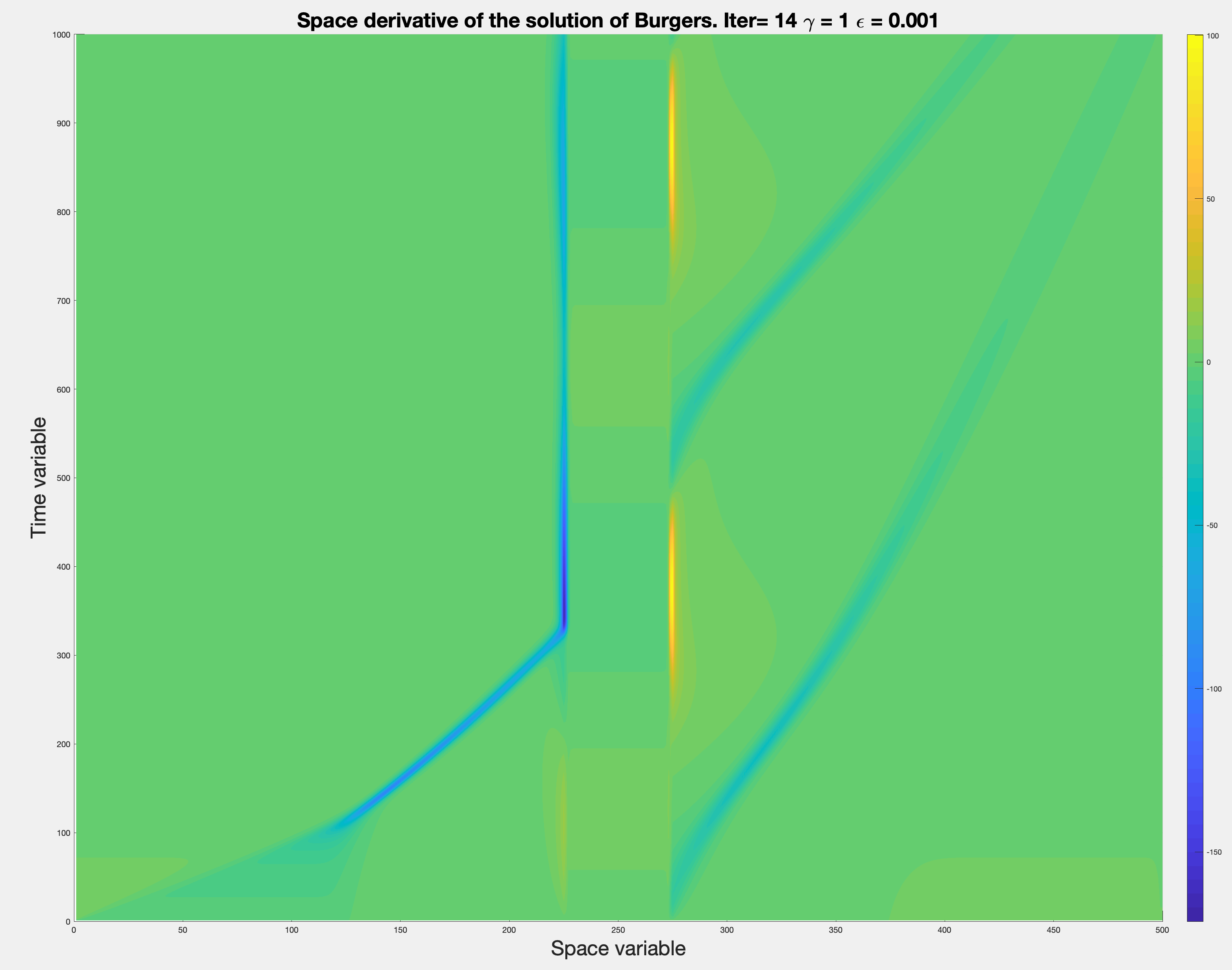}}
\caption{Burgers solution obtained with duality and a penalty term inside the structure. \label{fig211212}}
\end{figure}
%
%\begin{figure}[htbp] %  figure placement: here, top, bottom, or page
%   \centering
%   \includegraphics[width=11cm,height=9cm]{burgers11.png} 
%   \caption{Burgers solution obtained with duality and a penalty term inside the structure.}
%   \label{fig211}
%\end{figure}
%
%\begin{figure}[htbp] %  figure placement: here, top, bottom, or page
%   \centering
%   \includegraphics[width=11cm,height=9cm]{burgers12.png} 
%   \caption{Derivative versus $x$ of Burgers solution obtained with duality and a penalty term inside the structure.}
%   \label{fig212}
%\end{figure}
%
\begin{figure}[!htbp]
\subfigure[Normal stress at the f-s interface \label{fig213}]{\includegraphics[width=5.5cm,height=5.5cm]{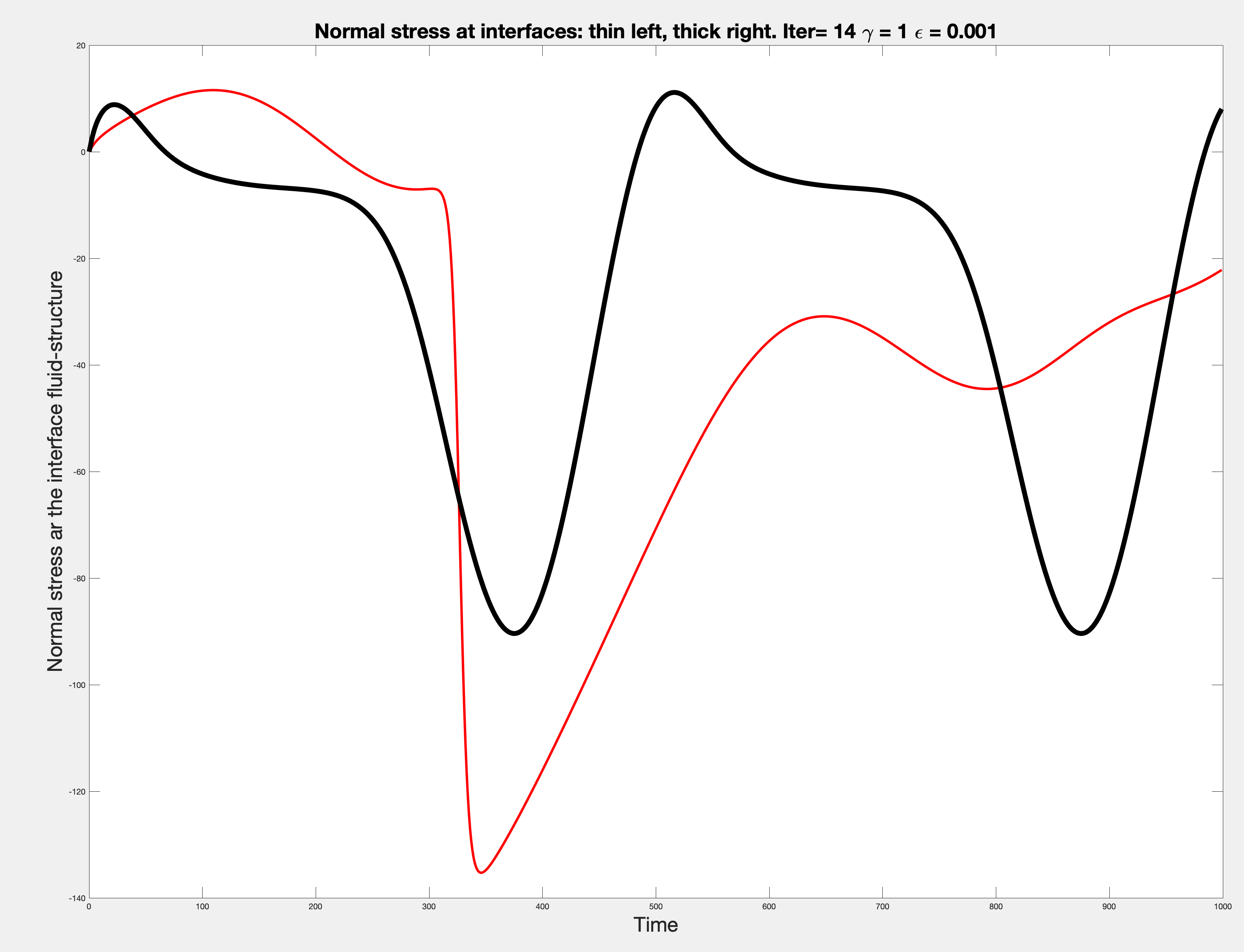}}
\subfigure[Lagrange multipliers at the f-s interfaces  \label{fig214}]{\includegraphics[width=5.5cm,height=5.5cm]{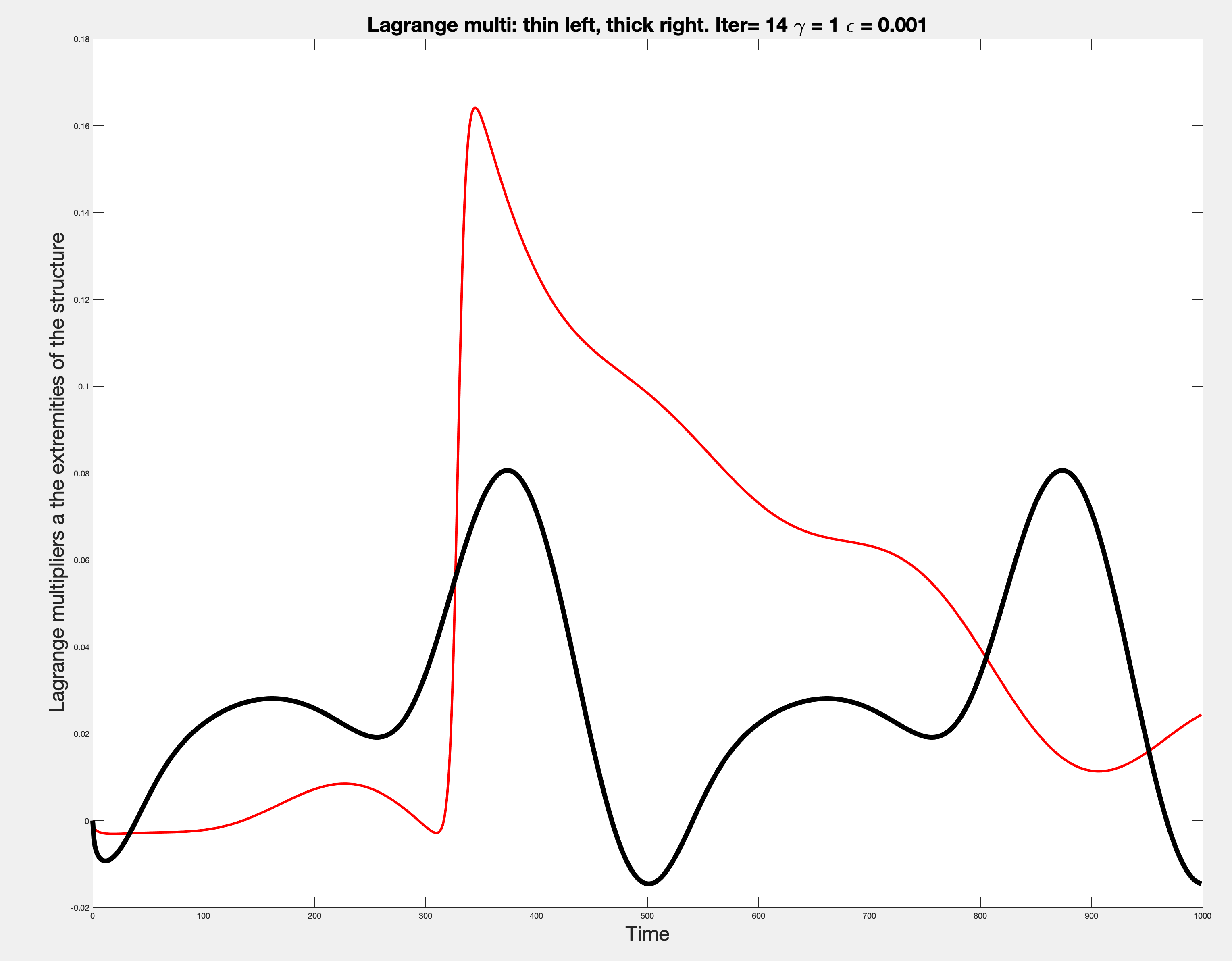}}
\caption{Normal stress and Lagrange multipliers at the f-s interface with duality and a penalty term inside the structure. \label{fig213214}}
\end{figure}
%
%\begin{figure}[htbp] %  figure placement: here, top, bottom, or page
%   \centering
%   \includegraphics[width=11cm,height=9cm]{burgers13.png} 
%   \caption{Normal stress at the f-s interface with duality and a penalty term inside the structure.}
%   \label{fig213}
%\end{figure}
%
%\begin{figure}[htbp] %  figure placement: here, top, bottom, or page
%   \centering
%   \includegraphics[width=11cm,height=9cm]{burgers14.png} 
%   \caption{Lagrange multipliers at the f-s interfaces  (duality and  penalty term inside the structure).}
%   \label{fig214}
%\end{figure}
%
%
\begin{figure}[!htbp]
\subfigure[Reference solution \label{fig215}]{\includegraphics[width=5.5cm,height=5.5cm]{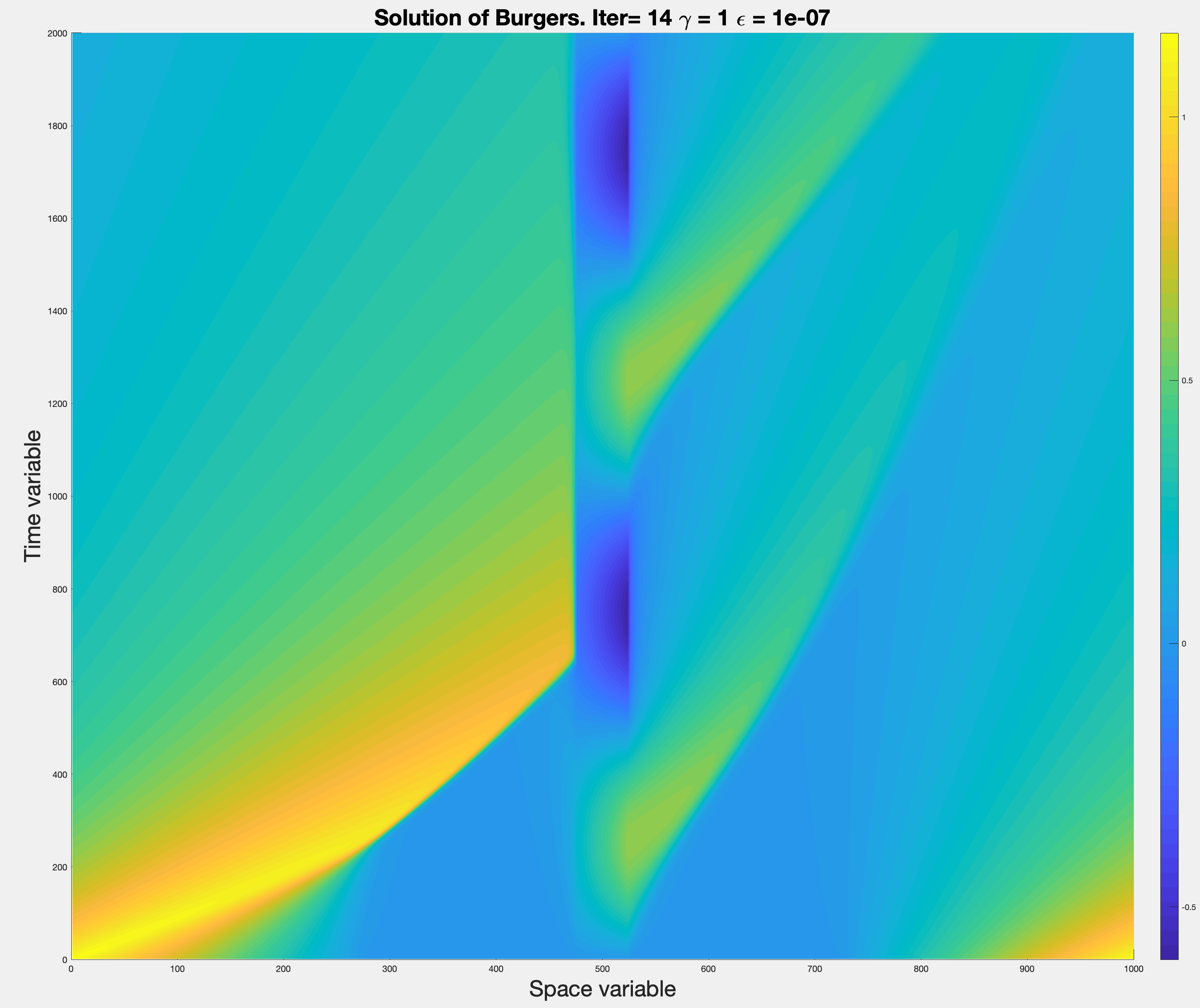} }
\subfigure[Derivative versus $x$ of the reference solution \label{fig216}]{\includegraphics[width=5.5cm,height=5.5cm]{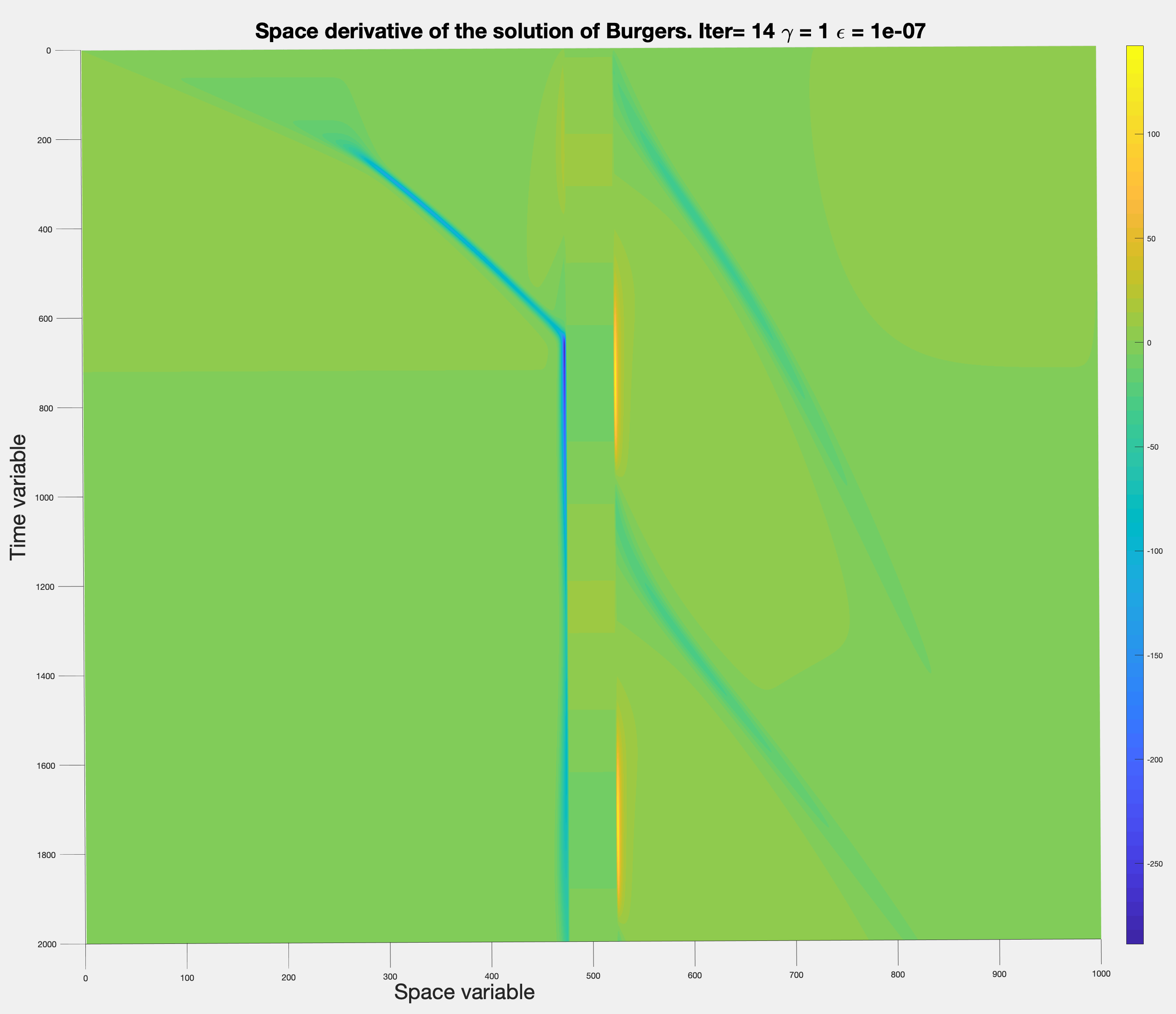} }
\caption{Reference solution of Burgers equation (duality + penalty term inside the structure + refined mesh + time step) \label{fig215216}}
\end{figure}
%
%\begin{figure}[htbp] %  figure placement: here, top, bottom, or page
%   \centering
%   \includegraphics[width=11cm,height=9cm]{burgers15.png} 
%   \caption{Reference solution of Burgers equation (duality + penalty term inside the structure + refined mesh + time step).}
%   \label{fig215}
%\end{figure}
%
%\begin{figure}[htbp] %  figure placement: here, top, bottom, or page
%   \centering
%   \includegraphics[width=11cm,height=8.5cm]{burgers16.png} 
%   \caption{Derivative versus $x$ of the reference solution (Figure \ref{fig215}) duality and a penalty term inside the structure.}
%   \label{fig216}
%\end{figure}
%
\begin{figure}[!htbp]
\subfigure[Normal stress at the f-s interfaces \label{fig217}]{\includegraphics[width=5.5cm,height=5.5cm]{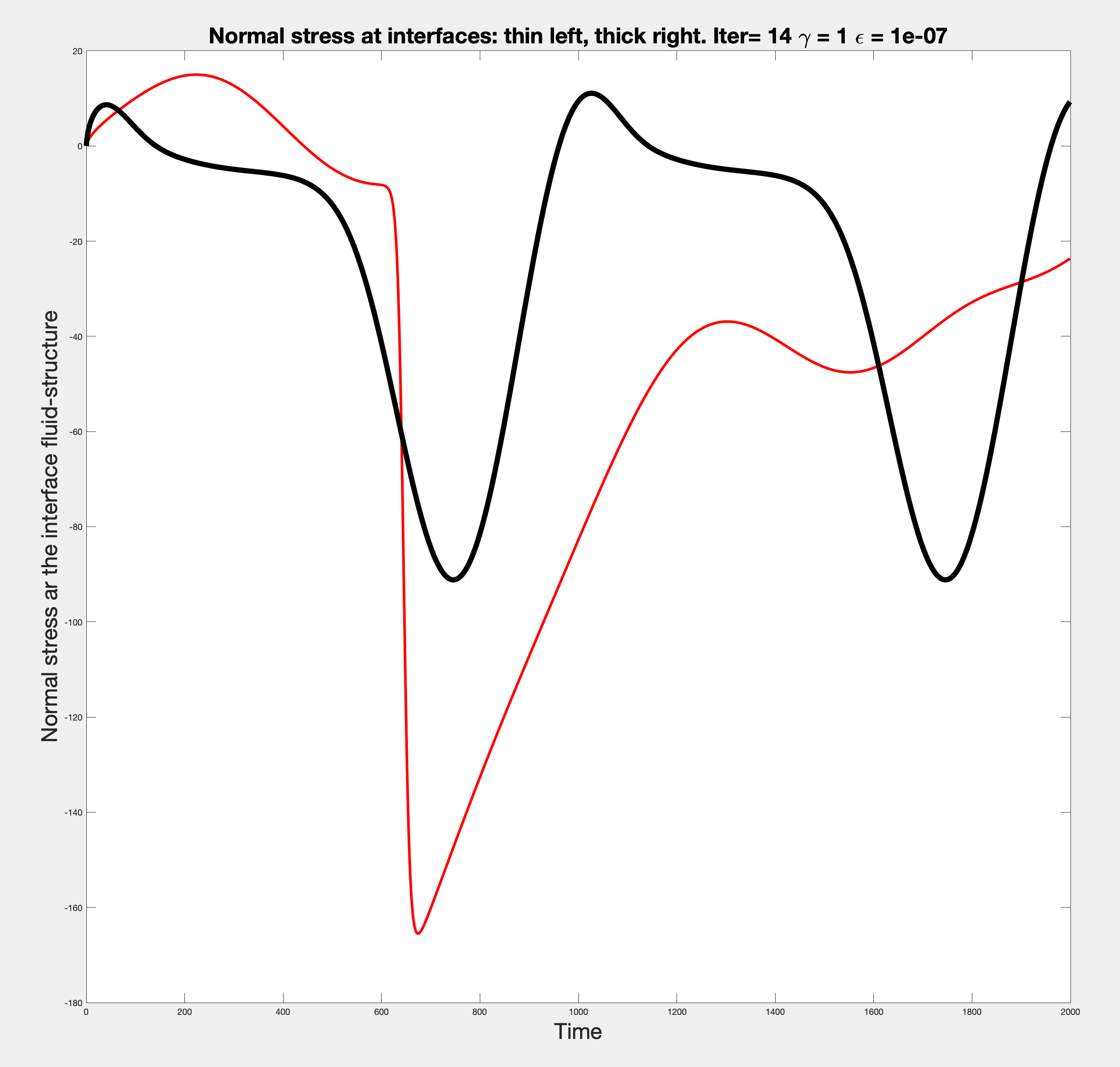} }
\subfigure[Lagrange multipliers at the f-s interfaces, (the sign of the Lagrange multiplier is opposite to the one of the normal stress in this case)\label{fig218}]{\includegraphics[width=5.5cm,height=5.5cm]{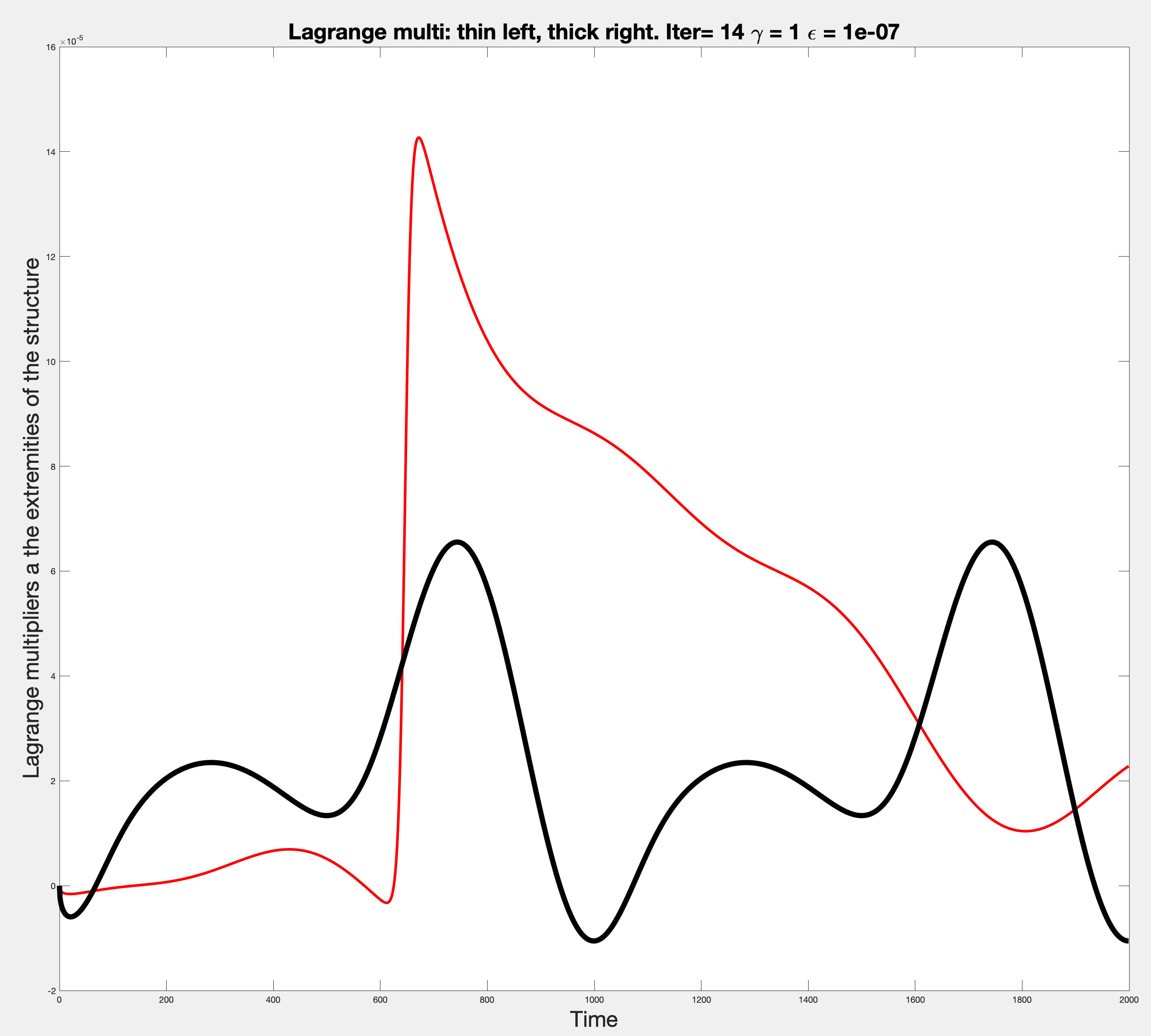} }
   \caption{Reference solution (Figure \ref{fig215}) obtained with duality and a penalty term inside the structure. \label{fig217218}}
 \end{figure}
%\begin{figure}[htbp] %  figure placement: here, top, bottom, or page
%   \centering
%   \includegraphics[width=11cm,height=8.5cm]{burgers17.png} 
%   \caption{Normal stress at the f-s interfaces  of the reference solution (Figure \ref{fig215}) obtained with duality and a penalty term inside the structure.}
%   \label{fig217}
%\end{figure}
%
%\begin{figure}[htbp] %  figure placement: here, top, bottom, or page
%   \centering
%   \includegraphics[width=11cm,height=8.5cm]{burgers17.png} 
%   \caption{Lagrange multipliers at the f-s interfaces  of the reference solution (Figure \ref{fig215}) obtained with duality and a penalty term inside the structure.}
%   \label{fig218}
%\end{figure}

%============
\section{Conclusion} In this paper we have first studied several penalty methods in a simple fluid-structure model assuming the case of a small reduce frequency in order to partially decouple the inertia terms between the fluid and the structure. We discussed the possibilities of several penalty terms inside the structure ($L^2$ or/and $H^1$ norms but also by penalizing the continuity of the velocities at the boundary between the fluid and the structure. Our goal has been to focus on the mathematical difficulties which can occur in the numerical schemes due to the ill conditioning of the penalty models.We compare the methods from the theoretical point of view and on a very simple 1D model. We also introduced in this framework the Bertsekas penalty-duality algorithm for the ensuring the velocity continuity at the fluid-structure interface. It appears that the penalty-duality algorithm applied to this quasi-static model without inertia terms is very efficient compared to other penalty methods. Furthermore it leads to a much better condition number of the numerical scheme as far as it not necessary to use a large value of the penalty coefficient. 

In a second step we compared the various possibilities discussed in the first step on two very simple 1D fluid-structure models. The first one is the advection-diffusion equation with a prescribed movement of an immersed structure and the second one is a similar model but with Burgers equation for the fluid. In this case there a convection term (linear for the advection and non linear for Burgers equation). The numerical tests show that the $L^2$ penalty term inside the structures is very efficient as far as the penalty parameter is very small. The coupling with the penalty-duality algorithm involving the velocity continuity at the interface enables to improve very slightly the numerical tests as far as the inner penalty parameter is very small. But it is much more efficient if this parameter is more moderate. This can be therefore an interesting improvement in cases where the condition numerber is an important point in the numerical scheme. One can forcast that this is mainly the case for for 2D and more for 3D models.

Because this paper is mainly theoretically oriented, we focused on the mathematical analysis of the penalty and penalty-duality methods for Stokes equations coupled with a rigid structure in movement. The tests are just an illustration in order to eillustrate our purpose and to point out the limits of our conclusions.  For a more physical study we refer to A. Falaise and E. Liberge \cite{elaf}.

Nevertheless, an embedding in a global Euler-Lagrange representation is a classical solution in order to be able to solve a fully time dependent model where the structure (assumed to be rigid for sake of simplicity) is moving inside the fluid with large displacements due to the forces applied on the structure by the fluid. This aspect will be discussed in a forthcoming study \cite {PHDEL2} where we use the so-called $\theta$-method \cite{PHDAER} for transferring informations between the fluid model and the one of the structure which can be flexible and moving with large displacements.

\end{document}